\newcommand{\1}[1]{{\mathbf 1}{\{#1\}}}
\newcommand{\eps}{\varepsilon} 
\newcommand{\Z}{{\mathbb Z}}
\newcommand{\C}{{\mathbb C}}
\newcommand{\B}{{\mathsf B}}
\newcommand{\rad}{{\mathop{\mathrm{rad}}}}
\newcommand{\am}{{\mathfrak A}}
\newcommand{\cl}{{\mathfrak C}}
\newcommand{\Sph}{{\mathbb S}}
\newcommand{\V}{{\mathcal V}}
\newcommand{\LL}{{\mathcal L}}
\newcommand{\R}{{\mathbb R}}
\newcommand{\RR}{{\mathcal R}}
\newcommand{\tell}{{\tilde\ell}}
\newcommand{\BRI}{\mathop{\mathrm{BRI}}}
\let\phi=\varphi
\newcommand{\tW}{{\widetilde W}}
\newcommand{\tY}{{\widetilde Y}}
\newcommand{\eqlaw}{\stackrel{\text{\rm\tiny law}}{=}}
\newcommand{\IP}{{\mathbb P}}
\newcommand{\IE}{{\mathbb E}}
\newcommand{\distTV}{\mathop{dist}\nolimits_{\mathit{TV}}}
\DeclareMathSymbol{\widehatsym}{\mathord}{largesymbols}{"62}
\newcommand{\hW}{\widehat{W}}
\newcommand{\capa}{\mathop{\mathrm{cap}}}
\newcommand{\hcapa}{\mathop{\widehat{\mathrm{cap}}}}
\newcommand{\hm}{\mathop{\mathrm{hm}}\nolimits}
\newcommand{\htau}{\widehat{\tau}}
\newtheorem{theo}{Theorem}[section]
\newtheorem{lem}[theo]{Lemma}
\newtheorem{df}[theo]{Definition}
\newtheorem{prop}[theo]{Proposition}
\newtheorem{cor}[theo]{Corollary}
\title{% Please, capitalize only the first word
   On the shape of the connected components
of the complement 
of two-dimensional Brownian random interlacements
    }
\author{% Please, use "Firstname Lastname" format, without abreviations
    Orph\'ee Collin and Serguei Popov
    }
\abstract{%
    We study 
the limiting shape of the connected components
 of the vacant set of
two-dimensional Brownian random interlacements: 
we prove that the connected component around~$x$
is close in distribution to
a rescaled \emph{Brownian amoeba} in the regime
when the distance from~$x\in\C$ to the closest trajectory
is small (which, in particular, includes the cases
 $x\to\infty$ with fixed intensity parameter~$\alpha$, 
 and $\alpha\to\infty$ with fixed~$x$).
We also obtain a new family of martingales
built on the conditioned Brownian motion,
which may be of independent interest.
    }
\keywords{% 2-5 keywords
    conditioned trajectories, Doob's $h$-transform,  Brownian amoeba.
    }
\begin{document}

% Here is where the main text should be typed:

\section{Introduction}
\label{s_intro}
In this paper, we study the model of Brownian random interlacements
in two dimensions. The discrete random interlacements
in higher dimensions are ``canonical
Poissonian soups'' of doubly infinite
simple random walk's trajectories
(or ``loops passing through infinity'');
they were introduced
by Sznitman in~\cite{S10} and then studied quite extensively (see, e.g.~\cite{DRS14}). That original construction
cannot be used in low dimensions where a simple random walk's trajectory
is recurrent so that even a single trajectory
will a.s. fill the whole space;
one can, however, construct a similar object in a natural
way using the \emph{conditioned} (on avoiding the origin) trajectories.
This was done in~\cite{CPV16} in two dimensions and in~\cite{CDarcyP18}
in one dimension. In all dimensions, random interlacements are related to (simple) random walks
on tori: the traces left by the random walk up to a
certain time on a box (with a size much smaller than that of the torus) can be well approximated by the random
interlacements restricted to that box (in dimensions~$1$ and~$2$ one also needs to condition the center of the
box to be not visited by the walk).
 
It is also natural to consider a continuous space/time counterpart
of the above-mentioned models:
namely, the ``interlacement soup''
made of Brownian trajectories
(which, in the case of lower dimensions,
also needs some conditioning). This was done in~\cite{Szn13}
for the original random interlacement model and in~\cite{CP20}
for the two-dimensional case. 

One of the main objects of interest in the context of random interlacements
is \emph{the vacant set}, which is the complement
of the union of the trajectories that make up
the interlacements. 
This article is part of a program focused on studying the geometric properties of the vacant set
of the two-dimensional random interlacements
(discrete and continuous), as well as the vacant
set of a single-conditioned trajectory.
Note that in the two-dimensional case,
the vacant set of Brownian interlacements
only has bounded connected
components; the paper~\cite{CP20} contains some
results mainly related to the (linear) size
of these components. In particular, it was shown there
(see also~\cite{CP24}) that the vacant set
contains an infinite number of nonoverlapping disks
of constant radius whenever $\alpha\leq 1$,
where~$\alpha$ is the intensity parameter of the model.
In addition, the radius of the largest 
disk around~$x$, which is fully contained in the vacant
set is $\exp(-Z\alpha\ln^2|x|)$, where~$Z$ is 
a random variable with some (explicit)
distribution.
In Theorem~\ref{t_conv_Poisson} below, we extend
that last result by studying not only the distance
to the closest trajectory but also
to the second closest one, and so on,
but the main focus of this paper is to
study the geometric shapes of the connected
components of the vacant set. We mention 
that the connected components of the complement
of \emph{one} The Brownian trajectory was studied
in a number of papers, notably (in chronological order)
\cite{M89,LG91,Wer94,HNPS19}; in particular,
the limiting shape was the main subject of~\cite{Wer94}.
It is interesting to note that, despite the fact that we are dealing
with many trajectories here, in some regimes
it is still typically only one
trajectory that defines the shape
of a given connected component, 
cf.\ Theorem~\ref{t_amoeba} below.
Then, in Theorem~\ref{t_central_cell} we also consider
a situation when the shape of the connected component
is typically determined by many trajectories:
in the limit~$\alpha\to\infty$, 
we prove some geometric properties of the ``central
cell'' (that is, the connected component which contains
the unit disk centered at the origin, which the trajectories are conditioned not to touch).

Another contribution of this paper we have to mention
is the following: in Section~\ref{s_further_cond_BM}, 
we present a family of functions
which, when applied to the conditioned Brownian
motion, result in (local) martingales
(see Proposition~\ref{p_L_properties} below).
That can be seen as a continuous counterpart 
of Proposition~2.4 of~\cite{P21}
or Proposition~4.10 of~\cite{P21_book},
where the corresponding families for the simple conditioned two-dimensional random walk
were discussed. It is generally very useful
to be able to construct such martingales since it allows, e.g., estimating hitting probabilities
of various sets via the optional stopping theorem;
therefore, we hope that Proposition~\ref{p_L_properties}
will find its further applications.

% This paper is dedicated to the memory 
% of Francis Comets,
% a great friend and colleague, 
% whose ideas and contributions
% keep playing an important role
% in our field of research. 

\section{Formal definitions and results}
\label{s_def_results}
In the following, we will identify~$\R^2$ and~$\C$ 
via $x=(x_1,x_2)=x_1+i x_2$,
$|\cdot|$ will denote the Euclidean norm
in~$\R^2$ 
% or $\Z^2$ 
as well as the modulus in~$\C$.
Also, let $\B(x,r)=\{y: |x-y|\leq r\}$ be the
closed disk of radius~$r$ centered in~$x$,
and abbreviate $\B(r):=\B(0,r)$. 
% We will use the standard O-notations.....

Let~$W$ be the standard two-dimensional Brownian motion. 
The main ingredient of Brownian random interlacement is
the Brownian motion is conditioned on not hitting
the unit disk~$\B(1)$,
which will be denoted by~$\hW$; 
formally, it is the Doob's $h$-transform 
of~$W$ with respect to $h(x)=\ln|x|$.
The process~$\hW$ can be formally defined 
via its transition kernel~$\hat p$: 
for  $|x|>1, |y|\geq 1$, 
\begin{equation}
\label{df_hat_p}
 {\hat p}(t,x,y) = p_0(t,x,y)\frac{\ln|y|}{\ln|x|} .
\end{equation}
where~$p_0$ denotes
the transition subprobability density of~$W$ killed on hitting
 the unit disk~$\B(1)$. 
It is possible to show (see~\cite{CP20}) that the diffusion~$\hW$
obeys the stochastic differential equation
\begin{equation}
\label{differential_W}
 d\hW_t = \frac{\hW_t}{|\hW_t|^2\ln |\hW_t|} dt
    +dW_t.
\end{equation}
Sometimes, it can be useful to work with an alternative definition of 
the diffusion~$\hW$ using polar coordinates
$\hW_t = (\RR_t\cos \Theta_t, \RR_t\sin \Theta_t)$. With~$W^{(1,2)}$
two independent standard one-dimensional 
Brownian motions,
let us consider the stochastic differential equations
\begin{align}
 d \RR_t &= \Big(\frac{1}{\RR_t\ln \RR_t} + \frac{1}{2\RR_t}\Big)dt 
 + dW_t^{(1)},
   \label{df_Rt} \\ 
% \intertext{and}    
\label{df_Theta_t}
d \Theta_t &=  \frac{1}{\RR_t} dW_t^{(2)}
\end{align}
%\begin{equation}
%\left\{
%\begin{array}{ll} \vspace{1em}
% d \RR_t &= \Big(\frac{\displaystyle 1}{\displaystyle \RR_t\ln \RR_t} + \frac{\displaystyle 1}{\displaystyle 2\RR_t}\Big)dt + dW_t^{(1)},
% \label{df_Rt}\\ 
%d \Theta_t &=  \frac{\displaystyle 1}{\displaystyle \RR_t} dW_t^{(2)}
%\label{df_Theta_t}
%\end{array}
%\right.
%\end{equation}
(note that the diffusion~$\Theta$ takes values in the whole~$\R$,
so we are considering
a Brownian motion on the Riemann surface);
it is an elementary exercise in stochastic calculus to show
that~\eqref{differential_W} is equivalent
to~\eqref{df_Rt}--\eqref{df_Theta_t}. 
% In other words, we regard the complex-valued Brownian motion
% as a projection of the Brownian motion on the Riemann surface.
We also note that, as shown in~\cite{CP20}, even though the radial
drift component in the above stochastic differential equations is not
well defined $\partial\B(1)$, it is still possible to define the 
process~$\hW$ starting at~$\partial\B(1)$ and staying outside
the unit disk for all $t>0$.

Let us define
\begin{align*}
 \tau(x,r) &= \inf\big\{t>0: W_t\in\partial\B(x,r)\big\},\\
 \htau(x,r) &= \inf\big\{t>0: \hW_t\in\partial\B(x,r)\big\}
\end{align*}
to be the hitting times of the boundary of the disk~$\B(x,r)$ 
with respect to the (two-dimensional)
Brownian motion and its conditioned version;
we abbreviate $\tau(r):=\tau(0,r)$ and 
$\htau(r):=\htau(0,r)$.
% (to also consider it ``from inside'').

% Recall that \emph{Wiener moustache}
% (Definition~2.4 of~\cite{CP20}) is the union of two
% independent conditioned Brownian motions started at 
% randomly (with uniform distribution) chosen point 
% of~$\partial\B(1)$.
Recall the definition of 
\emph{Wiener moustache} (Definition~2.4
of~\cite{CP20}):
\begin{df}
\label{df_moustache}
 Let~$U$ be a random variable with a uniform distribution in $[0,2\pi]$, and let
$(\RR^{(1,2)},\Theta^{(1,2)})$
be two independent copies of the processes
defined by \eqref{df_Rt}--\eqref{df_Theta_t}, 
with a common initial point~$(1,U)$.
Then, the \emph{Wiener moustache}~$\eta$ 
is defined as the union of ranges of the two trajectories, that is,
\[
\eta = \big\{re^{i\theta}: \text{ there exist }k\in\{1,2\}, t\geq 0
 \text{ such that } \RR^{(k)}_t=r, \Theta^{(k)}_t=\theta\big\}. 
\]
% When there is no risk of confusion, 
% we also call the Wiener moustache
% the image of the above object under 
% the map $(r,\theta)\mapsto (r\cos\theta,
% r\sin \theta)$
% (see below).
\end{df}

We also need to recall
the definition of capacity for subsets of~$\R^2$. 
Let~$A$ be a compact subset of~$\R^2$
such that $\B(1)\subset A$. 
Denote by~$\hm_A$ the \emph{harmonic measure} 
(from infinity) on~$A$, that is, the entrance law in~$A$ for the Brownian motion starting from infinity (cf.\ e.g.\ Theorem~3.46 of~\cite{MP10}). 
We define the (Brownian) 
% logarithmic 
capacity of~$A$ as
\begin{equation}
\label{df_Br_cap}
 \capa(A) = \frac{2}{\pi}\int_A \ln|y|\, d\hm_A(y) .
\end{equation}
Also, for \emph{any} compact subset~$A$ of~$\R^2$, 
we define $\hcapa(A):=\capa\big(\B(1)\cup A\big)$.

Now, we recall the definition of two-dimensional
Brownian random interlacements
\cite{CP20}:
% give an ``explicit description'' of the model.
\begin{df}
\label{df_BRI}
 Let~$\alpha>0$ and consider a Poisson point process
$(\rho_k^\alpha, k\in\Z)$ on~$\R_+$ 
with intensity~$r(\rho)=\frac{2\alpha}{\rho}, 
\rho\in\R_+$.
Let $(\eta_k, k\in\Z)$ be a sequence of i.i.d.\ Wiener moustaches.
Fix $b\geq 0$.
Then, the model of Brownian Random Interlacements (BRI) 
on level~$\alpha$
 truncated at~$b$ 
is defined in the following way:
\begin{equation}
\label{eq_BRI}
\BRI(\alpha;b) = \bigcup_{k:\rho_k^\alpha \geq b}\rho_k^\alpha \eta_k\,.
\end{equation}
We also define the \emph{vacant set}~$\V^{\alpha;b}$: it is
the set of points of the plane that do not
belong to trajectories of $\BRI(\alpha;b)$
\[
\V^{\alpha;b} = \R^2\setminus \BRI(\alpha;b).
\]
\end{df}
Let us abbreviate $\BRI(\alpha):=\BRI(\alpha;1)$
and $\V^{\alpha}:=\V^{\alpha;1}$.
% As shown on Figure~\ref{f_BRI_def}
% on the left, 
% Observe that
% the Poisson process with 
% rate~$r(\rho)=\frac{2\alpha}{\rho}$
% can be obtained from a two-dimensional Poisson point process
% with rate~$1$ in the first quadrant, 
% by projecting onto the horizontal axis those points which lie below
% $r(\rho)$. Since the area under~$r(\rho)$
% is infinite in the neighborhoods of~$0$ and~$\infty$, there 
% is a.s.\ an infinite number of points of the Poisson
% process in both $(0,\eps)$ and~$(M,\infty)$ for all 
% positive~$\eps$ and~$M$. 
It is a characteristic property of Brownian random interlacements
that (recall Proposition~2.11 of~\cite{CP20})
\begin{equation}
\label{eq_vacant_Bro}
 \IP\big[A\cap\BRI(\alpha)=\emptyset\big]
    =  \exp\big(-\pi\alpha \hcapa(A)\big).
\end{equation}

An important observation is that the above Poisson process
is the image of a homogeneous 
 Poisson process of rate~$1$ in~$\R$ under the map 
$x\mapsto e^{x/2\alpha}$;
% (or, equivalently, the image of a homogeneous 
%  Poisson process of rate~$2\alpha$ %in~$\R$
%  under the map 
% $x\mapsto e^{x}$); 
this follows from the mapping theorem
for Poisson processes (see e.g.\ Section~2.3 of~\cite{K93}).
Because of that, we may write
\begin{equation}
\label{rho_exponential}
 \rho_k^\alpha = \exp\Big(\frac{Y_1+\cdots+Y_k}{2\alpha}\Big),
\end{equation}
where $Y_1,\ldots,Y_k$ are i.i.d.\ Exponential(1) 
random variables. Also, as mentioned in Remark~2.7
of~\cite{CP20}, one can actually construct~$\BRI(\alpha)$
for all $\alpha>0$ simultaneously,
in such a way that $\BRI(\alpha_1)$
\emph{dominates} $\BRI(\alpha_2)$ for $\alpha_1>\alpha_2$:
for this, one can consider a Poisson process of rate~$1$
in~$\R^2_+$ (with coordinates $(\rho,u)$)
and then take those points that lie
below the curve $u=\frac{2\alpha}{\rho}$
when constructing~$\BRI(\alpha)$.

It is clear that the above construction is
not invariant with respect to translations
of~$\R^2$.
Let us also mention an equivalent construction 
which, in some sense, recovers the translation 
invariance property (the random interlacement
is obtained as an image of an object which is ``more
translationally invariant'').
For that, let us first observe that the following
fact holds (recall that the Bessel process of dimension~3, 
also denoted here as Bes(3), is the norm of the 
three-dimensional standard Brownian motion):
\begin{prop}
\label{p_represent_hW}
 Let $\hW$ be the conditioned Brownian motion
started somewhere outside $\B(1)$ (or on its boundary).
Then, there exists a pair of independent processes
$(Z,B)$, where $Z$ is Bes(3) and $B$ is a Brownian
motion such that
\begin{equation}
\label{eq_represent_hW} 
\hW_t = \exp(Z_{G_t}+iB_{G_t}), \text{ where }
 G_t = \int_0^t\frac{ds}{|\hW_s|^2}.
\end{equation}

\end{prop}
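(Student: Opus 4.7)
The plan is to pass to polar coordinates, apply Itô's formula so as to eliminate the $\ln\RR_t$ factor in the radial drift, and then apply a Dambis--Dubins--Schwarz time change. Starting from \eqref{df_Rt}--\eqref{df_Theta_t}, I would set $U_t := \ln\RR_t$ and apply Itô's formula. A direct calculation shows that the Itô correction $-\frac{1}{2\RR_t^2}$ exactly cancels the $\frac{1}{2\RR_t}$ contribution to the radial drift (after multiplication by $\frac{1}{\RR_t}$ from the chain rule), leaving
\[
 dU_t \;=\; \frac{1}{\RR_t^2\, U_t}\, dt \;+\; \frac{1}{\RR_t}\, dW^{(1)}_t,
\]
while the angular equation is already in pure martingale form, $d\Theta_t = \frac{1}{\RR_t}\, dW^{(2)}_t$. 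Note that, after this transformation, the singularity at $\RR_t=1$ in the drift has disappeared: it has been replaced by the classical $\frac{1}{U_t}\,dt$ singularity of a Bessel process at the origin.

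Next, I would introduce the time change $G_t = \int_0^t \RR_s^{-2}\, ds$, let $\tau_u$ denote its inverse, and set $\tilde U_u := U_{\tau_u}$, $\tilde\Theta_u := \Theta_{\tau_u}$. The substitution $s=\tau_v$, $ds=\RR_{\tau_v}^2\,dv$ turns the two SDEs above into
\[
 d\tilde U_u \;=\; \frac{1}{\tilde U_u}\,du + dM_u, \qquad d\tilde\Theta_u \;=\; dN_u,
\]
where $M_u = \int_0^{\tau_u} \RR_s^{-1}\, dW^{(1)}_s$ and $N_u = \int_0^{\tau_u} \RR_s^{-1}\, dW^{(2)}_s$ are continuous local martingales. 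Their (co)variations satisfy $\langle M,M\rangle_u = \langle N,N\rangle_u = u$ and $\langle M,N\rangle_u = 0$ (the latter because $W^{(1)}$ and $W^{(2)}$ are independent), so by Lévy's characterisation the pair $(M,N)$ is a standard two-dimensional Brownian motion. Setting $B:=N$, the equation for $\tilde U$ becomes the SDE of a Bes(3) process driven by $M$, hence $\tilde U = Z$ is Bes(3) and it is independent of $B$. Undoing the time change gives $\hW_t = \exp(U_t + i\Theta_t) = \exp(Z_{G_t} + i B_{G_t})$, which is \eqref{eq_represent_hW}.

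The main obstacle is the boundary behaviour when $\hW_0\in\partial\B(1)$: there the drift $\frac{1}{\RR_t\ln\RR_t}$ in \eqref{df_Rt} is singular, so Itô's formula cannot be applied directly at $t=0$. I would therefore first establish the proposition for starting points with $|\hW_0|>1$, where everything above is rigorous on each compact time interval (one also needs $G_\infty=\infty$ a.s.\ for $\tau$ to be a bijection of $[0,\infty)$ onto itself, which follows from the transience of Bes(3) away from~$0$ applied to~$Z$). To extend to $\hW_0\in\partial\B(1)$, I would use the construction of~$\hW$ from $\partial\B(1)$ mentioned above the definition of $\tau(x,r)$ in the text: approximate by starting points $w_n$ with $|w_n|\downarrow 1$, use Bes(3) starting from $\varepsilon_n := \ln|w_n|\downarrow 0$ on the right-hand side of the representation, and pass to the limit using the fact that Bes(3) is well-defined starting from~$0$ while the right-hand side is a.s.\ continuous in the initial data.
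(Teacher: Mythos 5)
Your proof is correct, but it takes a genuinely different route from the paper's. The paper simply cites two known facts: the skew-product representation of planar Brownian motion (Theorem~7.19 of Le Gall's book), and the fact that Bes(3) is one-dimensional Brownian motion conditioned to avoid the origin (McKean, Williams). Applied to $\hW = $ ``planar BM conditioned to avoid $\B(1)$'', those two facts immediately yield the representation, at the cost of leaving the mechanism opaque and of not explicitly addressing the boundary case $\hW_0\in\partial\B(1)$. Your argument is a self-contained SDE computation: pass to $U_t=\ln\RR_t$ so that the Itô correction kills the $\tfrac{1}{2\RR_t}$ drift, observe that the remaining drift $\tfrac{1}{\RR_t^2 U_t}$ and the variance rate $\tfrac{1}{\RR_t^2}$ share the common factor $\RR_t^{-2}$, remove it by the time change $G_t=\int_0^t\RR_s^{-2}\,ds$, and identify the time-changed pair as (Bes(3), BM) via Lévy's characterisation. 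The computation is right: the cancellation in $dU_t$ is exact, the bracket computation $\langle M,M\rangle_u=\langle N,N\rangle_u=u$, $\langle M,N\rangle_u=0$ is correct, and independence of $Z$ from $B$ follows since $Z$ is the strong solution of an SDE driven by $M$ with deterministic initial data while $B=N$ is independent of $M$. What your approach buys is transparency and a derivation from exactly the SDEs \eqref{df_Rt}--\eqref{df_Theta_t} that the paper itself takes as the definition of $\hW$, plus an explicit handling of the boundary start.

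One small phrasing point: you invoke ``transience of Bes(3) applied to $Z$'' to get $G_\infty=\infty$, which is slightly circular (you are identifying $Z$ as Bes(3) on $[0,G_\infty)$ in the first place). The clean argument is: on $[0,G_\infty)$, $Z$ solves the Bes(3) SDE driven by a Brownian motion, hence does not explode; but $\lim_{u\uparrow G_\infty}Z_u=\lim_{t\to\infty}\ln|\hW_t|=+\infty$ by transience of $\hW$, so $G_\infty<\infty$ would force an explosion, a contradiction. This is what you mean, so it's a cosmetic fix, not a gap.
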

\begin{proof}
 This follows from the skew-product 
representation of the Brownian motion, cf.\
e.g.\ Theorem~7.19 of~\cite{LeGall16},
together with the well-known fact that Bes(3)
is the (one-dimensional) Brownian motion conditioned
on never hitting the origin (cf.~\cite{McK63,Will74}).
\end{proof}

Now, let $\Sph^1_{2\pi}=\R/2\pi\Z$ be 
% circunference of length~$2\pi$; 
% (the boundary of) 
the circle of radius~$1$;
then, one can naturally
define the exponential map from~$\R\times \Sph^1_{2\pi}$
to~$\C$ by $\exp(r,\theta)=\exp(r+i\theta)$.
For $(r,\theta)\in\R\times \Sph^1_{2\pi}$, we
call a Bessel moustache (attached to that point)
a pair of independent trajectories
$(r+Z^{(1,2)}_t, \theta+B^{(1,2)}_t, t\geq 0)$,
where $Z^{(k)}$ is a Bes(3) process (taking values
in~$\R_+$ and starting at~$0$)
and $B^{(k)}$ is an independent Brownian motion
on~$\Sph^1_{2\pi}$ starting at~$0$. 

Then, due to~\eqref{rho_exponential} 
and Proposition~\ref{p_represent_hW},
we can define $\BRI(\alpha)$ in the following
way\footnote{In fact, we will obtain the same model if 
the interlacement trajectories are viewed 
as subsets of~$\C$; note the time change 
in~\eqref{eq_represent_hW}.}
(see Figure~\ref{f_Bessel_x_BM}):
\begin{itemize}
 \item take a Poisson point process 
of rate $2\alpha\times \frac{1}{2\pi}$ 
on $\R_+\times \Sph^1_{2\pi}$ (i.e.,
take a one-dimensional Poisson process 
with rate~$2\alpha$ and rotate the points 
randomly on $\Sph^1_{2\pi}$);
 \item attach Bessel moustaches to these points
 independently;
%  decorate the points independently by
% Bessel moustaches;
 \item transfer that picture to~$\C$ using the 
 exponential map.
\end{itemize}
\begin{figure}
\begin{center}
\includegraphics{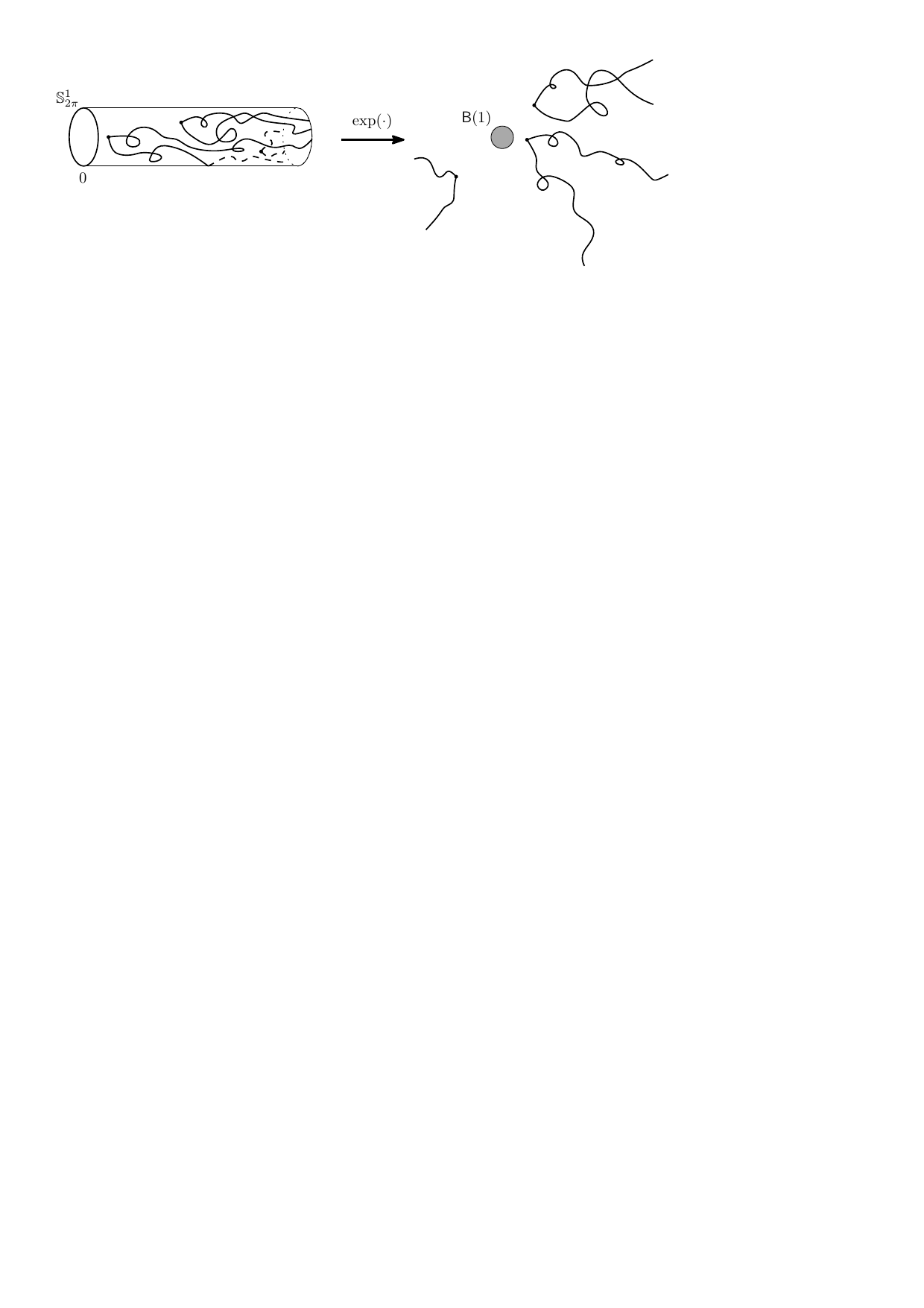}
\caption{On the equivalent definition of $\BRI(\alpha)$.}
\label{f_Bessel_x_BM}
\end{center}
\end{figure}
We also remark that in a similar way, one can obtain
the process $\BRI(\alpha;b)$ 
% (recall~\cite{CP20})
by taking a Poisson process on 
$[\ln b,+\infty)\times \Sph^1_{2\pi}$
in the above construction.

Now, we are almost ready to state our results,
but we still need to recall 
a couple of notations from~\cite{CP20}:
for $x\notin\B(1)$,
\[
\ell_x = \int_{\partial \B(1)}
\ln|x-z| \, H(x,dz)
= \frac{|x|^2-1}{2\pi}\int_{\partial \B(1)}
   \frac{\ln|x-z|}{|x-z|^2}\, dz,
\]
where $H(x,\cdot)$ is the entrance measure (with respect to
the Brownian motion) to~$\B(1)$ starting from~$x$
(see~\eqref{Poisson_kernel} below).
As argued in (3.9)--(3.10) of~\cite{CP20}, 
\begin{equation}
\label{behaviour_ell_x}
\ell_x = \big(1+O(|x|^{-1})\big) \ln |x| \text{ as }|x|\to\infty
\text{ and } \ell_x = \ln (|x|-1)  + O(1) \text{ as }|x|\downarrow 1.
\end{equation}
Let us also define (for the specific purpose of being inside $O$'s, given that~$\ell_\cdot$ changes sign and can be equal to~$0$)
$\tell_x:=|\ell_x|\vee 1$.

We recall another notation from~\cite{CP20}: for $x\in\R^2$,
$\Phi_x(\alpha)$ denotes the distance from~$x$ to the closest
trajectory of~$\BRI(\alpha)$. We now extend this by defining
$\Phi_x^{(0)}(\alpha):=0$,
$\Phi_x^{(1)}(\alpha):=\Phi_x(\alpha)$, $\Phi_x^{(2)}(\alpha)$ 
to be the distance to the second closest trajectory, 
$\Phi_x^{(3)}(\alpha)$ 
to be the distance to the third closest trajectory, 
and so on; note that a.s.\ it holds that
$0<\Phi_x^{(1)}(\alpha)<\Phi_x^{(2)}(\alpha)<\Phi_x^{(3)}(\alpha)<\ldots$. 
Let us denote also 
\[
\tY_x^{(1)}(\alpha) = \frac{2\alpha\ln^2|x|}{\ln(\Phi_x^{(1)}(\alpha)^{-1})}, \quad
\tY_x^{(j)}(\alpha) = \frac{2\alpha\ln^2|x|}{\ln(\Phi_x^{(j)}(\alpha)^{-1})}
    - \frac{2\alpha\ln^2|x|}{\ln(\Phi_x^{(j-1)}(\alpha)^{-1})},\quad
    j\geq 2.
\]
The following result states 
that~$\tY_x^{(m)}(\alpha)$ are approximately Exponential($1$)
for $m\geq 1$, and also that $\tY_x^{(j+1)}(\alpha)$
is approximately independent of
$\tY_x^{(1)}(\alpha),\ldots,\tY_x^{(j)}(\alpha)$:
\begin{theo}
\label{t_conv_Poisson}
Assume that $x\notin\B(1)$, and let $b_1,\ldots,b_j>0$.
Then
\begin{equation}
\label{eq_conv_Poisson_Y1}
 \IP\big[\tY_x^{(1)}(\alpha)>s\big]
  = e^{-s}\big(1+s\times O\big(
\Psi_1^{(s)} + \Psi_2^{(s)} + \Psi_3^{(s)} \big)\big),
\end{equation}
and, for $j\geq 1$,
\begin{align}
\lefteqn{
 \IP\big[\tY_x^{(j+1)}(\alpha)>s
  \mid \tY_x^{(1)}(\alpha)=b_1,\ldots,\tY_x^{(j)}(\alpha)=b_j\big]
}
\nonumber\\
& = e^{-s}\big(1+s(b+s)\times O\big(
\Psi_1^{(b+s)} + \Psi_2^{(b)} + \Psi_2^{(b+s)} 
 +\Psi_3^{(b)} + \Psi_3^{(b+s)}
    \big)\big),
\label{eq_conv_Poisson} 
\end{align}
where $b:=b_1+\cdots+b_j$, and 
\begin{align*}
 \Psi_1^{(h)} &= % \frac{\ln|x| + |\ln(|x|-1)|}
 \frac{\tell_x h}
 {\alpha \ln^2|x|} , \\
 \Psi_2^{(h)} &= \frac{1+\alpha h^{-1}\ln^2|x|+\ln|x|}
  {\exp(2h^{-1}\alpha\ln^2|x|)|x|\ln|x|} ,\\
 \Psi_3^{(h)} &= \frac%{\alpha h^{-1} \ln^2|x| + |\ln(|x|-1)|}
       {1+\Psi_1^{(h)}}
    {\exp(2h^{-1}\alpha\ln^2|x|)(|x|-1)} .
\end{align*}
\end{theo}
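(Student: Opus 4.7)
By Definition~\ref{df_BRI}, $\BRI(\alpha)$ is a Poisson point process on the space of (scaled) Wiener moustaches whose intensity measure, evaluated on the event that a trajectory meets a given compact $A$, equals $\pi\alpha\,\hcapa(A)$ --- this is the content of~\eqref{eq_vacant_Bro}. Pushing this process forward along the measurable map $\gamma\mapsto\dist(x,\gamma)$, the ordered sequence $\Phi_x^{(1)}(\alpha)<\Phi_x^{(2)}(\alpha)<\cdots$ is the sequence of atoms of a Poisson point process on $\R_+$ with continuous cumulative intensity
\[
    M(r) \;:=\; \pi\alpha\,\hcapa(\B(x,r)).
\]
Consequently $\bigl(M(\Phi_x^{(k)}(\alpha))\bigr)_{k\ge 1}$ is a unit-rate Poisson process on $\R_+$, whose increments are i.i.d.\ $\text{Exp}(1)$ and independent of the past; the theorem thus reduces to approximating $M(\Phi_x^{(j+1)}(\alpha))-M(\Phi_x^{(j)}(\alpha))$ by $\tY_x^{(j+1)}(\alpha)$ with errors of the form $\Psi_1^{(\cdot)}+\Psi_2^{(\cdot)}+\Psi_3^{(\cdot)}$.

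The technical core is therefore a quantitative capacity estimate for $\pi\hcapa(\B(x,r))$ with leading order $\tilde M(r):=2\alpha\ln^2|x|/\ln(1/r)$. To derive it, I would solve the Dirichlet problem on the complement of $\B(1)\cup\B(x,r)$ for the splitting function $u(z):=\IP_z[\tau(1)<\tau(x,r)]$: modulo multipole corrections of order $r/|x|$ and $1/(|x|-1)$, one has
\[
    u(z) \;=\; p_1 \,+\, B\bigl(\ln|z|-\ln|z-x|\bigr),
\]
so that matching the two boundary conditions --- and replacing the naive $\ln|x|$ by $\ell_x$, via~\eqref{behaviour_ell_x}, when integrating $\ln|x-\cdot|$ over the entrance distribution on $\partial\B(1)$ --- determines the splitting probability $p_1$ of the harmonic measure from infinity across the two components. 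Substituting into~\eqref{df_Br_cap} (using that $\ln|y|\equiv 0$ on $\partial\B(1)$ and $\ln|y|=\ln|x|+O(r/|x|)$ on $\partial\B(x,r)$) yields the estimate, with the three errors arising from three separate sources: $\Psi_1$ from the $\ell_x$-versus-$\ln|x|$ discrepancy, $\Psi_2$ from the $O(r/|x|)$ approximations on $\partial\B(x,r)$, and $\Psi_3$ from corrections localized near $\partial\B(1)$ (which accounts for the $|x|-1$ in its denominator).

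Granted the capacity estimate, \eqref{eq_conv_Poisson_Y1} is immediate: $\IP[\tY_x^{(1)}(\alpha)>s]=\exp(-M(r_s))$ for $r_s:=\exp(-2\alpha\ln^2|x|/s)$, and a first-order expansion of the exponential produces the claim. For the conditional~\eqref{eq_conv_Poisson}, the Poisson strong Markov property gives $M(\Phi_x^{(j+1)}(\alpha))-M(\Phi_x^{(j)}(\alpha))\sim\text{Exp}(1)$ independently of the conditioning, so it suffices to translate this into a statement about $\tY_x^{(j+1)}(\alpha)$. Applying the capacity estimate at both scales $r=\Phi_x^{(j)}\sim\tilde M^{-1}(b)$ (giving the $\Psi_i^{(b)}$ contributions) and $r=\Phi_x^{(j+1)}\sim\tilde M^{-1}(b+s)$ (giving the $\Psi_i^{(b+s)}$ contributions), and carefully unfolding the subtractive definition of $\tY_x^{(j+1)}(\alpha)$, produces the $s(b+s)$-prefactor in front of the $O(\cdot)$ --- the factor $b+s$ coming from multiplying by the larger scale when expanding the difference, the factor $s$ from the subsequent linearization of the exponential around $e^{-s}$. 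Monotonicity of $\Psi_1^{(\cdot)}$ in $h$ allows one to absorb $\Psi_1^{(b)}$ into $\Psi_1^{(b+s)}$, explaining its single occurrence in~\eqref{eq_conv_Poisson}.

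The main obstacle is the quantitative bookkeeping in the capacity estimate: producing all three errors simultaneously in a form sharp enough to survive both the subtractive definition of $\tY_x^{(j+1)}(\alpha)$ and the ensuing Taylor expansion, so as to recover precisely the combination of $\Psi_i^{(b)}$'s, $\Psi_i^{(b+s)}$'s, and the $s(b+s)$-prefactor stated in the theorem. A secondary technical point is the justification of the Poisson push-forward, for which it suffices to observe that each trajectory is a.s.\ non-empty and a.s.\ does not pass through~$x$.
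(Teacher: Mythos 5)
Your proposal follows the same route as the paper: express $\IP[\tY_x^{(1)}(\alpha)>s]$ and the conditional $\IP[\tY_x^{(j+1)}(\alpha)>s\mid\cdots]$ via the Poisson void probability~\eqref{eq_vacant_Bro} as $\exp\bigl(-\pi\alpha\,\hcapa(\B(x,r_s))\bigr)$ and $\exp\bigl(-\pi\alpha\bigl(\hcapa(\B(x,r_{b+s}))-\hcapa(\B(x,r_b))\bigr)\bigr)$ respectively (the paper packages this through the independent annular families $\Lambda_{x,\alpha}(a,b)$, you through the pushed-forward Poisson process of distances --- the two formulations are equivalent), then substitute a capacity estimate of the form $\pi\alpha\,\hcapa(\B(x,r_h))=h\bigl(1+O(\Psi_1^{(h)}+\Psi_2^{(h)}+\Psi_3^{(h)})\bigr)$ and Taylor-expand the exponential.

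The one place you diverge is the source of the capacity estimate. The paper simply invokes Lemma~\ref{l_cap_smalldisk} (a restatement of Lemma~3.11(iii) of~\cite{CP20}), which gives
$\hcapa(\B(y,r))=\tfrac{2}{\pi}\cdot\tfrac{\ln^2|y|+O(\cdots)}{\ln r^{-1}+\ell_y+\ln|y|+O(\cdots)}$
directly; substituting $\ln r_h^{-1}=2\alpha h^{-1}\ln^2|x|$ and simplifying produces exactly the three $\Psi_i^{(h)}$ error terms. You instead propose to re-derive this two-body capacity from scratch by solving a Dirichlet problem for the splitting function $u(z)=\IP_z[\tau(1)<\tau(x,r)]$ via a multipole expansion. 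That is conceptually sound (and your heuristic attribution of $\Psi_1$ to the $\ell_x$-versus-$\ln|x|$ discrepancy, $\Psi_2$ to $O(r/|x|)$ errors near $\partial\B(x,r)$, and $\Psi_3$ to corrections near $\partial\B(1)$ is a correct reading of where the terms come from), but it is effort re-spent: Lemma~\ref{l_cap_smalldisk} is already available in the paper and is precisely the ``quantitative bookkeeping'' you flag as the main obstacle. Citing it would have made your proof complete with no remaining gap. The rest of your account (the reduction to $M(\Phi_x^{(k)})$ being a unit-rate Poisson process, the observation that monotonicity of $\Psi_1^{(h)}$ in $h$ absorbs $\Psi_1^{(b)}$ into $\Psi_1^{(b+s)}$, the bookkeeping leading to the $s(b+s)$ prefactor) matches the paper's logic, which is itself terse at the final expansion step.
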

The above result can be interpreted informally in the following way:
\[
 \Big(\frac{2\alpha\ln^2|x|}{\ln(\Phi_x^{(1)}(\alpha)^{-1})},
  \frac{2\alpha\ln^2|x|}{\ln(\Phi_x^{(2)}(\alpha)^{-1})},
   \frac{2\alpha\ln^2|x|}{\ln(\Phi_x^{(3)}(\alpha)^{-1})},
   \ldots \Big)
\]
is approximately a Poisson process of rate~$1$ in~$\R_+$, 
as long as the error term in~\eqref{eq_conv_Poisson}
is small.
(Note that the quantity inside $O(\dots)$
in~\eqref{eq_conv_Poisson} does not depend on~$j$.)
Let us also observe that the error term in~\eqref{eq_conv_Poisson}
is $O(\alpha^{-1})$ when~$x$ is fixed and $\alpha\to\infty$,
and is $O(\frac{1}{\ln|x|})$
when~$\alpha$ is fixed and $|x|\to\infty$.

We need to define another important object, which
is derived from the Wiener moustache. 
\begin{df}
\label{df_amoeba}
Let $\eta$ be a Wiener moustache. 
The \emph{Brownian amoeba}~$\am$
is the connected component of the origin in the complement of
$\eta \subset \R^2$,
see Figure~\ref{f_amoeba}. 
\end{df}
\begin{figure}
\begin{center}
\includegraphics{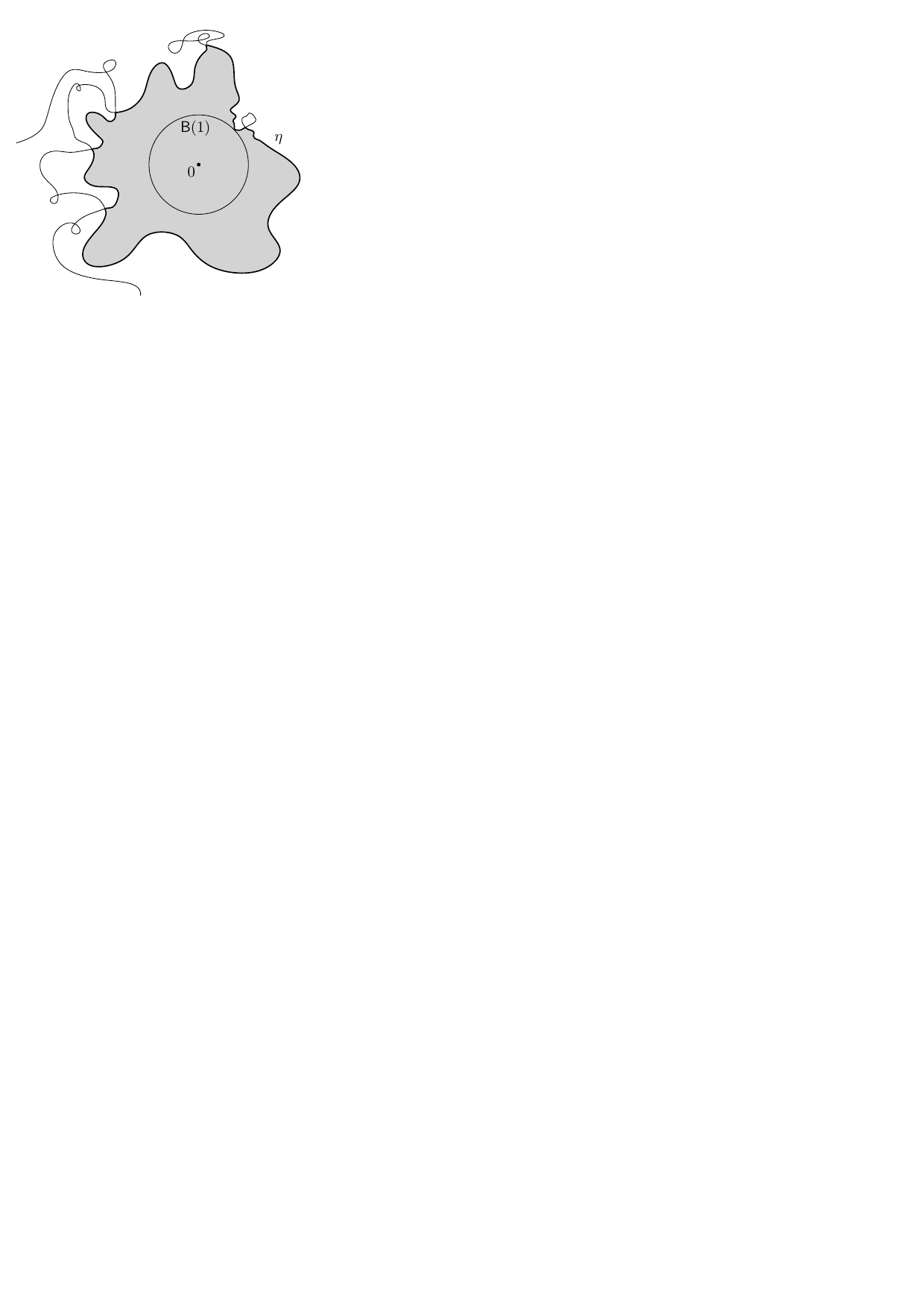}
\caption{The Brownian amoeba $\am$}
\label{f_amoeba}
\end{center}
\end{figure}
Observe that the Brownian amoeba a.s.\ contains $\B(1)$
(except for the point where the Wiener moustache touches the unit disk). 
To the best of our knowledge,
this object first appeared in~\cite{Wer94}
(it is the one with the distribution~$\LL_1$ there)
as the limiting shape of the connected components
of the complement of \emph{one} Brownian
trajectory.
A remarkable property of the Brownian amoeba
is that if we move the origin to a uniformly randomly
chosen (with respect to the area) point in~$\am$
and rescale it properly
(so that it touches the boundary
of the unit disk centered at the new origin), 
then the resulting object has the same law; 
this is Proposition~22 of~\cite{Wer94}.

Next, we formulate the main result of this
paper, which says that in certain regimes,
the connected components of the vacant set
converge in distribution to the Brownian amoeba.
Analogously to Definition~\ref{df_amoeba}, 
for $x\in\R^2$, define
\[
\cl_x(\alpha)=
\text{the connected component of } x \text{ in } \R^2 \setminus \BRI(\alpha) ,
\]
with the convention that $\cl_x(\alpha)=\emptyset$  if $x\in \BRI(\alpha)$.
In the next result, we show that 
 $\cl_0(\alpha)/\Phi_0(\alpha)$ converges to~$\am$
in total variation distance as $\alpha\to 0$, and also
 $\cl_x(\alpha)/\Phi_x(\alpha)$ converges to~$\am$
in total variation distance under
certain conditions (which we discuss in more detail below). In the following, $\distTV(X,Y)$
denotes the total variation distance between 
the laws of random objects~$X$ 
and~$Y$ (recall that this total variation distance 
is defined as $\inf\IP[X\neq Y]$, where the infimum
is taken over all couplings of $X$ and $Y$).
\begin{theo}
\label{t_amoeba}
\begin{itemize}
\item[(i)] We have for a positive constant~$c_1$
\begin{equation}
\label{eq_amoeba_0}
\distTV\Big(\frac{\cl_0(\alpha)}{\Phi_0(\alpha)}, \am\Big)
%   \leq \frac{2\alpha}{c} \ln \frac{c}{2\alpha}.
  \leq c_1\alpha.
\end{equation}
\item[(ii)] 
Assume that $|x| > 1$ and $\alpha \ln^2|x|\geq 2$.
 Then, for some, $c_2>0$ it holds that
\begin{equation}
\label{eq_amoeba_x}
\distTV\Big(\frac{\cl_x(\alpha) - x}{\Phi_x(\alpha)},\am \Big)
\leq c_2\frac{\tell_x \ln^3(\alpha \ln^2 |x|)}
   {\alpha \ln^2|x|}.
% c(\delta_0) \frac{\ln\big(\alpha \ln^2 |x|\big)}
%   {\alpha^{1/2} \ln|x|}.
\end{equation}
\end{itemize}
\end{theo}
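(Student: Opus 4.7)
Both parts proceed by the same coupling idea: the rescaled connected component and the Brownian amoeba are realized on one probability space so that they coincide, except on an event controlled by the probability that some other trajectory of the BRI encroaches on the amoeba.

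\textbf{Part (i).} Each trajectory $\rho_k^\alpha\eta_k$ stays outside $\B(\rho_k^\alpha)$, so the distance from $0$ to it is exactly $\rho_k^\alpha$; hence $\Phi_0(\alpha)=\rho_1^\alpha$ and the closest trajectory is $\rho_1^\alpha\eta_1$. On the given probability space, set $\am:=\am(\eta_1)$, which is a genuine Brownian amoeba; then $\cl_0(\alpha)/\Phi_0(\alpha)=\am$ holds on the event that no other rescaled trajectory touches $\am$. Conditionally on $\rho_1^\alpha$, the remaining trajectories form an independent $\BRI(\alpha;\rho_1^\alpha)$, and the scaling identity $\BRI(\alpha;b)/b\eqlaw\BRI(\alpha)$ combined with \eqref{eq_vacant_Bro} gives the conditional non-intersection probability $\exp(-\pi\alpha\hcapa(\am))$. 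Therefore
\[
\distTV\bigl(\cl_0(\alpha)/\Phi_0(\alpha),\am\bigr)\leq \IE\bigl[1-e^{-\pi\alpha\hcapa(\am)}\bigr]\leq \pi\alpha\,\IE[\hcapa(\am)],
\]
and \eqref{eq_amoeba_0} reduces to $\IE[\hcapa(\am)]<\infty$, which follows from the logarithmic scaling of the Brownian capacity in the diameter together with moment bounds on the amoeba's diameter.

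\textbf{Part (ii).} The same coupling strategy applies, but the closest trajectory to $x$ is no longer a Wiener moustache centered at $x$, so the argument proceeds in three stages. \emph{Local structure:} let $y_*\in\partial\B(x,\Phi_x(\alpha))$ be the closest approach point; by the strong Markov property together with time reversal, the closest trajectory decomposes, conditionally on $y_*$ and $\Phi_x(\alpha)$, into two independent pieces emanating from $y_*$, each a conditioned BM satisfying \eqref{differential_W} further constrained (for a short while) to avoid $\B(x,\Phi_x(\alpha))$. \emph{Girsanov comparison:} centering at $x$ and rescaling by $1/\Phi_x(\alpha)$, these two pieces are compared in total variation to a genuine Wiener moustache via a Radon--Nikod\'ym estimate -- since $|x|\gg 1$, the $h$-transform drift $\hW/(|\hW|^2\ln|\hW|)$ is small in a neighborhood of $x$, and its Girsanov correction over the timescale needed to cover the rescaled amoeba is controlled by a suitable martingale from Proposition~\ref{p_L_properties}, yielding a TV error of order $\tell_x/(\alpha\ln^2|x|)$ up to logarithmic factors. \emph{Other trajectories:} as in part (i), \eqref{eq_vacant_Bro} bounds the probability that any other trajectory meets the rescaled amoeba; the Bessel$\times$BM description of $\BRI(\alpha)$ from Proposition~\ref{p_represent_hW} identifies the effective local intensity of trajectories near $x$ in the rescaled picture as $(\alpha\ln^2|x|)^{-1}$, giving an $O\bigl(\hcapa(\am)/(\alpha\ln^2|x|)\bigr)$ contribution. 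Truncating the amoeba at diameter $R\asymp\ln(\alpha\ln^2|x|)$ to handle the logarithmic tail of its capacity and optimizing in $R$ produces the $\ln^3(\alpha\ln^2|x|)$ factor in \eqref{eq_amoeba_x}.

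The main technical obstacle will be the Girsanov comparison in the second stage of Part (ii): one must construct the coupling explicitly in the two-trajectory setting and control it uniformly over the (random) region swept out by the amoeba. This is precisely the purpose of the martingale family of Proposition~\ref{p_L_properties}, and combined with optional stopping at the time the trajectories leave a slightly enlarged amoeba, it yields the quantitative TV bound of the correct order.
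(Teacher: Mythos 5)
Your Part~(i) is correct and takes a mildly different route from the paper. The paper bounds $\IP[\cl_0(\alpha)=\am_1]$ from below by $\IP[\rad(\am_1)<\rho_2^\alpha]$ (i.e., by looking only at the second-closest trajectory) and then uses the polynomial tail bound on $\rad(\am)$ from Corollary~\ref{c_rad_amoeba} together with the fact that $\rho_2^\alpha/\rho_1^\alpha=\exp(Y_2/2\alpha)$. You instead use the scaling identity $\BRI(\alpha;b)/b\eqlaw\BRI(\alpha)$ together with~\eqref{eq_vacant_Bro} to obtain the non-intersection probability $\exp(-\pi\alpha\hcapa(\am))$ for the whole remaining process, and then bound $\IE[1-e^{-\pi\alpha\hcapa(\am)}]\le\pi\alpha\,\IE[\hcapa(\am)]$. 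Both are valid; yours buys a cleaner one-line inequality at the cost of needing $\IE[\hcapa(\am)]<\infty$ (which does follow from $\capa(\bar\am)\le\tfrac2\pi\ln\rad(\am)$ and Corollary~\ref{c_rad_amoeba}), whereas the paper's version avoids any discussion of $\hcapa(\am)$.

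For Part~(ii) the high-level plan is aligned with the paper's (decompose the nearest trajectory near $x$, couple its local trace to that of a genuine Wiener moustache, and control the other trajectories), but three concrete issues keep the sketch from closing the argument. First, the comparison mechanism you propose --- a Girsanov/Radon--Nikod\'ym estimate based on the smallness of the $h$-transform drift near~$x$ --- is not what is done, and is not obviously workable as stated: the object to be compared with the moustache is $\hW$ \emph{further conditioned} on never hitting $\B(x,r)$, and this additional conditioning alters the drift and excursion structure in a way that does not reduce to a small additive drift. The paper instead compares the two conditional laws via hitting probabilities, entrance measures, and excursion counts between the annuli $\partial\B(y,r_1)$ and $\partial\B(y,r_2)$ (Lemmas~\ref{l_cond_B(y,r)} and~\ref{l_cond_full_B(y,r)}), using the martingale $L(\cdot,y)$ of Proposition~\ref{p_L_properties} only to get sharp hitting estimates (Lemma~\ref{l_cond_escape_or_hit}), not a Girsanov correction. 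Second, your proposed truncation of the amoeba at rescaled radius $R\asymp\ln(\alpha\ln^2|x|)$ is too small: since $\IP[\rad(\am)>R]\le 2R^{-\gamma_2}$ is only polynomial in $R$, truncating at a logarithmic scale would leave a tail probability $\sim(\ln(\alpha\ln^2|x|))^{-\gamma_2}$, far larger than the claimed error $\tilde O\bigl((\alpha\ln^2|x|)^{-1}\bigr)$. In the paper the truncation radius is $n^\theta$ with $\theta\ge\max(2,\gamma_2^{-1})$ and $n\ge n_0\asymp\alpha\ln^2|x|/\ln(\alpha\ln^2|x|)$, a polynomial scale, so that $\IP[(G_1^{(n)})^\complement]\lesssim n^{-1}$; the $\ln^3$ in~\eqref{eq_amoeba_x} comes not from this truncation but from the cutoff level $b_0=3\ln(\alpha\ln^2|x|)$ together with the error terms in Theorem~\ref{t_conv_Poisson} and Lemma~\ref{l_cond_full_B(y,r)}. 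Third, identifying the cell of the coupled trajectory around $x$ as a rescaled and recentred standard Brownian amoeba is a nontrivial step (Lemma~3.9 of~\cite{CP20}); your decomposition into ``two independent pieces emanating from $y_*$'' conditional on the closest approach point is plausible but not proved, and the paper sidesteps this by conditioning on the entrance point to $\partial\B(x,e^{-(n-1)})$ rather than on the argmin. In short, the sketch correctly identifies the pieces that must fit together, but the central coupling estimate is left unproved and the stated truncation scale would not yield the required error bound.
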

We are not sure how sharp
is the estimate~\eqref{eq_amoeba_x}.
In any case, note that for the term in
the right-hand side of~\eqref{eq_amoeba_x}
to be small,
we always need~$\alpha \ln^2|x|$ to be large, but that 
might not be enough for convergence to~$\am$ (since there 
is also the factor~$\tell_x$, which grows to infinity
as $|x|\to\infty$ or $|x|\downarrow 1$).
To give a few examples, we observe that
the error term in~\eqref{eq_amoeba_x} is
\begin{itemize}
 \item $O\big(\frac{\ln^3\alpha}{\alpha}\big)$ when~$x\notin\B(1)$ 
 is fixed and $\alpha\to\infty$;
 \item $O\big(\frac{(\ln\ln |x|)^3}{\ln |x|}\big)$ when~$\alpha>0$ 
 is fixed and $|x|\to\infty$;
 \item $O\big(\frac{|\ln v|\ln^3(\alpha v^2)}{\alpha v^2}\big)$ when
 $v:=|x|-1\downarrow 0$ (in this case, we need~$\alpha$ to be
 ``a bit larger'' than $(1/v)^2$, i.e., $\alpha=(1/v)^{2+\eps}$
 would work).
\end{itemize}
In any case, it is interesting to observe the similarity between
what is seen from the origin in the $\alpha\to 0$ regime
and what is seen from $x\notin \B(1)$ the ``high
intensity'' regime:
% (i.e., $\alpha \ln^2|x|\to\infty$):
% equivalently, $\alpha \ln^2|x|\to\infty$):
 only one trajectory (the one that forms
the amoeba) really ''matters''; others are much more distant.

\medskip

We also discuss the ``central cell'' $\cl_0(\alpha)$
in the regime when $\alpha\to \infty$.
In this situation, the cell is likely to be formed
by \emph{many} trajectories.
Let us denote by 
\begin{align}
 m_\alpha &= \min_{x\in\partial\B(1)} \Phi_x(\alpha),\\
 M_\alpha &= \max_{x\in\partial\B(1)} \Phi_x(\alpha)
\end{align}
the minimal and maximal distances from 
a boundary point of~$\B(1)$ to the boundary of 
the cell~$\cl_0(\alpha)$, see Figure~\ref{f_central_cell}.
\begin{figure}
\begin{center}
\includegraphics[width=0.64\textwidth]{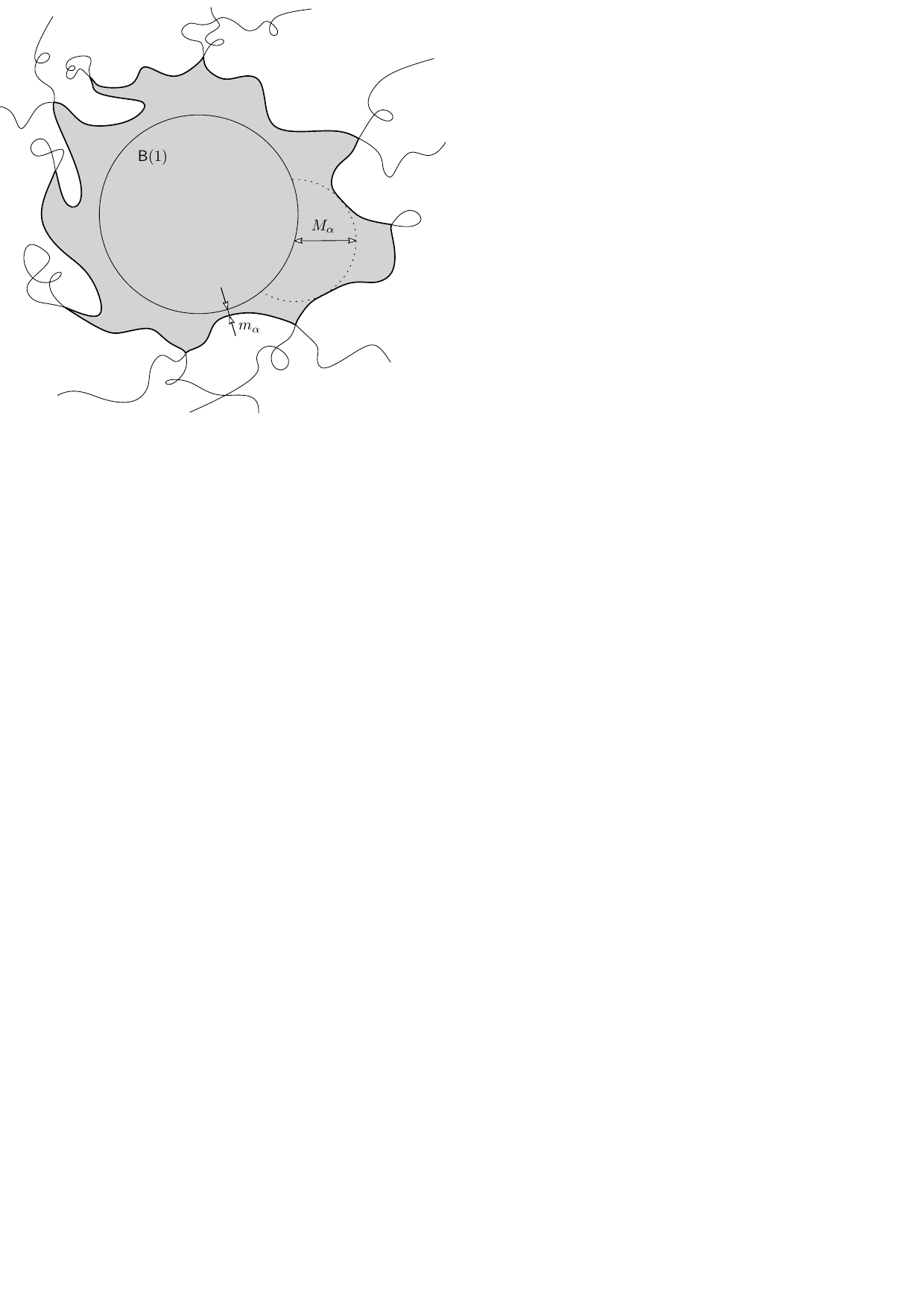}
\caption{The central cell (in this case,
formed by five bi-infinite trajectories).}
\label{f_central_cell}
\end{center}
\end{figure}
Now, \eqref{rho_exponential} immediately implies
that $2\alpha\ln(1+m_\alpha)$ is an Exp(1)
random variable
(so, informally, $m_\alpha$ is of order~$\alpha^{-1}$
with a random factor in front); 
also (the second part of)
Theorem~2.20 of~\cite{CP20} implies that,
for \emph{fixed} $x\in\partial\B(1)$,
$\alpha \big(\Phi_x(\alpha)\big)^2$
is \emph{approximately} an Exp(1)
random variable
(so, informally, the distance from a generic
boundary point of~$\B(1)$ to the boundary
of the cell is of order $\alpha^{-1/2}$;
again, with a random factor in front).
We now obtain that~$M_\alpha$ is concentrated
around $\sqrt{\frac{\ln \alpha}{2\alpha}}$
(without any random factors), and, moreover, even the a.s. convergence takes place
(here we, of course, assume that,
as mentioned earlier,
the random interlacement
process is constructed for all $\alpha\geq 0$
simultaneously, in such a way that $\BRI(\alpha_1)$
is dominated by $\BRI(\alpha_2)$ for $\alpha_1<\alpha_2$,
meaning that $M_\alpha$ is nonincreasing):
\begin{theo}
\label{t_central_cell}
 It holds that % , for any $\delta>0$
\begin{equation}
\label{eq_M_central_cell}
% \lim_{\alpha\to\infty}
%  \IP\Big[(1-\delta)\sqrt{\frac{\ln \alpha}{2\alpha}}
%  \leq M_\alpha \leq
%  (1+\delta)\sqrt{\frac{\ln \alpha}{2\alpha}} \Big]
%   =1.
\Big(\frac{\ln \alpha}{2\alpha}\Big)^{-1/2} M_\alpha 
 \to 1 \text{ a.s., as }\alpha\to\infty.
\end{equation}
\end{theo}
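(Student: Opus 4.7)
Proof plan. The strategy is to prove matching upper and lower bounds for $M_\alpha/\sqrt{\ln\alpha/(2\alpha)}$ in probability by means of first- and second-moment methods applied to the $1$-Lipschitz random field $\theta\mapsto\Phi_{e^{i\theta}}(\alpha)$ on $\partial\B(1)$, and then to promote these to the claimed almost-sure convergence using the monotonicity of $\alpha\mapsto M_\alpha$ recorded just before the theorem together with a Borel--Cantelli argument along a suitable subsequence. For brevity, write $h_\alpha:=\sqrt{\ln\alpha/(2\alpha)}$.

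The central input is a sharp marginal asymptotic of the form
\[
 \IP\bigl[\Phi_x(\alpha)>r\bigr]=\exp\bigl(-\alpha r^2(1+o(1))\bigr)
\]
as $r\to 0$, $x\in\partial\B(1)$, obtained from \eqref{eq_vacant_Bro} via an asymptotic evaluation of $\hcapa(\B(x,r))=\capa(\B(1)\cup\B(x,r))$. A transparent derivation works in the Bessel-moustache representation \eqref{eq_represent_hW} on the cylinder, where the moustache starting points form a Poisson process of intensity $\alpha/\pi$ on $[0,\infty)\times\Sph^1_{2\pi}$, and one accounts both for the direct contribution of starting points close to $x$ and for the angular meandering of moustaches whose starting point sits slightly further away but whose angular Brownian component can wander into a neighborhood of $x$ before the radial Bes(3)-component exceeds height $r$; equivalently, one can carry out a perturbation analysis of the conformal radius of $\B(1)\cup\B(x,r)$ using the Joukowski map $z\mapsto z+1/z$.

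The upper bound $M_\alpha\leq(1+\eps)h_\alpha$ with high probability then follows from a union bound: cover $\partial\B(1)$ by a mesh $x_1,\ldots,x_N$ of angular spacing $\delta\ll h_\alpha$; by $1$-Lipschitz continuity, the event $\{M_\alpha>(1+\eps)h_\alpha\}$ forces $\Phi_{x_i}(\alpha)>(1+\eps/2)h_\alpha$ for some $i$, and the marginal estimate yields a probability at most $N\alpha^{-(1+\eps/2)^2/2}$, polynomially small in $\alpha$ for every $\eps>0$. For the lower bound $M_\alpha\geq(1-\eps)h_\alpha$ with high probability, take a coarser mesh $y_1,\ldots,y_{N'}$ on $\partial\B(1)$ at angular spacing much larger than $h_\alpha$, so that the disks $\B(y_j,(1-\eps)h_\alpha)$ are pairwise disjoint, and apply a second moment argument to $N_r:=\#\{j:\Phi_{y_j}(\alpha)>(1-\eps)h_\alpha\}$; the required near-independence is controlled by the analogous capacity estimate $\hcapa(\B(y_i,r)\cup\B(y_j,r))\approx\hcapa(\B(y_i,r))+\hcapa(\B(y_j,r))$ for well-separated disjoint disks, which gives $\IE N_r\to\infty$ with $\Var N_r=O(\IE N_r)$, so that $N_r>0$ w.h.p.\ by Chebyshev.

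Finally, set $\alpha_n:=\exp(\sqrt n)$: along this subsequence, $\alpha_{n+1}/\alpha_n\to 1$ (hence $h_{\alpha_{n+1}}/h_{\alpha_n}\to 1$), while the tail bounds above are summable for each fixed $\eps>0$. Borel--Cantelli therefore gives $M_{\alpha_n}/h_{\alpha_n}\to 1$ almost surely, and the sandwich $M_{\alpha_{n+1}}\leq M_\alpha\leq M_{\alpha_n}$ for $\alpha\in[\alpha_n,\alpha_{n+1}]$ afforded by the monotonicity of $M_\alpha$ extends the conclusion to all $\alpha\to\infty$. The main technical obstacle is the marginal estimate for $x$ exactly on $\partial\B(1)$, since Theorem~\ref{t_conv_Poisson} degenerates as $\ln|x|\to 0$ and a separate computation is needed to identify the correct leading constant in the exponent -- this is precisely what fixes the scale $\sqrt{\ln\alpha/(2\alpha)}$ rather than a constant multiple of it; a lighter but related issue is the capacity estimate for unions of well-separated disks in the second moment step.
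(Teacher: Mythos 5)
Your proposal matches the paper's proof in the upper bound and in the final monotonicity/Borel--Cantelli step: the paper also covers $\partial\B(1)$ by a mesh at spacing $O(h_\alpha)$, uses the marginal tail $\IP[\Phi_x(\alpha)>r]=\exp(-\alpha r^2(1+o(1)))$ (obtained from \eqref{eq_vacant_Bro} together with the capacity asymptotic $\hcapa(\B(x,r))=\tfrac{r^2}{\pi}(1+O(r))$ for $x\in\partial\B(1)$, which is Lemma~3.12 of~\cite{CP20} rather than a new computation), and passes to a.s.\ convergence along the subsequence $\alpha_k=e^{\sqrt k}$ exactly as you describe.

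The lower bound is where you genuinely deviate. You propose a second-moment method on the count $N_r=\#\{j:\Phi_{y_j}(\alpha)>(1-\eps)h_\alpha\}$, controlling the covariances via additivity of capacity for unions of well-separated disks. The paper instead chooses a coarser mesh $y_1,\dots,y_{m_{\delta,\alpha}}$ with $m_{\delta,\alpha}\sim\alpha^{\frac12(1-\delta/3)}$, and splits the interlacement trajectories hitting $\bigcup_k A_k$ into type~1 (hitting exactly one $A_k$) and type~2 (hitting several). The type-1 hitting counts across $k$ are then genuinely independent Poissons, which yields $\IP[\text{some }A_j\text{ avoided by all type-1 trajectories}]\geq 1-\exp(-\alpha^{\delta/3})$ without a variance computation; the type-2 trajectories are shown to be so rare (expected number hitting any $A_k$ is $O(\alpha^{-\delta/3}\ln^2\alpha)$) that they do not spoil the chosen index. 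The two routes are logically parallel: the content of your covariance control is exactly the content of the paper's type-2 bound, namely the estimate that a conditioned Brownian trajectory that has just entered one $A_k$ hits a neighbouring $A_\ell$ with probability $\lesssim(r/|y_k-y_\ell|)^2$. The paper obtains the exponent~$2$ from the Bes(3)-times-Brownian-motion representation (Proposition~\ref{p_represent_hW}) and the four-dimensional hitting estimate $\IP_x[W^{(4)}\text{ hits }\B(y,\rho)]\asymp(\rho/|x-y|)^2$; in your language it would need to appear as a sharp-enough error term in $\hcapa(\B(y_i,r)\cup\B(y_j,r))-\hcapa(\B(y_i,r))-\hcapa(\B(y_j,r))$. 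I would push back on your characterisation of this as ``a lighter but related issue'': it is the same near-boundary estimate, and getting the exponent right (the naive bulk scaling $1/\ln(d/r)$ would be far too large here) is precisely the point. In short: both approaches work, the paper's trades the variance bookkeeping for an explicit independence structure, and the hard analytic ingredient is shared.

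One small imprecision: you cite the Joukowski map $z\mapsto z+1/z$ as a route to the capacity of $\B(1)\cup\B(x,r)$. That map is degenerate exactly at $\pm1$ and maps a boundary ``blister'' to a slit perturbation whose size depends on $\theta$; this can be made to work but is not as routine as your parenthetical suggests, and the paper simply reuses Lemma~3.12 of~\cite{CP20}.
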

Let us also remark that it would be interesting
to investigate the shape of the
(suitably rescaled) ``interface''
(that is, the boundary of $\cl_0(\alpha)$)
seen in some window
around a typical boundary point (say, $1$)
of~$\B(1)$ in the regime $\alpha\to\infty$. 

The rest of the paper will be organized in the following way: in Section~\ref{s_aux}, we recall some facts
and prove a few technical lemmas about the
conditioned Brownian motion, notably,
in Section~\ref{s_further_cond_BM} we introduce
and discuss a new martingale for the conditioned
Brownian motion.
Then, 
in Section~\ref{s_proofs} we give the proofs
of Theorems~\ref{t_conv_Poisson}, \ref{t_amoeba},
and~\ref{t_central_cell}.

\section{Some auxiliary facts}
\label{s_aux}
First, we recall a basic fact about hitting
circles centered at the origin by the conditioned
Brownian motion.
Since $\tfrac{1}{\ln|\hW_t|}$
is a local martingale, the optional stopping theorem implies
that for any $1<a<|x|<b<\infty$
\begin{equation}
\label{hitting_condBM}
 \IP_x[\htau(b)<\htau(a)] = \frac{(\ln a)^{-1}-(\ln |x|)^{-1}}
 {(\ln a)^{-1}-(\ln b)^{-1}}
   =\frac{\ln(|x|/a)  \times \ln b}{\ln (b/a) \times\ln |x|}.
\end{equation}
Sending~$b$ to infinity in~\eqref{hitting_condBM}
we also obtain that for $1\leq a\leq |x|$
\begin{equation}
\label{escape_condBM}
 \IP_x[\htau(a)=\infty] = 1-\frac{\ln a}{\ln |x|}.
\end{equation}

Also, in the following,
we will need to refer to the fact that
\begin{equation}
\label{diff_logs}
 \ln|x| = \ln|y| 
 + \ln\big(1+\tfrac{|x|-|y|}{|y|}\big)
  = \ln|y| + O\big(\tfrac{|x-y|}{|y|}\big).
\end{equation}

We denote by $\hW^{x,r}$ the Brownian motion
conditioned on not hitting~$\B(x,r)$;
it is straightforward to check that it can be obtained
from the Brownian motion conditioned by ``canonical'' conditioned Brownian motion~$\hW$ via the linear space-time transformation,
\[
  \hW^{x,r}_t = x+\hW_{r^2 t}.
\]

Then, we need a fact about
the conditional entrance measure of a Brownian motion
(we need to state and prove it here because
the second statement of Lemma~3.3 of~\cite{CP20}
is incorrect, 
 a logarithmic factor is missing there):
\begin{lem}
\label{l_cond_entrance_m}
 Let $0<2r<|x|<R$, and let~$\nu_x^{r,R}$ be 
 the conditional
entrance measure of the Brownian motion started at~$x$ to~$\B(r)$, 
given that $\tau(r)<\tau(R)$. 
Then, we have (abbreviating $s:=|x|$)
\begin{equation}
\label{eq_cond_entrance_m}
 \Big|\frac{d\nu_x^{r,R}}{d\hm_{\B(r)}}-1\Big| 
 = O\Big(\frac{r\ln\frac{R}{r}}{s\ln(1+\frac{R}{s})}\Big)
\end{equation}
(here, $\hm_{\B(r)}$ is the harmonic measure on~$\B(r)$,
which is also uniform on~$\partial\B(r)$).
\end{lem}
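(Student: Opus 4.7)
The plan is to compute the density of $\nu_x^{r,R}$ explicitly through a Fourier expansion on the annulus, and then estimate the higher coefficients sharply. By rotational symmetry one may take $x=s\in\R_+$ and write $z=re^{i\phi}\in\partial\B(r)$. Then the subprobability measure $\mu_x^{r,R}(dz):=\IP_x[W_{\tau(r)}\in dz,\,\tau(r)<\tau(R)]$ has density with respect to $d\phi$ of the form $\tfrac{1}{2\pi}\sum_{n\in\Z}h_n(s)e^{in\phi}$, where $h_n(\rho)e^{in\phi}$ solves the Dirichlet problem in the annulus $\{r\leq|z|\leq R\}$ with data $e^{in\phi}$ on $\partial\B(r)$ and $0$ on $\partial\B(R)$. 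Separation of variables yields $h_0(s)=\ln(R/s)/\ln(R/r)$, which is also the total mass $p:=\IP_x[\tau(r)<\tau(R)]$, and for $n\neq 0$
\[
h_n(s)=(r/s)^{|n|}\,\frac{1-(s/R)^{2|n|}}{1-(r/R)^{2|n|}}\,.
\]
After normalizing, $\frac{d\nu_x^{r,R}}{d\hm_{\B(r)}}(\phi)-1=p^{-1}\sum_{n\neq 0}h_n(s)e^{in\phi}$, so the task reduces to bounding $p^{-1}\sum_{n\neq 0}|h_n(s)|$.

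The hypothesis $r<|x|/2<R/2$ yields $(r/R)^{2n}\leq 1/4$, so $1-(r/R)^{2n}\geq 3/4$ and hence $|h_n(s)|\leq \tfrac{4}{3}(r/s)^n(1-(s/R)^{2n})$ for $n\geq 1$. A direct summation gives
\[
\sum_{n\geq 1}(r/s)^n\bigl(1-(s/R)^{2n}\bigr)=\frac{r/s}{1-r/s}-\frac{rs/R^2}{1-rs/R^2}=\frac{(r/s)(1-s^2/R^2)}{(1-r/s)(1-rs/R^2)}\,,
\]
and the denominator is bounded below by $1/4$ under our constraints on $r,s,R$. Thus $\sum_{n\neq 0}|h_n(s)|=O\bigl((r/s)(1-s^2/R^2)\bigr)$, and dividing by $p=\ln(R/s)/\ln(R/r)$ gives an error of order $\tfrac{r\ln(R/r)}{s}\cdot\tfrac{1-s^2/R^2}{\ln(R/s)}$.

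To match the lemma's bound it suffices to verify the elementary inequality $\tfrac{1-s^2/R^2}{\ln(R/s)}=O\bigl(\tfrac{1}{\ln(1+R/s)}\bigr)$ uniformly for $R>s>0$. Setting $q=R/s$, this reduces to the boundedness of $(1-q^{-2})\ln(1+q)/\ln q$ on $(1,\infty)$: as $q\downarrow 1$ the ratio tends to $2\ln 2$ (since $1-q^{-2}\sim 2(q-1)$ and $\ln q\sim q-1$), as $q\to\infty$ it tends to $1$, and continuity handles the rest. The main obstacle, and the real reason for the curious form of the bound, is that the naive estimate $|h_n(s)|\leq(r/s)^n$ only produces $O\bigl(\tfrac{r\ln(R/r)}{s\ln(R/s)}\bigr)$, which diverges as $R\downarrow s$; one must retain the factor $1-(s/R)^{2n}$ in the numerator of $h_n$ to extract the compensating $1-s^2/R^2$ that cancels $\ln(R/s)$ and produces the $\ln(1+R/s)^{-1}$ in the statement.
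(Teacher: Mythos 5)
Your proof is correct and takes a genuinely different route from the paper. The paper argues additively: it splits $\IP_x[W_{\tau(r)}\in M,\,\tau(r)<\tau(R)]$ as $\IP_x[W_{\tau(r)}\in M]$ minus the contribution that reaches $\partial\B(R)$ first, applies the unconditioned entrance-measure approximation $\IP_y[W_{\tau(r)}\in M]=\hm_{\B(r)}(M)(1+O(r/|y|))$ to both terms, then divides by $\IP_x[\tau(r)<\tau(R)]$; a preliminary reduction to $R-s\geq s/2$ (conditioning on the first exit through $\partial\B(\tfrac{2}{3}s)$) disposes of the regime where $R$ is close to $s$ and makes $\ln(1+R/s)$ comparable to $\ln(R/s)$. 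You instead solve the annulus Dirichlet problem explicitly by separation of variables and sum the non-zero Fourier modes, which avoids the reduction entirely: the compensating factor $1-(s/R)^{2n}$ in $h_n$ directly produces the $1-s^2/R^2$ that converts $\ln(R/s)^{-1}$ into $\ln(1+R/s)^{-1}$ after the elementary verification at the end. Your method is more explicit and isolates exactly where each factor in the bound comes from, at the cost of being tied to the circular geometry; the paper's argument is softer, reuses a known estimate, and would transfer more readily to non-circular inner sets, but buries the $R\downarrow s$ behaviour in the WLOG step.
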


\begin{proof}
 Without restricting generality, one can assume that
$R-s\geq s/2$ (otherwise, one can just condition the
first entry point to~$\partial\B\big(\tfrac{2}{3}s\big)$).
Denote also by $\hat \nu$ the conditional
entrance measure of the Brownian motion 
started at~$x$ to~$\partial\B(R)$, 
given that $\tau(R)<\tau(r)$. 
Consider $M\subset \partial\B(r)$ and note first
that, for~$y$ with $|y|\geq 2r$
\begin{equation}
\label{uncond_entrance_m}
  \IP_y\big[W_{\tau(r)}\in M\big]
   = \hm_{\B(r)}(M)\big(1+O\big(\tfrac{r}{|y|}\big)
   \big);
\end{equation}
this is a well-known fact that can be obtained
 from the explicit formula for the (unconditional)
entrance measure to a disk from outside; see, e.g.\ 
Theorem~3.44 of~\cite{MP10} or~\eqref{Poisson_kernel}
below.
Then, we write
\begin{align*}
\lefteqn{
 \IP_x\big[W_{\tau(r)}\in M, \tau(r)<\tau(R)\big]
 }\\
   &= \IP_x\big[W_{\tau(r)}\in M\big]
    - \IP_x\big[W_{\tau(r)}\in M, \tau(R)<\tau(r)\big]
    \\
    \intertext{\quad
% \qquad \qquad \qquad 
% \qquad \qquad 
\footnotesize (by \eqref{uncond_entrance_m} and conditioning on the 
 entrance point to $\partial\B(R)$)}
 &=\hm_{\B(r)}(M)\big(1+O\big(\tfrac{r}{s}\big)\big)
    - \IP_x[\tau(R)<\tau(r)]
       \int_{\partial\B(R)} \IP_z\big[W_{\tau(r)}\in M\big]
         \, d{\hat\nu}(z)\\
    \intertext{\quad
% \qquad \qquad \qquad 
% \qquad \qquad 
\footnotesize (again by \eqref{uncond_entrance_m})}    
&= \hm_{\B(r)}(M)\big(1+O\big(\tfrac{r}{s}\big)\big)
    -  \IP_x[\tau(R)<\tau(r)]
     \hm_{\B(r)}(M)\big(1+O\big(\tfrac{r}{R}\big)\big)\\
 &= \hm_{\B(r)}(M)
  \big(\IP_x[\tau(r)<\tau(R)]+O\big(\tfrac{r}{s}\big)\big).
\end{align*}
We then divide the above by
$\IP_x[\tau(r)<\tau(R)] = \frac{\ln\frac{R}{s}}{\ln\frac{R}{r}}$
to arrive to~\eqref{eq_cond_entrance_m}
(recall the remark at the beginning of the proof).
\end{proof}

Next, we recall a fact about the capacity of a union
of two disks:
\begin{lem}
\label{l_cap_smalldisk}
  Assume that $r<|y|-1$. We have
\begin{equation} 
\label{eq_cap_smalldisk}
\hcapa\big( \B(y,r)\big)=
 \capa\big(\B(1)\cup \B(y,r)\big)  = \frac{2}{\pi}\cdot
 \frac{\ln^2|y|+O\big(\frac{r(1+|\ln r|+\ln|y|)\ln|y|}{|y|}\big)}
{\ln r^{-1}+\ell_y+\ln|y|
 + O\big(\frac{r}{|y|-1}\ln\frac{|y|-1}{r} \big)}.
\end{equation}
\end{lem}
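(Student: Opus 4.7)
The plan is to use the probabilistic characterization of capacity via Brownian motion from infinity. Since $\ln|z|=0$ for $z\in\partial\B(1)$, the definition~\eqref{df_Br_cap} reduces to
\[
\capa\bigl(\B(1)\cup\B(y,r)\bigr) = \frac{2}{\pi}\,p\,\bar L,
\]
where $p:=\hm_A(\partial\B(y,r))$ is the mass of the harmonic measure from infinity on the small disk and $\bar L$ is the conditional mean of $\ln|\cdot|$ under $\hm_A$ given the entry point lies on $\partial\B(y,r)$. The task thus reduces to computing $p$ (responsible for the denominator) and $\bar L$ (responsible for the leading factor $\ln|y|$ in the numerator) with sufficient precision.

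For $\bar L$, I would approximate $\hm_A$ by the entrance distribution of a BM started on $\partial\B(R)$ and let $R\to\infty$. After translating so that $y$ is at the origin, a version of Lemma~\ref{l_cond_entrance_m} (with target disk $\B(r)$ and ``avoid'' region the translated disk $\B(-y,1)$) shows that the conditional entrance measure to $\partial\B(y,r)$ is close to uniform in total variation, with an error of order $O(r/|y|)$ up to logarithmic factors. On the uniform measure, the mean-value property applied to the harmonic function $\ln|\cdot|$ on the disk $\B(y,r)$ (which does not contain $0$ by assumption) gives exactly $\ln|y|$; tracking the error on the integrand then yields $\bar L = \ln|y| + O\bigl(r(1+|\ln r|+\ln|y|)/|y|\bigr)$.

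For $p$, I would use the Doob $h$-transform with $h(z)=\ln|z|$: for BM started at $x$ with $|x|=R$, the hitting-point identity on $\partial\B(y,r)$ gives
\[
\IP_x[\tau(y,r)<\tau(1)]
 = \frac{\ln R}{\ln|y|\,(1+O(r/|y|))}\,\IP_x[\htau(y,r)<\infty].
\]
The hitting probability on the right is then computed by a two-scale decomposition of the (transient) conditioned BM: it must first ``come down'' to the level $|y|$, which by~\eqref{escape_condBM} happens with probability $\ln|y|/\ln R$; once on $\partial\B(|y|)$, a local analysis of the two-obstacle problem (with obstacles $\B(1)$ and $\B(y,r)$) contributes a further factor $\ln|y|/(\ln r^{-1}+\ell_y+\ln|y|+O(\cdot))$. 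Here $\ln r^{-1}$ is the local log-capacity contribution of the target disk, and $\ell_y$ arises naturally as the expected log-distance from $y$ to the entry point of an (unconditioned) BM on $\partial\B(1)$. Letting $R\to\infty$, the first factor cancels one $\ln R$ and yields $p$ to leading order equal to $\ln|y|/(\ln r^{-1}+\ell_y+\ln|y|)$.

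The main obstacle is the local two-obstacle computation giving the second factor above: one must verify that the decomposition of $\IP_x[\htau(y,r)<\infty]$ into ``coming down to level $|y|$'' followed by ``local hitting'' is indeed approximately multiplicative. This requires a version of Lemma~\ref{l_cond_entrance_m} recentered at $y$, in which the escape region sits at distance $|y|-1$ (the distance from $y$ to $\partial\B(1)$) rather than being a disk centered at the origin. This is precisely the source of the error term $O\bigl(\tfrac{r}{|y|-1}\ln\tfrac{|y|-1}{r}\bigr)$ in the denominator of the statement. Once this multiplicative decomposition is established, multiplying $p\bar L$ and carefully combining the various error contributions produces the claimed formula.
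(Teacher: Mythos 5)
The paper's own ``proof'' of Lemma~\ref{l_cap_smalldisk} is a one-line citation to Lemma~3.11(iii) of~\cite{CP20}, so your argument is necessarily a genuine derivation rather than a comparison with an in-paper computation, and that is a perfectly legitimate thing to attempt. Your starting point is correct: since $\ln|\cdot|\equiv 0$ on $\partial\B(1)$, the definition~\eqref{df_Br_cap} reduces $\capa(\B(1)\cup\B(y,r))$ to $\tfrac{2}{\pi}\,p\,\bar L$ with $p=\hm_A(\partial\B(y,r))$ and $\bar L$ the conditional mean of $\ln|\cdot|$. The mean-value property giving $\bar L\approx\ln|y|$, the $h$-transform identity relating $\IP_x[\tau(y,r)<\tau(1)]$ to $\IP_x[\htau(y,r)<\infty]$, and the identification of $\ell_y$ as the expected log-distance from $y$ to the BM's entry point on $\partial\B(1)$, are all on target and explain every term in the leading-order formula.

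Where the argument is not yet a proof is in the error bookkeeping, and there is at least one place where the attribution of errors is likely wrong, not merely unfinished. You ascribe the numerator error $O\big(\tfrac{r(1+|\ln r|+\ln|y|)}{|y|}\big)$ entirely to $\bar L$, but if the conditional entrance measure on $\partial\B(y,r)$ is uniform up to a TV-distance $O(\eps)$ and the oscillation of $\ln|\cdot|$ on that circle is $O(r/|y|)$, the resulting error in $\bar L$ is $O(\eps\, r/|y|)$ with no logarithmic factor coming from $\bar L$ alone; a factor $|\ln r|$ cannot appear there. That factor must instead come from the \emph{numerator} of the hitting-probability $p$, which you tacitly write as exactly $\ln|y|$ divided by the denominator --- in fact $p$ has its own numerator correction, and $\ln|y|\cdot O(\text{num err of }p)$ is where the $|\ln r|\ln|y|$ contribution in the displayed numerator error presumably lives. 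Relatedly, the ``two-obstacle'' multiplicative decomposition that you flag as the main obstacle is exactly the step that produces the denominator $\ln r^{-1}+\ell_y+\ln|y|$ together with its $O\big(\tfrac{r}{|y|-1}\ln\tfrac{|y|-1}{r}\big)$ correction; it needs a recentred version of Lemma~\ref{l_cond_entrance_m} with a non-centred escape region, and until that is written out (with the $\ln R$ factors cancelled carefully as $R\to\infty$) the denominator error term is asserted rather than derived. So: right plan, correct leading-order identifications, but the quantitative error analysis --- the actual content of the lemma --- is both incomplete and, in the one place it is explicit, misattributed.
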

\begin{proof}
 It is a reformulation of Lemma~3.11~(iii) of~\cite{CP20}.
\end{proof}

In the following subsections, we collect some
``more advanced'' auxiliary results about the
conditioned Brownian motion.

\subsection{Conditioned Brownian motion: martingales
and hitting probabilities}
\label{s_further_cond_BM}
Now, we will need some facts about hitting probabilities
for conditioned Brownian motions; however, to be able to 
estimate these probabilities,
we first develop a method which is ``cleaner'' than
the one used, e.g.\ in Lemma~3.7 of~\cite{CP20}.
\begin{lem}
\label{l_mart_hW}
 Assume that, for some open set $\Lambda\subset\R^2$
 such that $\Lambda\cap\B(1)=\emptyset$,
a function $g:\Lambda \mapsto\R$ is harmonic.
Then $g(\hW)/\ln|\hW|$ is a local martingale.
\end{lem}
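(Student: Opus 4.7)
The plan is to apply It\^o's formula directly to $F(\hW_t)$, where $F(x):=g(x)/h(x)$ with $h(x):=\ln|x|$, using the SDE \eqref{differential_W}, and verify that the drift vanishes. Conceptually this is a manifestation of the standard Doob $h$-transform principle: since $\hW$ is the $h$-transform of (killed) Brownian motion with respect to the harmonic function $h$, a function $F$ is space-time harmonic for the generator of $\hW$ if and only if $Fh$ is so for the generator of killed Brownian motion; here $Fh=g$ is harmonic by hypothesis.

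Concretely, I would first note that on $\Lambda$ we have $|x|>1$ (since $\Lambda\cap\B(1)=\emptyset$), so $h$ is strictly positive and smooth there, and moreover $\Delta h=0$ and $\nabla h(x)=x/|x|^2$ (so that $\nabla h/h$ coincides with the drift coefficient in \eqref{differential_W}). Using the quotient rule and the assumed harmonicity $\Delta g=0$ on $\Lambda$, one computes
\[
\nabla F=\frac{\nabla g}{h}-\frac{g\nabla h}{h^2},
\qquad
\tfrac{1}{2}\Delta F=-\frac{\nabla g\cdot\nabla h}{h^2}+\frac{g\,|\nabla h|^2}{h^3}.
\]

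Next, I would apply It\^o's formula to $F(\hW_t)$ using \eqref{differential_W}, so that the drift of $F(\hW_t)$ is
\[
\nabla F(\hW_t)\cdot\frac{\nabla h(\hW_t)}{h(\hW_t)}+\tfrac12\Delta F(\hW_t).
\]
Substituting the two expressions above, the four resulting terms group into two pairs that cancel exactly, leaving only the martingale part $\nabla F(\hW_t)\cdot dW_t$. To turn this into a rigorous local martingale statement, I would localize by an increasing sequence of stopping times $\sigma_n:=\inf\{t:\hW_t\notin K_n\}$, where $(K_n)$ is a sequence of compact subsets exhausting $\Lambda$; on each $K_n$ the function $F$ and its derivatives are bounded and the harmonicity computation above is valid.

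There is no genuine obstacle. The only points requiring a small amount of care are (i) checking that the hypothesis $\Lambda\cap\B(1)=\emptyset$ really keeps us away from both the singularity of $h$ at the origin and the zero set of $h$ on $\partial\B(1)$, which is automatic, and (ii) handling the localization so that the statement is about a local martingale rather than a true martingale (no global integrability is asserted, nor needed for the intended applications to hitting probabilities via optional stopping).
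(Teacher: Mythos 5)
Your proof is correct, but it follows a genuinely different route from the paper's. The paper argues directly from the $h$-transform kernel relation \eqref{df_hat_p}: for $x\in\Lambda$, a compact $G\subset\Lambda$ containing $x$, $\tau$ the exit time of $G$, and $t>0$, one has
\[
\IE_x \frac{g(\hW_{t\wedge\tau})}{\ln|\hW_{t\wedge\tau}|}
= \frac{1}{\ln|x|}\,\IE_x\, g(W_{t\wedge\tau})
= \frac{g(x)}{\ln|x|},
\]
the first equality because trajectories stopped inside $G$ never touch $\B(1)$ (so the killed kernel $p_0$ agrees with $p$ there, and the ratio $\ln|\hW_{t\wedge\tau}|/\ln|x|$ is precisely the Radon--Nikodym factor), and the second because $g(W)$ is a local martingale. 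This is shorter and avoids any regularity discussion, since the only input is the martingale property of $g(W)$. Your argument instead applies It\^o's formula to $F=g/h$ with $h=\ln|\cdot|$, uses $\nabla h/h$ equal to the drift in \eqref{differential_W}, and verifies by the quotient-rule computation that the four drift terms cancel pairwise; the localization by exit times of compacts $K_n\subset\Lambda$ handles the local-martingale qualification. Both are sound. Your computation buys explicitness and would generalize to verifying the local-martingale property when $\hW$ is given only by an SDE rather than by a kernel formula; the paper's argument buys brevity and makes the $h$-transform mechanism transparent. One small remark: your use of It\^o needs $g\in C^2$, but this is automatic since harmonic functions are smooth, and you implicitly rely on it -- it is worth stating.
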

\begin{proof}
Let~$x\in\Lambda$ and consider any bounded closed subset~$G$
of~$\Lambda$ that contains~$x$ in its interior. 
Let $\tau$ be the hitting time of~$\Lambda\setminus G$
and let~$t>0$.
Now, recall~\eqref{df_hat_p} and write
(note that none of the trajectories up to time $t\wedge \tau$
 can touch the unit disk and therefore be killed)
\[
\IE_x \frac{g(\hW_{t\wedge \tau})}{\ln |\hW_{t\wedge \tau}|}    
 = \frac{1}{\ln|x|} \IE_x g(W_{t\wedge \tau})
 = \frac{g(x)}{\ln|x|}
\]
since $g(W)$ is a local martingale, so (since $\hW$ is also Markovian) we obtain that $\frac{g(\hW)}{\ln|\hW|}$ is a local martingale.
\end{proof}

Next, we know
(see e.g.\ Theorem~3.44 of~\cite{MP10})
 that for $x\notin \B(1)$ and~$z\in \partial\B(1)$ 
\begin{equation}
\label{Poisson_kernel}
 H(x,z) = \frac{|x|^2-1}{2\pi|z-x|^2}
\end{equation}
is the Poisson kernel on $\R^2\setminus \B(1)$,
i.e., the density of the entrance measure 
to~$\B(1)$ when starting at~$x$.
Define for $x,y\notin\B(1)$, $x\neq y$,
\begin{align}
 L(x,y) &=  \frac{1}{\ln|x|}
 \bigg(\ln|x|-\ln|x-y|+\int_{\partial\B(1)}\ln|z-y|
 H(x,z)\, dz\bigg)
 \label{cont_mod_GF} \\
&= 1 +  \frac{1}{\ln|x|}
 \bigg(-\ln|x-y|
 +\frac{|x|^2-1}{2\pi} \int_{\partial\B(1)}
 \frac{\ln|z-y|}{|z-x|^2} 
 \, dz\bigg),
\label{cont_mod_GF_explicit}
\end{align}
Writing $x=x_0(1+\delta)$ with $x_0\in\partial \B(1)$ and $\delta>0$, this definition can be rewritten as 
\begin{align}
L(x,y) 
& = 1 + \frac{1}{\ln|x|}
\bigg( -\ln\frac{|x-y|}{|x_0-y|}
 + \frac{|x|^2-1}{2\pi} \int_{\partial\B(1)} 
 \frac{\ln\frac{|z-y|}{|x_0-y|}} {|z-x|^2}\, dz \bigg) 
 \nonumber  \\
& = 1 + \frac{1}{\ln|x|}
\Bigg( -\ln\frac{|x-y|}{|x_0-y|}
 + \frac{|x|^2-1}{2\pi} \int_0^\pi
 \frac{\ln\frac{|z^{(\phi)}-y||z^{(-\phi)}-y|}
 {|x_0-y|^2}}
  {|z^{(\phi)}-x|^2}\, d\phi \Bigg), 
\label{integral_0_pi}
\end{align}
where 
$z^{(\phi)}=x_0 e^{i\phi}$.
To cover the case $\delta=0$, 
define for $x_0\in \partial \B(1), y\notin \B(1)$ (denoting by $(\cdot, \cdot)$ the scalar product in $\R^2$),
%% in fact $(y-x_0,y)>0$, but it's unclear
%%  if it can be directly usefult to us
\begin{align}
 L(x_0,y) &= \frac{(y-x_0,y)}{|x_0-y|^2}
  + \frac{1}{\pi} 
 \int_0^\pi
 \frac{\ln\frac{|z^{(\phi)}-y||z^{(-\phi)}-y|}{|x_0-y|^2}}
  {|z^{(\phi)}-x_0|^2}\, d\phi.
\label{cont_mod_GF_explicit_circle}
\end{align}

Then, partly as a consequence of Lemma~\ref{l_mart_hW}, 
we have
\begin{prop}
\label{p_L_properties}
 For any fixed~$y\notin\B(1)$, it holds that
\begin{itemize}
 \item[(i)] the function 
 $\big(\R^2\setminus(\B(1)\cup\{y\})\big)
 \cup\partial\B(1) \to \R, x\mapsto L(x, y)$ is continuous; %\R^2 \setminus \mathring{\widehat{B(1)}}
 \item[(ii)] the process $L(\hW_t,y)$ is a local martingale;
 \item[(iii)] $L(x,y)\to 0$ as $x\to \infty$;
 \item[(iv)] for any fixed~$r>0$, $L(x,y)$
 is uniformly bounded in $x\in\R^2\setminus(\B(1)\cup\B(y,r))$;
 \item[(v)] $L(x,y)>0$ for all 
 $x\in (\R^2\setminus\B(1))\cup\partial\B(1)$, $x\neq y$.
\end{itemize}
\end{prop}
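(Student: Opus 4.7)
The strategy hinges on a single observation that makes parts (ii)--(iv) almost immediate. Write
\[
g(x) \;:=\; \ln|x|\cdot L(x,y) \;=\; \ln|x| - \ln|x-y| + \int_{\partial\B(1)} \ln|z-y|\, H(x,z)\,dz,
\]
so that $g$ is harmonic on $\Lambda:=\R^2\setminus(\B(1)\cup\{y\})$ as a sum of three harmonic pieces: $\ln|x|$ is harmonic on all of $\R^2\setminus\{0\}$, $-\ln|x-y|$ is harmonic away from $y$, and the Poisson integral is harmonic outside $\B(1)$. Lemma~\ref{l_mart_hW} applied to $\Lambda$ then gives (ii): the process $L(\hW_t,y)=g(\hW_t)/\ln|\hW_t|$ is a local martingale, since $\hW$ stays in $\Lambda$ almost surely -- it avoids $\B(1)$ by construction and avoids the single point $y$ by polarity of points in dimension two.

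For (iii), as $|x|\to\infty$, one has $\ln|x|-\ln|x-y|=-\ln|1-y/x|=O(|x|^{-1})$, and since $H(x,z)=\tfrac{1}{2\pi}+O(|x|^{-1})$ uniformly in $z\in\partial\B(1)$, the Poisson integral converges to $\tfrac{1}{2\pi}\int_{\partial\B(1)}\ln|z-y|\,dz$, which equals $\ln|y|$ for $|y|\geq 1$ (classical Fourier computation on the circle). Thus the numerator $g(x)$ is bounded while $\ln|x|\to\infty$, so $L(x,y)\to 0$. Part (iv) then follows from (i) and (iii): on $\{|x|\geq 1\}\setminus\B(y,r)$, the function $L(\cdot,y)$ is continuous and has limit $0$ at infinity, hence is bounded by compactness (after one-point compactification at infinity).

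The genuine technical work is (i), specifically continuity across $\partial\B(1)$. On the open set $\{|x|>1,\,x\neq y\}$ continuity is clear from~\eqref{cont_mod_GF_explicit}. For $x_0\in\partial\B(1)\setminus\{y\}$, I would parametrize $x=x_0(1+\delta)$ with $\delta\downarrow 0$ and use~\eqref{integral_0_pi}. Here $\ln|x|=\delta+O(\delta^2)$; Taylor expansion gives $-\ln(|x-y|/|x_0-y|)=\delta\,((x_0,y)-1)/|x_0-y|^2+O(\delta^2)$, so the first two pieces of~\eqref{integral_0_pi} combine to $1+((x_0,y)-1)/|x_0-y|^2 = (y-x_0,y)/|x_0-y|^2$ in the limit, matching the first term of~\eqref{cont_mod_GF_explicit_circle}. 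For the integral piece, $(|x|^2-1)/\ln|x|\to 2$, while the $\phi$-integrand is controlled by the bounds $\ln(|z^{(\phi)}-y||z^{(-\phi)}-y|/|x_0-y|^2)=O(\phi^2)$ near $\phi=0$ and $|z^{(\phi)}-x|^2\gtrsim\phi^2+\delta^2$ uniformly in $\delta$, yielding a $\delta$-independent integrable dominating function; dominated convergence produces the integral in~\eqref{cont_mod_GF_explicit_circle}.

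Finally, for (v), apply the maximum principle to $g$ itself. The boundary data are $g\equiv 0$ on $\partial\B(1)$ (the Poisson integral is continuous up to the boundary and evaluates to $\ln|x_0-y|$ there), $g\to\ln|y|\geq 0$ at infinity (part (iii)), and $g\to +\infty$ at $y$. On an annular truncation $\{1<|x|<R\}\setminus\B(y,\varepsilon)$, the boundary minimum is $-o(1)$ as $R\to\infty$, so $g\geq 0$ throughout $\Lambda$; strong maximum principle then forces $g>0$ in the open interior, giving $L(x,y)>0$ for $|x|>1$. For $x_0\in\partial\B(1)\setminus\{y\}$, Hopf's boundary point lemma applied to $g$ from the exterior of $\B(1)$ yields a strictly positive radial derivative at $x_0$, and a single application of L'Hopital identifies $L(x_0,y)$ with this derivative (the denominator $\partial_\delta\ln|x_0(1+\delta)|$ equals $1$ at $\delta=0$), proving strict positivity on $\partial\B(1)$. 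The main obstacle is the continuity at $\partial\B(1)$ in (i) -- specifically, verifying that the expansion and dominated convergence argument reproduce exactly~\eqref{cont_mod_GF_explicit_circle}; all other items are essentially corollaries of the harmonicity of $g$.
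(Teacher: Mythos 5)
Your argument is correct, and for parts (ii)--(iv) it coincides with the paper's (the paper proves (ii) identically via Lemma~\ref{l_mart_hW} and leaves (iii), (iv) as ``straightforward''; your compactification argument for (iv) and the mean-value computation $\frac{1}{2\pi}\int_{\partial\B(1)}\ln|z-y|\,dz=\ln|y|$ supply the missing details). The genuine divergence is in part (v): you use a purely analytic route (minimum principle for $g=\ln|x|\cdot L(\cdot,y)$ on a truncated annulus, then Hopf's boundary point lemma plus L'Hopital to handle $\partial\B(1)$), whereas the paper instead exploits the machinery it has just set up -- (ii) makes $L(\hW_t,y)$ a bounded local martingale, (iii) gives the value $0$ at infinity, and a direct check gives $L>1$ on a small circle around $y$, so optional stopping yields $L(x,y)=\IP_x[\htau(y,r)<\infty]\,\IE_x[L(\hW_{\htau(y,r)},y)\mid\htau(y,r)<\infty]>0$. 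The two proofs of (v) buy different things: the paper's is shorter and thematically tied to the martingale it is advertising, and works for boundary points $x_0\in\partial\B(1)$ without any extra input; yours is self-contained PDE and identifies $L(x_0,y)$ with the Hopf derivative of $g$, which is a nice piece of extra structure, but requires invoking (i) and a reflection/regularity fact so that Hopf's lemma and L'Hopital are legitimate at $\partial\B(1)$.

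One small caution about (i): the paper is careful to show that the radial limit along $x=x_0(1+\delta)$ is \emph{uniform} in $x_0\in\partial\B(1)$, and that $L(x_0,y)$ as defined by \eqref{cont_mod_GF_explicit_circle} is continuous in $x_0$; only the conjunction of these two facts gives continuity of the extended function on $\{|x|\geq 1\}\setminus\{y\}$ (an arbitrary approach to $x_0$ need not be radial from a fixed base point). Your sketch establishes the radial limit and matches it with \eqref{cont_mod_GF_explicit_circle}, but does not explicitly address uniformity; this should be added (your dominated-convergence estimate $|z^{(\phi)}-x|^2\gtrsim\phi^2+\delta^2$, with the implicit constant independent of $x_0$, in fact delivers it). Alternatively, since $g$ vanishes on $\partial\B(1)$ and is real-analytic across it by Schwarz reflection, and $\ln|x|$ likewise vanishes to exactly first order, the quotient $L=g/\ln|x|$ has a removable singularity on $\partial\B(1)$ and (i) follows without the integral-by-integral expansion at all.
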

\begin{proof} 
We fix $y\notin \B(1)$ once and for all. 
We begin with the proof of (i). 
The continuity outside of $\B(1)$ is immediate. Let us prove that for all $x_0\in\partial\B(1)$,
\begin{equation}
\label{L_boundary}
 \lim_{x\to x_0} L(x,y) 
 = \frac{(y-x_0,y)}{|x_0-y|^2}
  + \frac{1}{\pi} 
 \int_0^\pi
 \frac{\ln\frac{|z^{(\phi)}-y||z^{(-\phi)}-y|}{|x_0-y|^2}}
  {|z^{(\phi)}-x_0|^2}\, d\phi.
\end{equation}
To prove that, we assume that $x=(1+\delta)x_0$
and then take the limit $\delta\downarrow 0$
with the help of~\eqref{integral_0_pi};
we will prove that convergence uniformly in~$x_0$, together with the continuity in $x_0$ of $L(x_0, y)$, and \eqref{L_boundary} will follow by elementary arguments. We proceed term by term.
First,
we compute that for fixed $x_0\in \partial \B(1)$, we have 
\begin{equation}
\ln |(1+\delta) x_0| = \ln (1+\delta) =\delta +O(\delta^2)
\end{equation}
and
\begin{align}
 \ln\frac{|(1+\delta)x_0-y|}{|x_0-y|}
& = \frac12 \ln\frac{|x_0-y + \delta x_0|^2}{|x_0-y|^2}
\nonumber\\
 & = \frac12 \ln\Big(1+ 2 \delta \cdot\frac{(x_0-y,x_0)}{|x_0-y|^2} + \delta^2 \cdot \frac{1}{|x_0-y|^2} \Big)
 \label{ln_x_x0ln}\\
 &= \delta \cdot 
  \frac{(x_0-y,x_0)}{|x_0-y|^2} + O(\delta^2).
 \label{ln_x_x0}
\end{align}
We derive that, for any $x_0\in\partial \B(1)$, when $x=x_0(1+\delta)$, we have 
\begin{equation}
\label{limln_x_x0}
1-\lim_{\delta\to 0} \frac{1}{\ln |x|} %\cdot
\ln\frac{|x-y|}{|x_0-y|}  
= 1 - \frac{(x_0-y, x_0)}{|x_0-y|^2}=\frac{(y-x_0,y)}{|x_0-y|^2}.
\end{equation}
We argue that this convergence is in fact 
uniform in~$x_0$. We observe that the coefficients
$\frac{(x_0-y,x_0)}{|x_0-y|^2}$ and $\frac{1}{|x_0-y|^2}$ appearing in~\eqref{ln_x_x0ln} are bounded, 
respectively by $\frac{|y|+1}{(|y|-1)^2}$ 
and $ \frac1{(|y|-1)^2}$. Using the fact that 
$|\ln(1+u)-u|\le 2 u^2$ for $u\in[-\frac12, \frac12]$, 
we see that the~$O(\delta^2)$ in~\eqref{ln_x_x0} is uniform in~$x_0$, 
and thus, so is the convergence in~\eqref{limln_x_x0}. 

We also compute $\lim_{\delta\to 0} \frac{|x|^2-1}{\ln |x|} = 2$ and see that this limit is uniform 
in~$x_0$ (in fact, $\frac{|x|^2-1}{\ln |x|}$ is independent of~$x_0$).

Finally, let us argue using the dominated convergence 
theorem that, still with $x=(1+\delta)x_0$,
\begin{equation}
\label{eq:convint}
  \int_0^\pi
\frac{\ln\frac{|z^{(\phi)}-y||z^{(-\phi)}-y|}{|x_0-y|^2}}
  {|z^{(\phi)}-x|^2}\, d\phi
  \underset{\delta\to 0}{\longrightarrow}\int_0^\pi
\frac{\ln\frac{|z^{(\phi)}-y||z^{(-\phi)}-y|}{|x_0-y|^2}}
  {|z^{(\phi)}-x_0|^2}\, d\phi,
\end{equation}
uniformly in~$x_0$.
For the moment, let us reason with fixed~$x_0$. 
It is clear that the integrand in the left-hand side 
of~\eqref{eq:convint} converges pointwise towards the integrand in the right-hand side of~\eqref{eq:convint}. Observing that $|x_0-z^{(\phi)}|\le |x-z^{(\phi)}|$, we see that the integrand in the right-hand side
of~\eqref{eq:convint} dominates the integrand in the left-hand side of~\eqref{eq:convint}. Let us check that the integrand in the right-hand side 
of~\eqref{eq:convint} is integrable; 
the only difficulty is around~$0$.
On the one hand, as $\phi \to 0$:
\begin{align}
\frac{|z^{(\phi)}-y|^2}{|x_0-y|^2} 
& = \frac{|x_0-y+x_0(e^{i \phi}-1)|^2}{|x_0-y|^2} \nonumber \\
& = 1 + 2\frac{(x_0-y, x_0 (e^{i \phi}-1))}{|x_0-y|^2} 
+ \frac{|e^{i \phi}-1|^2}{|x_0-y|^2}
\nonumber\\
& = 1 + 2 \phi \frac{(x_0-y, x_0 i)}{|x_0-y|^2} + 2\frac{(x_0-y, x_0 (e^{i \phi}-1-i\phi))}{|x_0-y|^2} + \frac{|e^{i \phi}-1|^2}{|x_0-y|^2} 
\label{eqlnratioexact}\\
& = 1 + 2 \phi \frac{(x_0-y, x_0 i)}{|x_0-y|^2}
+O(\phi^2),
\label{eqlnratioO}
\end{align}
so
\begin{align}
\ln \frac{|z^{(\phi)}-y||z^{(-\phi)}-y|}{|x_0-y|^2} 
&= \frac12 \ln\bigg(\frac{|z^{(\phi)}-y|^2}{|x_0-y|^2}\times \frac{|z^{(-\phi)}-y|^2}{|x_0-y|^2}\bigg) 
\nonumber\\
&= \frac12 \ln ( 1 + O(\phi^2))=O(\phi^2).
\label{eq:lnquO}
\end{align}
On the other hand, 
$|z^{(\phi)}-x_0|^2=|e^{i\phi}-1|^2=\phi^2 + O(\phi^3)$.
We obtain that the integrand on the right-hand side 
of~\eqref{eq:convint} is bounded around~$0$, 
thus, it is integrable. 
The dominated convergence theorem yields the convergence in~\eqref{eq:convint}, for fixed~$x_0$. 

To conclude, we need to show that this convergence is uniform in~$x_0$. We are simply going to refine the above arguments by showing that our estimates are uniform.
Let us denote
\begin{align}
f(\phi, \delta)
& :=  \sup_{x_0\in\partial \B(1)} 
\Bigg| 
 \frac {\ln\frac{|z^{(\phi)}-y||z^{(-\phi)}-y|}{|x_0-y|^2}}
       {|z^{(\phi)}-x_0|^2}
 -
\frac {\ln\frac{|z^{(\phi)}-y||z^{(-\phi)}-y|}{|x_0-y|^2}}
       {|z^{(\phi)}-x|^2}
 \Bigg|\nonumber\\
& \ = \Big(1-\frac{|e^{i \phi}-1|^2}{|e^{i\phi}-(1+\delta)|^2}\Big) \times g(\phi) \label{deff(phi, delta)},
\end{align}
with 
\begin{equation}\label{defg(phi)}
g(\phi)
:= 
\frac {\sup_{x_0\in\partial \B(1)} \big|{\ln\frac{|z^{(\phi)}-y||z^{(-\phi)}-y|}{|x_0-y|^2}}\big|}
      {|e^{i\phi}-1|^2}.
\end{equation}

Let us show that~$g(\phi)$ is bounded: the numerator on the right-hand side of \eqref{defg(phi)} is clearly bounded, so we only have to check that~$g$ does not diverge at~$0$. This comes from~\eqref{eqlnratioexact}, \eqref{eqlnratioO} and~\eqref{eq:lnquO}: observe that the coefficients in~\eqref{eqlnratioexact} are bounded 
in~$x_0$. Using the fact that $|\ln(1+u)-u|\le 2 u^2$ for $u\in[-\frac12, \frac12]$, it is
straightforward to establish that the~$O(\phi^2)$ in~\eqref{eq:lnquO} is uniform in~$x_0$, 
hence~$g$ is bounded around~$0$.

In view of~\eqref{deff(phi, delta)}, we see 
that $f(\cdot, \delta)$ is dominated by~$g$ and converges pointwise towards~$0$. The dominated convergence theorem then yields
$\int_0^\pi f(\phi, \delta)d \phi \underset{\delta\to 0}{\longrightarrow} 0$. 
The uniform convergence in~\eqref{eq:convint} follows by exchanging the integration with the supremum and the absolute value.

From the boundedness of~$g$, we also derive that the right-hand side of~\eqref{eq:convint}, hence also $L(x_0, y)$, is continuous in~$x_0$, as desired.
This concludes the proof of item~(i).

Next, item~(ii)
follows from Lemma~\ref{l_mart_hW} (recall that
the Poisson kernel is harmonic with respect to
its first variable). Items~(iii) and~(iv) are straightforward to obtain.

Finally, item~(v) follows from the optional stopping theorem:
a direct computation implies that $L(z,y)>1$ for all $z\in \partial\B(y,r)$ with small enough~$r$. Then,
the optional stopping theorem together 
with~(ii) and~(iii) implies that
\[
L(x,y) = \IP_x[\htau(y,r)<\infty] 
  \IE_x\big(L(\hW_{\htau(y,r)},y)
     \mid \htau(y,r)<\infty\big)>0.
\]
This concludes the proof of
Proposition~\ref{p_L_properties}.
\end{proof}

In the next result, we
are interested in the regime when the radius of the disk (to be hit)
is small, and the starting point is also close to the disk.
\begin{lem}
\label{l_cond_escape_or_hit}
Consider some fixed $\delta_0>0$. For every $x,y \notin B(1)$ such that 
$|x-y|<\frac{1}{2}$
% was 1 before
% $r<|y|-1$, - this is redundant
and
% $\min\{|x|,|y|-r\}\geq 1+\delta_0$ 
$|x-y|\leq (|y|-1)^{1+\delta_0}$, we have, uniformly for all $r \in (0, |x-y|)$:
\begin{align}
\IP_x\big[\htau(y,r)<\infty\big] &=  \frac{\ln|x-y|^{-1}
 +\ell_y +\ln|y|}{\ln r^{-1} +\ell_y +\ln|y|}
 \Big(1+O\big(\tfrac{|x-y|\tell_y}{(|y|-1)
  (\ln|x-y|^{-1}+\ln|y|)}\big)\Big),
  \label{eq_cond_hit}\\
\IP_x\big[\htau(y,r)=\infty\big] &=   
\frac{\ln \tfrac{|x-y|}{r}}{\ln r^{-1} +\ell_y +\ln|y|}
 + O\big(\tfrac{|x-y|\tell_y}{(|y|-1)
  (\ln|x-y|^{-1}+\ln|y|)}\big).
% \Big(1+O\big(\tfrac{|x-y|}{|y|-1}\big)\Big).
 \label{eq_cond_escape}
\end{align}
\end{lem}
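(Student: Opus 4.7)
My strategy is to apply the optional stopping theorem to the local martingale $L(\hW_t, y)$ given by Proposition~\ref{p_L_properties}(ii), and then to carry out careful asymptotic expansions of $L(\,\cdot\,, y)$ near~$y$ using the defining formula~\eqref{cont_mod_GF}.

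First I would stop at $\tau_R := \htau(y, r) \wedge \htau(R)$ for large $R$. By items~(iii)--(iv) of Proposition~\ref{p_L_properties}, $L(\,\cdot\,, y)$ is bounded on $\R^2\setminus(\B(1)\cup\B(y,r))$ and tends to~$0$ at infinity, so the stopped process is a bounded (hence uniformly integrable) martingale. Letting $R\to\infty$ and using transience of $\hW$ (so that on $\{\htau(y,r)=\infty\}$ we have $|\hW_t|\to\infty$ and the corresponding contribution vanishes by~(iii)), dominated convergence yields the key identity
\begin{equation*}
L(x, y) \;=\; \IP_x\bigl[\htau(y, r)<\infty\bigr]\cdot \IE_x\bigl[L(\hW_{\htau(y, r)}, y)\,\bigm|\,\htau(y, r)<\infty\bigr].
\end{equation*}
Everything else reduces to asymptotic expansions of $L(x', y)$ for $x'$ in a neighbourhood of~$y$.

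Writing $\ell_y^{(x)} := \int_{\partial\B(1)} \ln|z-y|\, H(x, dz)$, formula~\eqref{cont_mod_GF} reads $\ln|x|\cdot L(x, y) = \ln|x| - \ln|x - y| + \ell_y^{(x)}$. The crux is then to establish, under our hypotheses, that
\begin{equation*}
\ell_y^{(x)} \;=\; \ell_y + O\Bigl(\tfrac{|x-y|\,\tell_y}{|y|-1}\Bigr).
\end{equation*}
I would do this using the explicit Poisson kernel~\eqref{Poisson_kernel} to expand $H(x,z) - H(y,z)$ to first order in $x-y$, tracking separately the contribution of $z\in\partial\B(1)$ close to the point of $\partial\B(1)$ nearest to~$y$ (where $H(y,\,\cdot\,)$ concentrates when $|y|-1$ is small). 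Combined with $\ln|x| = \ln|y| + O(\tfrac{|x-y|}{|y|})$ from~\eqref{diff_logs}, this yields
\begin{equation*}
L(x, y) \;=\; \frac{\ln|x-y|^{-1}+\ell_y+\ln|y|}{\ln|y|}\Bigl(1+O\bigl(\tfrac{|x-y|\,\tell_y}{(|y|-1)(\ln|x-y|^{-1}+\ln|y|)}\bigr)\Bigr).
\end{equation*}

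Applying the very same estimates at an arbitrary $z\in\partial\B(y,r)$ (where $|z-y|=r$, and both $|z|$ and $|z|-1$ are comparable to $|y|$ and $|y|-1$ respectively, thanks to $r\leq|x-y|\leq(|y|-1)^{1+\delta_0}$) gives uniformly in such~$z$
\begin{equation*}
L(z, y) \;=\; \frac{\ln r^{-1}+\ell_y+\ln|y|}{\ln|y|}\Bigl(1+O\bigl(\tfrac{|x-y|\,\tell_y}{(|y|-1)(\ln r^{-1}+\ln|y|)}\bigr)\Bigr).
\end{equation*}
Plugging both expansions into the optional stopping identity and taking the ratio yields~\eqref{eq_cond_hit}; then~\eqref{eq_cond_escape} follows by writing $\IP_x[\htau(y,r)=\infty] = 1 - \IP_x[\htau(y,r)<\infty]$, the multiplicative error in~\eqref{eq_cond_hit} turning into an absolute one of the claimed form since $\ln|x-y|^{-1}+\ell_y+\ln|y|\leq \ln r^{-1}+\ell_y+\ln|y|$. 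The main obstacle will be the uniform control of $\ell_y^{(x)}-\ell_y$ when $|y|-1$ is small: there $H(y,\,\cdot\,)$ concentrates sharply near the closest point of $\partial\B(1)$ to~$y$ while $\ln|z-y|$ simultaneously grows large at that same point, so a crude Taylor bound on $H(x,\,\cdot\,)-H(y,\,\cdot\,)$ is not sufficient and the integral over $\partial\B(1)$ must be split carefully. The hypothesis $|x-y|\leq(|y|-1)^{1+\delta_0}$ is exactly what keeps this interplay under control at the claimed order $\tell_y/(|y|-1)$.
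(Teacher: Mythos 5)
Your strategy matches the paper's exactly: apply the optional stopping theorem to the local martingale $L(\hW_t,y)$ of Proposition~\ref{p_L_properties} and reduce everything to asymptotics of $L(\cdot,y)$ near $y$. The one place where you are anticipating more work than necessary is the estimate $\ell_y^{(x)}=\ell_y+O\bigl(\tfrac{|x-y|\tell_y}{|y|-1}\bigr)$: rather than an additive Taylor expansion of $H(x,\cdot)-H(y,\cdot)$ with a careful split of $\partial\B(1)$ near the point closest to $y$, the paper first deduces from $|x-y|\le(|y|-1)^{1+\delta_0}$ (and $|x-y|<\tfrac12$) that $\tfrac{|x-y|}{|y|-1}$ and $\tfrac{|y|-1}{|x|-1}$ are bounded, and then uses the explicit formula~\eqref{Poisson_kernel} to get the \emph{multiplicative} comparison $H(x,z)=H(y,z)\bigl(1+O(\tfrac{|x-y|}{|y|-1})\bigr)$ uniformly in $z\in\partial\B(1)$, which yields $|\ell_y^{(x)}-\ell_y|\le O\bigl(\tfrac{|x-y|}{|y|-1}\bigr)\int_{\partial\B(1)}|\ln|z-y||\,H(y,dz)=O\bigl(\tfrac{|x-y|\tell_y}{|y|-1}\bigr)$ with no splitting at all, since the concentration of $H(y,\cdot)$ and the blow-up of $\ln|z-y|$ are handled simultaneously by keeping the factor $H(y,z)$ inside the integral. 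This cleanly sidesteps exactly the interplay you flag as the main obstacle.
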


\begin{proof}
Instead of relying on Lemma~3.7~(iii) of~\cite{CP20},
we will use the optional stopping theorem with the martingale
provided by Proposition~\ref{p_L_properties} and the stopping
time~$\htau(y,r)$.

We first establish some estimates.
Let us show that according to our assumptions, $\frac{|x-y|}{|y|-1}$ is bounded by 1 and $\frac{|y|-1}{|x|-1}$ is bounded by a constant depending on $\delta_0$. For the first quantity, observe that when $|y|\le 2$, we have $|x-y| \le (|y|-1)^{1+\delta_0} \le |y|-1$, while $|y|>2$ we simply have $\frac{|x-y|}{|y|-1} < \frac{1}{2}$.
For the second quantity, write when $|y|\le 1.7$, 
\[|x|-1 \ge |y|-1 -|x-y| \ge |y|-1 - (|y|-1)^{1+\delta_0}\ge (|y|-1) (1-0.7^{\delta_0}),\]
and note that $|y|> 1.7$ we have $|x|-1\ge |y|-|x-y|-1 > |y|-3/2$ which yields $\frac{|y|-1}{|x|-1} \le \frac{|y|-1}{|y|-\frac32}\le \frac72$.

 From these facts, we now deduce that, under the 
conditions of the lemma,
\begin{equation}
\label{compare_Hxzy}
H(x,z)=H(y,z)\big(1+O\big(\tfrac{|x-y|}{|y|-1}\big)\big)
\end{equation}
uniformly in~$z\in\partial\B(1)$.
Indeed, using~\eqref{Poisson_kernel}, we 
write
\begin{align*}
  \frac{H(x,z)}{H(y,z)}
 & = \frac{|x|^2-1}{|y|^2-1}\cdot 
  \frac{|y-z|^2}{|x-z|^2}\\
 & = 
 \Big(1+\frac{|x-y|^2+2(y,x-y)}{|y|^2-1}\Big)
 \Big(1+\frac{|x-y|^2 + 2(x-y,x-z)}{|x-z|^2}\Big)\\
 & =: (1+T_1)(1+T_2).
\end{align*}
Note that, since $|y|\geq 1$,
 we have $|y|^2-1\ge |y|(|y|-1)$,
 and recall that $\frac{|x-y|}{|y|-1}$ is bounded;
this shows that~$T_1$  
is~$O\big(\tfrac{|x-y|}{|y|-1}\big)$.
Then, writing $ \frac{|x-y|}{|x-z|} \le \frac{|x-y|}{|x|-1} = \frac{|x-y|}{|y|-1} \frac{|y|-1}{|x|-1}$, recalling that both fractions are bounded, we also find that $T_2$ is~$O\big(\tfrac{|x-y|}{|y|-1}\big)$.
By the boundedness of $\tfrac{|x-y|}{|y|-1}$, this implies~\eqref{compare_Hxzy}.

We now consider $r\in(0, |x-y|)$. 
Notice that our assumptions imply $r<|y|-1$,
so that $\B(y,r)\cap\B(1)=\emptyset$. Indeed, if $|y|\le 2$, then $r< |x-y| \le (|y|-1)^{1+\delta_0} \le |y|-1$, while if $|y|>2$ then $|y|-1 > 1 >r$.

Recalling~\eqref{behaviour_ell_x} and using~\eqref{diff_logs}
(to substitute $\ln |x|$ or $\ln|u|$
by $\ln|y|$),
% this
we notice that~\eqref{compare_Hxzy} 
permits us to obtain 
\begin{align*}
L(x,y) &=  \frac{\ln |x-y|^{-1}
   + \ell_y + \ln|y| 
   + O\big(\tfrac{|x-y|\tell_y}{|y|-1}\big)}
 {\ln|y|+O\big(\tfrac{|x-y|}{|y|}\big)}
 \intertext{and, for any $u\in\partial\B(y,r)$}
L(u,y) &=  \frac{\ln r^{-1}
   + \ell_y + \ln|y| 
   + O\big(\tfrac{r\tell_y}{|y|-1}\big)}
 {\ln|y|+O\big(\tfrac{r}{|y|}\big)} .
\end{align*}
Now, by the assumption that $|x-y|\le (|y|-1)^{1+\delta_0}$, and lower bounding $\ell_y$ by $\ln(|y|-1)$, we have 
\[\frac{1}{1+\delta_0} \ln|x-y|^{-1} + \ell_y \ge 0 \]
hence $\ln|x-y|^{-1} + \ell_y + \ln|y| \ge \frac{\delta_0}{1+\delta_0}(\ln|x-y|^{-1}+\ln|y|)$.

We are now ready to conclude. We obtain from the optional stopping theorem
(recall 
%that our martingale vanishes at infinity
Proposition~\ref{p_L_properties}~(iii) and (iv))
% from (3.28)--(3.31) of~\cite{CP20}
\begin{align*}
% \lefteqn{
 \IP_x\big[\htau(y,r)<\infty\big]
% }\\
 &= \frac{
  \big(\ln |x-y|^{-1}
   + \ell_y + \ln|y| 
   + O\big(\tfrac{|x-y|\tell_y}{|y|-1}\big)\big)
   \big(\ln|y|+O\big(\tfrac{r}{|y|}\big)\big)
   }
 {
 \big(\ln r^{-1}
   + \ell_y + \ln|y| 
   + O\big(\tfrac{r\tell_y}{|y|-1}\big)\big)
   \big(\ln|y|+O\big(\tfrac{|x-y|}{|y|}\big)\big)
   }\\
 &= \frac{\ln|x-y|^{-1}
 +\ell_y +\ln|y|}{\ln r^{-1} +\ell_y +\ln|y|}
 \Big(1+O\big(\tfrac{|x-y|\tell_y}{(|y|-1)
  (\ln|x-y|^{-1}+\ln|y|)}\big)\Big),
\end{align*}
thus proving~\eqref{eq_cond_hit}.
Then, \eqref{eq_cond_escape}
is a direct consequence of~\eqref{eq_cond_hit}.
\end{proof}

\subsection{Traces of different conditioned
Brownian motions}
\label{s_comparing_traces}
Next, the goal is to be able to compare traces left by the Brownian 
motion conditioned on not hitting a specific (typically small) disk
and the usual conditioned (on not hitting~$\B(1)$) Brownian motion
additionally conditioned on not hitting that disk. Recall that the Brownian motion conditioned on not hitting $B(y,r)$ is denoted by $\hW^{y,r}$.
\begin{lem}
\label{l_cond_B(y,r)}
Assume that $0<r<r'$ and let $y$ be such that
$\B(y,r')\cap \B(1)=\emptyset$
(meaning that $s:=|y|-r'-1$ is strictly positive).
Let us denote also by~$\tau^*(z,r')$ the hitting time
of~$\partial\B(z,r')$ by~$\hW^{y,r}$. Then, 
for any $x\in \big(\B(y,r')\setminus \B(y,r)\big)
\cup \partial\B(y,r)$
\begin{equation}
\label{eq_cond_B(y,r)} 
\Big|\frac{d\IP_x\big[\hW_{[0,\htau(y,r')]}
\in\cdot\,\mid \htau(y,r')<\htau(y,r)\big]}
{d\IP_x\big[\hW^{y,r}_{[0,\tau^*(y,r')]}\in\cdot\,\big]}-1\Big| 
= O\big(\tfrac{r'}{s\ln s}\big). 
\end{equation}
\end{lem}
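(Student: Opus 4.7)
The plan is to realize both measures as $h$-transforms of the unconditioned Brownian motion~$W$, express them as densities against a common base measure, and explicitly compute their ratio. Take $\mathbf{Q}_x$ to be the law of $W$ started at~$x$ and stopped at the annulus-exit time $\sigma=\tau(y,r)\wedge\tau(y,r')$. Since $s=|y|-r'-1>0$, the annulus $A=\B(y,r')\setminus\B(y,r)$ is disjoint from $\B(1)$, so paths in the support of $\mathbf{Q}_x$ never touch the unit disk. The defining relation of $\hW$ (as the $h$-transform of $W$ by $\ln|\cdot|$) gives that the law of $\hW$ up to $\htau(y,r)\wedge\htau(y,r')$ has $\mathbf{Q}_x$-density $\ln|W_\sigma|/\ln|x|$; conditioning further on the event $\{\htau(y,r')<\htau(y,r)\}$ multiplies this density by $\mathbf{1}_{W_\sigma\in\partial\B(y,r')}/\IP^{\hW}_x[\htau(y,r')<\htau(y,r)]$. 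Analogously, since $\hW^{y,r}$ is the $h$-transform by $\ln(|\cdot-y|/r)$ and a.s.\ exits the annulus through its outer boundary, its law up to $\tau^*(y,r')$ has $\mathbf{Q}_x$-density $\ln(r'/r)\,\mathbf{1}_{W_\sigma\in\partial\B(y,r')}/\ln(|x-y|/r)$.

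Dividing the first density by the second, the Radon--Nikodym derivative appearing in the lemma equals
\[
\frac{\ln|W_\sigma|}{\ln|x|}\cdot\frac{\ln(|x-y|/r)}{\ln(r'/r)\cdot\IP^{\hW}_x[\htau(y,r')<\htau(y,r)]}.
\]
To evaluate the denominator, I reapply the $\hW$-density to the indicator of outer exit:
\[
\IP^{\hW}_x[\htau(y,r')<\htau(y,r)]=\tfrac{1}{\ln|x|}\,\IE^W_x\bigl[\ln|W_\sigma|\,\mathbf{1}_{W_\sigma\in\partial\B(y,r')}\bigr].
\]
On the event of outer exit, $|W_\sigma|\in[|y|-r',|y|+r']$, hence $\ln|W_\sigma|=\ln|y|+O(r'/|y|)$ uniformly; combined with the standard two-dimensional annulus-crossing probability $\IP^W_x[\tau(y,r')<\tau(y,r)]=\ln(|x-y|/r)/\ln(r'/r)$, this yields
\[
\IP^{\hW}_x[\htau(y,r')<\htau(y,r)]=\frac{\ln|y|+O(r'/|y|)}{\ln|x|}\cdot\frac{\ln(|x-y|/r)}{\ln(r'/r)}.
\]

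Substituting this back into the formula for the RN derivative produces a complete cancellation of the factors $\ln|x|$, $\ln(|x-y|/r)$ and $\ln(r'/r)$, leaving $\ln|W_\sigma|/(\ln|y|+O(r'/|y|))$; since also $\ln|W_\sigma|=\ln|y|+O(r'/|y|)$ on the support, this equals $1+O(r'/(|y|\ln|y|))$. The inequality $|y|\geq s+1$ then gives $|y|\ln|y|\geq s\ln s$ in the regime of interest, which matches the stated bound $O(r'/(s\ln s))$. The only step requiring genuine care is checking that the expansions $\ln|W_\sigma|-\ln|y|=O(r'/|y|)$ and $\ln|x|-\ln|y|=O(r'/|y|)$ are uniform over the relevant range, which follows from~\eqref{diff_logs} together with the hypothesis $s>0$; otherwise, the argument is a clean $h$-transform bookkeeping in which all the error collapses into a single explicit term.
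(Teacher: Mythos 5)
Your argument is correct and gives essentially the same conclusion, but via a genuinely different and more self-contained route than the paper's proof. The paper establishes the lemma in two citation steps: it invokes Lemma~2.1 of~\cite{CP20} to write the law of $\hW^{y,r}_{[0,\tau^*(y,r')]}$ as that of $W_{[0,\tau(y,r')]}$ conditioned on $\{\tau(y,r')<\tau(y,r)\}$, and then invokes Lemma~3.6 of~\cite{CP20} to replace $\IP_x[\hW_{[0,\htau(y,r')]}\in\Gamma]/\IP_x[\htau(y,r')<\htau(y,r)]$ by the corresponding ratio for unconditioned $W$, at the cost of the factor $1+O(r'/(s\ln s))$. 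In effect, the error estimate is delegated entirely to the cited Lemma~3.6. You instead carry out the $h$-transform bookkeeping explicitly: you write both laws as Radon--Nikodym densities against the law of $W$ stopped at the annulus exit, divide, and note that every factor except $\ln|W_\sigma|/(\ln|y|+O(r'/|y|))$ cancels exactly. This has the advantage of not importing any external lemma, and it exposes more clearly where the error comes from (the oscillation of $\ln|\cdot|$ over the outer boundary circle). Your raw bound $O(r'/(|y|\ln|y|))$ is in fact slightly stronger than the stated $O(r'/(s\ln s))$; your final step converting one to the other is stated a bit casually (you need $s$ bounded away from $0$ and $1$ so that $s\ln s>0$ and $|y|\ln|y|\geq s\ln s$, a restriction that is implicit in the paper's use of $\ln s$ in the denominator as well), but this is a harmless regime issue, not a gap in the argument.
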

\begin{proof}
First, we note that,
analogously to Section~2.1 of~\cite{CP20},
it is possible to define the diffusion~$\hW$
conditioned on not touching $\B(y,r)$ anymore
even for a starting point on $\partial\B(y,r)$.
 Since the estimates we obtain below
 (in the case $x\in \B(y,r')\setminus \B(y,r)$)
will be uniform in~$x$, it is enough to 
prove the result for $x\in \B(y,r')\setminus \B(y,r)$.

Let $\Gamma$ be a set of (finite) trajectories,
 having the following property:
a trajectory belonging to this set has to start at~$x$, 
it cannot touch~$\B(y,r)$, and it ends on its first visit 
to~$\partial\B(y,r')$.
 Then, we have (by an obvious adaptation of Lemma~2.1
of~\cite{CP20})
\begin{align*}
 \IP_x\big[\hW^{y,r}_{[0,\tau^*(y,r')]}\in\Gamma\,\big]
 & = \IP_x\big[W_{[0,\tau(y,r')]}
\in\Gamma\,\mid \tau(y,r')<\tau(y,r)\big]\\
&= \frac{\IP_x\big[W_{[0,\tau(y,r')]}
\in\Gamma\, \big]}
% \tau(y,r')<\tau(y,r)\big]}
{\IP_x\big[\tau(y,r')<\tau(y,r)\big]},
\end{align*}
and
\begin{align*}
 \IP_x\big[\hW_{[0,\htau(y,r')]}
\in\Gamma\,\mid \htau(y,r')<\htau(y,r)\big]
&= \frac{\IP_x\big[\hW_{[0,\htau(y,r')]}
\in\Gamma\, \big]}
{\IP_x\big[\htau(y,r')<\htau(y,r)\big]}\\
\intertext{\qquad 
\qquad \qquad \qquad 
\qquad \qquad \qquad 
\qquad 
\footnotesize (by Lemma~3.6 of~\cite{CP20})}
&=  \frac{\IP_x\big[W_{[0,\tau(y,r')]}
\in\Gamma\, \big]}{\IP_x\big[\tau(y,r')<\tau(y,r)\big]}
\big(1+O\big(\tfrac{r'}{s\ln s}\big)\big),
\end{align*} 
which implies~\eqref{eq_cond_B(y,r)}.
\end{proof}

The above result compares the traces left in $\B(y,r')$ by~$\hW^{y,r}$
and (conditioned)~$\hW$ before going out of $\B(y,r')$ 
for the first time;
it is important to observe that 
%Lemma~\ref{l_cond_B(y,r)}
it allows us to couple these traces with probability
close to~$1$ (when the right-hand side 
of~\eqref{eq_cond_B(y,r)} is small).
With $r':=r \ln^\theta r^{-1}$,
we also need to compare 
the full traces left in~$\B(y,r')$
by these two processes:
\begin{lem}
\label{l_cond_full_B(y,r)}
%In the setup of Lemma~\ref{l_cond_B(x,r)}, we have
Fix $\theta\geq 2$ and $\delta_0>0$.
Let~$r<1$ and $y\notin\B(1)$ be such that 
% $r\ln^4 r^{-1}<|y|-1$
% and $r\ln^4 r^{-1}<(|y|-1)^{1+\delta_0}$.
\begin{equation}
\label{r_much_smaller_y-1}
 r\ln^{2\theta} r^{-1}
 < \min\big(\tfrac{1}{2},|y|-1, (|y|-1)^{1+\delta_0}\big).
\end{equation}
Then, for any 
$x\in \big(\B(y,\ln^\theta r^{-1})\setminus\B(y,r)\big)
\cup \partial\B(y,r)$, 
we have
\begin{equation}
\label{eq_cond_full_B(x,r)} 
\Big|\frac{d\IP_x\big[\hW_{[0,\infty]}\cap
\B(y,r\ln^\theta r^{-1})
\in\cdot\,\mid \htau(y,r)=\infty\big]}
{d\IP_x\big[\hW^{y,r}_{[0,\infty]}\cap \B(y,r\ln^\theta r^{-1})
 \in\cdot\,\big]}-1\Big| 
= O\big(\tfrac{\ln\ln r^{-1}}{\ln r^{-1}+\ln|y|}
+ \tfrac{r\tell_y\ln^{2\theta}r^{-1}}{|y|-1}\big). 
\end{equation}
\end{lem}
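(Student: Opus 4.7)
The plan is to upgrade Lemma~\ref{l_cond_B(y,r)}, which handles a single inward excursion, to a coupling of the full trace inside $\B(y,r')$ with $r':=r\ln^\theta r^{-1}$. The idea is to decompose each trajectory into a finite sequence of visits to $\B(y,r')$, separated by excursions out to an intermediate sphere, apply Lemma~\ref{l_cond_B(y,r)} to each visit, and control the number and matching of visits via Lemma~\ref{l_cond_escape_or_hit}. I fix an intermediate radius $r'':=r\ln^{2\theta}r^{-1}$; hypothesis~\eqref{r_much_smaller_y-1} gives $r''<\min(\tfrac12,|y|-1,(|y|-1)^{1+\delta_0})$, so that $\B(y,r'')\cap\B(1)=\emptyset$, Lemma~\ref{l_cond_B(y,r)} applies to the pair $(r,r'')$, and points on $\partial\B(y,r'')$ meet the hypotheses of Lemma~\ref{l_cond_escape_or_hit}.

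For each of the two processes ($\hW$ conditioned on $\htau(y,r)=\infty$, and $\hW^{y,r}$), I define stopping times $\sigma_1<\eta_1<\sigma_2<\eta_2<\cdots$, where $\sigma_k$ is the $k$-th hitting time of $\partial\B(y,r')$ and $\eta_k$ is the subsequent first hitting time of $\partial\B(y,r'')$, and let $N$ be the (a.s.~finite) number of finite $\sigma_k$'s. The trace in $\B(y,r')$ is then the disjoint union $\bigcup_{k=1}^N\hW_{[\sigma_k,\eta_k]}\cap\B(y,r')$. Each segment starts at $\hW_{\sigma_k}\in\partial\B(y,r')\subset\B(y,r'')\setminus\B(y,r)$, so Lemma~\ref{l_cond_B(y,r)} with outer radius $r''$ couples it with the analogous $\hW^{y,r}$ segment starting at the same entry point, with single-segment TV error $O(r''/(s\ln s))$ where $s=|y|-r''-1$. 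The reduction of the global conditioning $\htau(y,r)=\infty$ to the single-excursion conditioning $\htau(y,r'')<\htau(y,r)$ used in Lemma~\ref{l_cond_B(y,r)} is by the strong Markov property and the fact that the Doob $h$-weight $h(z)=\IP_z[\htau(y,r)=\infty]$ is, by Lemma~\ref{l_cond_escape_or_hit}, approximately constant on $\partial\B(y,r'')$ (up to an error that is already absorbed). Lemma~\ref{l_cond_escape_or_hit} with $|x-y|=r''$ and target $\B(y,r')$ shows that for both processes the conditional probability of returning to $\B(y,r')$ after reaching $\partial\B(y,r'')$ is approximately $1/2$, with relative discrepancy exactly of order $\ln\ln r^{-1}/(\ln r^{-1}+\ln|y|)$; this yields the first error term in~\eqref{eq_cond_full_B(x,r)}, and also the stochastic bound $\IE N=O(1)$.

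The main obstacle is the joint assembly: combining the per-visit couplings with the escape/return coupling into a single coupling of the full trace without multiplicative loss across visits, and verifying that all error pieces (from Lemmas~\ref{l_cond_B(y,r)} and~\ref{l_cond_escape_or_hit}) fit inside the two stated terms. This requires a regime-by-regime analysis ($|y|\downarrow 1$, moderate $|y|$, $|y|\to\infty$) of how $s$, $\ln s$, and $\tell_y$ interact, in order to absorb the accumulated per-visit error $N\cdot O(r''/(s\ln s))$ with $N=O(1)$ into the second target term $r\tell_y\ln^{2\theta}r^{-1}/(|y|-1)$, and similarly to check that the relative error of Lemma~\ref{l_cond_escape_or_hit} is dominated by that same term. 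The conclusion is then by a visit-by-visit coupling on the high-probability event that both processes make matching escape/return decisions, producing a full-trace coupling with total error bounded as in~\eqref{eq_cond_full_B(x,r)}.
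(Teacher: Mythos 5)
Your outline follows the same decomposition as the paper: split the trace in $\B(y,r_1)$ (with $r_1=r\ln^\theta r^{-1}$) into excursions separated by hits of $\partial\B(y,r_2)$ (with $r_2=r\ln^{2\theta}r^{-1}$), couple each excursion via Lemma~\ref{l_cond_B(y,r)}, and observe that the return probability from $\partial\B(y,r_2)$ to $\partial\B(y,r_1)$ is exactly $1/2$ for $\hW^{y,r}$ and approximately $1/2$ for the conditioned $\hW$ by Lemma~\ref{l_cond_escape_or_hit}, producing the $\ln\ln r^{-1}/(\ln r^{-1}+\ln|y|)$ error and a geometric excursion count.

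There is, however, a genuine gap. Couplings of individual excursions and a coupling of the return decision are not enough: when the two processes return from $\partial\B(y,r_2)$ to $\partial\B(y,r_1)$, you also need their \emph{entrance points} on $\partial\B(y,r_1)$ to agree with high probability, since the next excursion is coupled \emph{given} its starting point. You invoke the approximate constancy of $h(z)=\IP_z[\htau(y,r)=\infty]$ on $\partial\B(y,r_2)$ to reduce the global conditioning to per-excursion conditioning; that step is fine, but it only handles the $h$-transform reweighting of return/escape and of what happens inside $\B(y,r_2)$. It says nothing about the law of the re-entry location on $\partial\B(y,r_1)$ after the (conditioned) trajectory has wandered outside $\B(y,r_2)$ and possibly far out towards $\B(1)$. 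Lemma~\ref{l_cond_escape_or_hit}, which you cite for ``matching of visits,'' gives only scalar hitting/escape probabilities, not an entrance distribution.

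This is exactly the content the paper supplies separately: equation~\eqref{entr_B_r1_cond_W} uses Lemma~\ref{l_cond_entrance_m} to show that the entrance law of $\hW^{y,r}$ to $\partial\B(y,r_1)$ from $\partial\B(y,r_2)$ is near-uniform, and then a more delicate limiting computation (introducing the stopping time~$T_R$, the final excursion $\tW$, and the law~$\nu_R$, culminating in~\eqref{calc_with_M}--\eqref{entr_B_r1_cond_hW}) shows the analogous near-uniformity for $\hW$ under the double conditioning. Without this piece, the visit-by-visit coupling you propose cannot be carried out as stated, because there is no way to align the starting points of the second and later excursions. So the overall strategy is the right one, but the proposal misses, and does not even flag as such, the entrance-measure step that is the technical heart of the paper's proof.
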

\begin{proof}
Once again, as in the proof 
of Lemma~\ref{l_cond_B(y,r)}, it is enough
to obtain the proof for the case
$x\in\B(y,\ln^\theta r^{-1})\setminus\B(y,r)$.
 Abbreviate $r_1:= r\ln^\theta r^{-1}$, 
 $r_2:= r\ln^{2\theta} r^{-1}$.
The idea of the proof is the following: first, by Lemma~\ref{l_cond_B(y,r)},
the traces left by the two processes on $\B(y,r_2)$
(and therefore on~$\B(y,r_1)$) up to their first hitting time 
of~$\partial\B(y,r_2)$ have almost the same law. 
Then, both processes may return a few times from~$\partial\B(y,r_2)$
to~$\partial\B(y,r_1)$, and we argue that the number of such returns
has almost the same law (in fact, it is precisely $\text{Geom}_0(1/2)$
for $\hW^{y,r}$); we also argue that the entrance points of these
returns have almost the same law. Then, each pair of additional
excursions from~$\partial\B(y,r_1)$ to~$\partial\B(y,r_2)$
again can be coupled with high probability by Lemma~\ref{l_cond_B(y,r)}.

Now, we fill in the details. First, note that 
(e.g., by~(2.17) of~\cite{CP20})
\begin{equation}
\label{escape_hW_y_r}
 \text{ for any }x\in\partial\B(y,r_2), \qquad 
 \IP_x[\tau^*(y,r_1)<\infty] = 
% \frac{\ln\tfrac{r_1}{r}}{\ln\tfrac{r_2}{r}}
\frac{\ln ({r_1}/{r})}{\ln({r_2}/{r})}
  = \frac{1}{2}.
\end{equation}
For the (additionally) conditioned process~$\hW$,
we have, for $x\in\partial\B(y,r_2)$
\begin{align}
 \lefteqn{
 \IP_x[\htau(y,r_1)<\infty\mid \htau(y,r)=\infty]
 }
 \nonumber\\
 \intertext{%\qquad 
\footnotesize ($\nu$ being the conditional entrance measure
 to $\partial\B(y,r_1)$ from~$x$)}
 &= \frac{\IP_x[\htau(y,r_1)<\infty]\IP_\nu[\htau(y,r)=\infty]}
 {\IP_x[\htau(y,r)=\infty]} \nonumber\\
  \intertext{%\qquad 
\footnotesize (by Lemma~\ref{l_cond_escape_or_hit})}
& = \frac{\frac{\ln r_2^{-1}
  +\ell_y +\ln|y|}{\ln r_1^{-1} +\ell_y +\ln|y|}
  \big(1+O\big(\tfrac{r_2\tell_y}{(|y|-1)
  (\ln r_2^{-1}+\ln|y|)}\big)\big)
  \big(\frac{\ln\frac{r_1}{r}}{\ln r^{-1} +\ell_y +\ln|y|}
  +O\big(\tfrac{r_1\tell_y}{(|y|-1)
  (\ln r_1^{-1}+\ln|y|)}\big)\big)}
  {\frac{\ln\frac{r_2}{r}}{\ln r^{-1} +\ell_y +\ln|y|}
  +O\big(\tfrac{r_2\tell_y}{(|y|-1)
  (\ln r_2^{-1}+\ln|y|)}\big)}
%   \Big(1+O\big(\tfrac{r_2\tell_y}{(|y|-1)
%   (\ln r_1^{-1}+\ln|y|)}\big)\Big)
\nonumber\\  
& = \frac{\frac{\ln r_2^{-1}
  +\ell_y +\ln|y|}{\ln r_1^{-1} +\ell_y +\ln|y|}\cdot
  \frac{\ln\frac{r_1}{r}}{\ln r^{-1} +\ell_y +\ln|y|}}
  {\frac{\ln\frac{r_2}{r}}{\ln r^{-1} +\ell_y +\ln|y|}}
  \Big(1+O\big(\tfrac{r\tell_y\ln^{2\theta}r^{-1}}
  {|y|-1}\big)\Big)
\nonumber\\
\intertext{
\footnotesize (recall that 
$
%\frac{\ln\tfrac{r_1}{r}}{\ln\tfrac{r_2}{r}}
\frac{\ln ({r_1}/{r})}{\ln({r_2}/{r})}
  = \frac{1}{2}$)}
   &= \frac{1}{2}  
\Big(1+O\big(\tfrac{\ln\ln r^{-1}}{\ln r^{-1} + \ln|y|}
 + \tfrac{r\tell_y\ln^{2\theta}r^{-1}}{|y|-1}\big)\Big).
\label{prob_cond_hW_return} 
\end{align}
So, with~\eqref{escape_hW_y_r} 
and~\eqref{prob_cond_hW_return}, we are now able to couple
the excursion counts of the two processes with 
probability close to~$1$.

Next, we need to be able to couple the entrance points to~$\partial\B(y,r_1)$
with high probability. Let~$M$ be a (measurable) subset 
of~$\partial\B(y,r_1)$.
 First, let us note that Lemma~\ref{l_cond_entrance_m}
implies that
\begin{equation}
\label{entr_B_r1_cond_W}
 \IP_u[\hW^{y,r}_{\tau^*(y,r_1)}\in M\mid \tau^*(y,r_1)<\infty]
  = \frac{|M|}{2\pi r_1} 
  \big(1+O\big(\tfrac{1}{\ln^\theta r^{-1}}\big)\big)
\end{equation}
uniformly in $u\in\partial\B(y,r_2)$. Let us obtain an analogue 
of~\eqref{entr_B_r1_cond_W} for~$\hW$ conditioned on $\{\htau(y,r)=\infty\}$.
Lemma~\ref{l_cond_escape_or_hit} implies that
\begin{equation}
\label{escape_R_r_hW}
 \IP_u[\htau(y,r)>\htau(R)] = \frac{2\ln\ln r^{-1}}{\ln r^{-1}+\ell_y+\ln|y|}
 \Big(1+O\big(\tfrac{r\tell_y \ln^\theta r^{-1}}{|y|-1}\big)\Big) + o(1)
\end{equation}
as $R\to\infty$, uniformly in $u\in\partial\B(y,r_1)$.
Assume without loss of generality that $r_1<r_2/2$. 
For~$R$ such that $\B(y,r_2) \subset \B(R)$,
 abbreviate $ G_R = \big\{\tau(y,r_1)<\tau(R)<\tau(1)\big\}$.
Define the (possibly infinite) random variable
\[
 T_R = 
  \begin{cases}
   \inf\big\{t\geq 0: W_t\in\B(y,r_2/2),
   W_{[t,\tau(y,r_1)]}\cap \partial\B(y,r_2) = \emptyset\big\} &
    \text{ on } G_R,\\
    \infty & \text{ on } G_R^\complement
  \end{cases}
\]
to be the time when the last (before hitting~$\partial\B(y,r_1)$) 
excursion between
$\partial\B(y,r_2/2)$ and~$\partial\B(y,r_1)\cup\partial\B(y,r_2)$ starts.
% formally, we also set $W_\sigma=\infty$ on $\{\sigma=\infty\}$.
Note that~$T_R$ is not a stopping time;
and
% it was defined in such a way that 
the law of the excursion
that begins at time~$T_R$ is the law of a Brownian excursion
conditioned to reach~$\partial\B(y,r_1)$
before going to~$\partial\B(y,r_2)$.
We denote that excursion by~$\tW$ (with its initial time
reset to~$0$) and denote by~$\sigma$ the time
it hits~$\partial\B(y,r_1)$.
Let~$\nu_R$ be the joint law of 
the pair $(T_R,W_{T_R})$ under~$\IP_x$.
Abbreviate also $\mathcal{H}=\R_+\times\partial\B(y,r_2/2)$
and observe that $\int_{\mathcal{H}}\,d\nu_R(t,y) = \IP_x[G_R]$.

We can then write for $x'\in\partial\B(y,r_2)$
\begin{align}
\lefteqn{
 \IP_{x'}\big[\hW_{\htau(y,r_1)}\in M, \htau(y,r_1)<\infty, \htau(y,r)=\infty\big]
}\nonumber\\
 &=\lim_{R\to\infty}
  \IP_{x'}\big[\hW_{\htau(y,r_1)}\in M, \htau(y,r_1)<\htau(R)<\htau(y,r)\big]
 \nonumber\\
   \intertext{~
\footnotesize (by Lemma~2.1 of~\cite{CP20})}
 &=\lim_{R\to\infty} 
   \IP_{x'}\big[W_{\tau(y,r_1)}\in M, \tau(y,r_1)<\tau(R)<\tau(y,r)
    \mid \tau(R)<\tau(1)\big]
 \nonumber\\
  &=\lim_{R\to\infty}\frac{\ln R}{\ln|x'|} 
   \IP_{x'}\big[W_{\tau(y,r_1)}\in M, \tau(y,r_1)<\tau(R)
     <\tau(y,r)\wedge\tau(1)\big]
 \nonumber\\
   &=\lim_{R\to\infty}\frac{\ln R}{\ln|x'|} 
   \int_{\mathcal{H}} d\nu_R(t,z)
   \int_{M}\IP_z[\tW_\sigma\in du]
   \IP_u[\tau(R)
     <\tau(y,r)\wedge\tau(1)]
 \nonumber\\     
    \intertext{~
\footnotesize (again by Lemma~2.1 of~\cite{CP20})}
    &=\lim_{R\to\infty}\frac{1}{\ln|x'|} 
   \int_{\mathcal{H}} d\nu_R(t,z)
   \int_{M}\IP_z[\tW_\sigma\in du]
   \IP_u[\htau(y,r)>\htau(R)] \ln|u|.
\label{calc_with_M}
\end{align}
On the other hand, in the same way, one can obtain
\begin{align}
\lefteqn{
 \IP_{x'}\big[\htau(y,r_1)<\infty, \htau(y,r)=\infty\big]
}\nonumber\\
    &=\lim_{R\to\infty}\frac{1}{\ln|x'|} 
   \int_{\mathcal{H}} d\nu_R(t,z)
   \int_{\partial \B(y,r_1)}\IP_z[\tW_\sigma\in du]
   \IP_u[\htau(y,r)>\htau(R)] \ln|u|.
\label{calc_with_partial_B(y,r_1)}
\end{align}
Then, \eqref{diff_logs} implies that 
$\frac{\ln|u|}{\ln|x'|}=\ln|y|+O(r_2/|y|)$
uniformly in $u\in\partial \B(y,r_1)$, and 
Lemma~\ref{l_cond_entrance_m} implies that 
\[
\IP_z[\tW_\sigma\in M]=\int_{M}\IP_z[\tW_\sigma\in du]
=\frac{|M|}{2\pi r_1}\big(1+O\big(\tfrac{1}{\ln^\theta r^{-1}}\big)\big)
\]
for any $z\in\partial\B(y,r_2/2)$.
Using also~\eqref{escape_R_r_hW}
together with~\eqref{calc_with_M}--\eqref{calc_with_partial_B(y,r_1)}, 
we then obtain that 
\begin{equation}
\label{entr_B_r1_cond_hW} 
 \IP_{x'}\big[\hW_{\htau(y,r_1)}\in M \mid \htau(y,r_1)<\infty,
 \htau(y,r)=\infty\big]=\frac{|M|}{2\pi r_1}
  \big(1+O\big(\tfrac{1}{\ln^\theta r^{-1}}
  +\tfrac{r\ln^\theta r^{-1}\tell_y}{|y|-1}\big)\big),
\end{equation}
which is the desired counterpart of~\eqref{entr_B_r1_cond_W}
and permits us to couple the starting points of the 
corresponding excursions with probability
at least $1-O\big(\tfrac{1}{\ln^\theta r^{-1}}
  +\tfrac{r\ln^\theta r^{-1}\tell_y}{|y|-1}\big)$.

% The rest of the proof is straightforward, as outlined
% above.
Using the observation we made
after~\eqref{prob_cond_hW_return}, it is now
straightforward to conclude the proof of
Lemma~\ref{l_cond_full_B(y,r)}
(as outlined at the beginning of the proof).
\end{proof}

\subsection{Controlling the size of the Brownian amoeba}
\label{s_size_amoeba}
Next, we need a result that would permit us to control
the size of
the Brownian amoeba. 
For any $r\geq 1$, define the event
\begin{equation}
\label{df_event_cut_infty} 
 U_r = \big\{\B(1) \text{ is not connected to }\infty
\text{ in } \C\setminus \hW_{[\htau(r),\htau(2r)]} \big\}. 
\end{equation}
\begin{lem}
\label{l_go_around}
There exists $\gamma_0>0$ such that for all $x$ with~$|x|\geq 1$
we have
\begin{equation}
\label{eq_go_around}
\IP_x[U_{|x|}] \geq \gamma_0.
\end{equation}
\end{lem}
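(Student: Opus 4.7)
The plan is to use the skew-product representation from Proposition~\ref{p_represent_hW}. Writing $\hW_t = \exp(Z_{G_t}+iB_{G_t})$ with $Z$ a Bes(3) started at $\ln|x|$ and $B$ an independent one-dimensional Brownian motion, and noting that $\hW_0=x\in\partial\B(|x|)$ means $\htau(|x|)=0$, the interval $[\htau(|x|),\htau(2|x|)]$ corresponds in the time scale $s=G_t$ to $[0,T^*]$, where $T^*=\inf\{s>0:Z_s=\ln|x|+\ln 2\}$. The goal is then to exhibit an event, measurable with respect to $(Z,B)$, whose probability is bounded below uniformly in $|x|\ge 1$ and which implies~$U_{|x|}$.

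First, I would establish a uniform bound $\IP[T^*\ge T_0]\ge c_1>0$ for suitable constants $T_0,c_1$ independent of $|x|\ge 1$. For bounded $\ln|x|$ this is a hitting time of Bes(3) from a bounded starting point, which has a non-degenerate distribution; for large $\ln|x|$ the drift $1/Z$ of $Z$ becomes negligible and $T^*$ is close in law to the Brownian hitting time of the level $\ln 2$, again non-degenerate. Second, by independence of $Z$ and $B$, the event $\{\operatorname{osc}_{[0,T_0]}B\ge \Theta\}$ for a fixed threshold $\Theta$ (say $\Theta=10\pi$) has some positive probability $c_2$ not depending on~$x$. So on a joint event of probability at least $c_1c_2>0$ we simultaneously have $T^*\ge T_0$ and a large angular oscillation of $\hW$ during $[0,T^*]$.

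The main obstacle is the final topological step: showing that such an oscillation forces, with further positive probability, an actual separation of $\B(1)$ from $\infty$ by $\hW_{[0,T^*]}$. For this I would exhibit, with positive probability, a self-intersecting sub-loop of the trajectory with nonzero winding around the origin; such a sub-loop automatically lies in $\C\setminus\B(1)$ and therefore disconnects $\B(1)$ from $\infty$. Concretely, when the continuous lift of $\arg\hW$ has oscillation $\ge 10\pi$, the intermediate value theorem gives many pairs of times $s_0<s_1$ with $\arg\hW_{s_1}-\arg\hW_{s_0}=2\pi$; on the one-parameter random family of such pairs, the continuous quantity $|\hW_{s_1}|-|\hW_{s_0}|$ can be shown to take both signs with positive conditional probability, and thus a zero (giving a genuine self-intersection $\hW_{s_0}=\hW_{s_1}$) can be extracted. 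The extra slack afforded by the threshold $10\pi$ (instead of the minimal $2\pi$) is what allows one to produce both signs. This last argument can be carried out via a support/density argument for the two-dimensional diffusion $\hW$, perhaps after a Girsanov comparison with a standard two-dimensional Brownian motion, exploiting the abundance of self-intersections of planar Brownian paths.
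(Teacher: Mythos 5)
Your overall framework---passing to the skew-product representation $\hW_t=\exp(Z_{G_t}+iB_{G_t})$ and using independence of $Z$ and $B$ to get a uniform lower bound on a ``favorable'' event---is exactly the paper's starting point, and your first two steps (a uniform lower bound on $\IP[T^*\ge T_0]$ and a positive-probability large oscillation for $B$ on $[0,T_0]$) are sound and match the paper's use of the strong Markov property with independence of $Z$ and $B$.

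The gap is precisely where you flag it yourself: the ``final topological step.'' Large angular oscillation of $\theta(s)=B_s$ on $[0,T^*]$ does not by itself force the trace to contain a loop that separates $\B(1)$ from $\infty$; the trajectory could wind many times while moving radially outward and never close up. Your proposed repair---picking pairs $s_0<s_1$ with $\theta(s_1)-\theta(s_0)=2\pi$ and arguing that $|\hW_{s_1}|-|\hW_{s_0}|$ ``takes both signs with positive conditional probability, and thus a zero can be extracted''---is not a proof. The set of such pairs is not an interval in any natural parameter, there is no argument that the sign actually changes (indeed $Z$ is a Bes(3) with upward drift, so a priori $|\hW_{s_1}|>|\hW_{s_0}|$ is favored), and the appeal to a ``support/density argument'' and to ``abundance of self-intersections of planar Brownian paths'' is not turned into a quantitative statement uniform in $|x|\ge 1$.

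The paper avoids all of this by building an explicit event, measurable in $(Z,B)$, that \emph{deterministically} forces the separation. Roughly: it asks that the first crossing of the sub-annulus $\{\tfrac43 r\le|z|\le\tfrac53 r\}$ by $\hW$ be ``narrow'' (angular swing $\le\tfrac13\pi$), that $Z$ then returns to $\ln\tfrac32 r$ before reaching $\ln 2r$, and that during a fixed unit of the Bessel--BM clock afterwards $Z$ stays inside $(\ln\tfrac43 r,\ln\tfrac53 r)$ while $B$ swings by at least $\tfrac83\pi$. On this event the wide sweep inside the narrow sub-annulus must cross the narrow initial tongue, producing a separating loop by a purely deterministic argument; its probability is then bounded below uniformly in $r$ by the strong Markov property and Brownian scaling. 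That is the concrete construction your proposal is missing. If you want to salvage your approach, you would need to replace ``support/density argument'' by something of this flavor: simultaneously control that the angular motion is small during the radial crossing of a thin band and large afterwards while the radial part is pinned, so that a crossing is topologically unavoidable, rather than extracted from a family of candidate self-intersections.
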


\begin{proof}
Let $r:=|x|$.
It is convenient to use Proposition~\ref{p_represent_hW}:
it is enough to consider the trace of the process
$\exp(Z_t+iB_t)$ (where $Z$ is Bes(3) and~$B$ is a
standard Brownian motion independent of~$Z$) before~$Z$
hits $\ln (2r)$. Denote
$\tau^Z(s)=\min\{t\geq 0: e^{Z_t}=s\}$.
Let 
\[
{\tilde \tau} %(\textstyle\frac{3}{2}r) 
 = \min\big\{t> \tau^Z(\textstyle\frac{5}{3}r) 
  : e^{Z_t}
  %\RR_t 
  = \textstyle\frac{3}{2}r\big\}.
\]
Define the event (cf.\ Figure~\ref{f_go_around}; note that what is
written in the first line guarantees that the first crossing of
$\{z\in\C: \textstyle\frac{4}{3}r\leq|z|\leq\textstyle\frac{5}{3}r\}$
is not ``too wide'')
\begin{align*}
 U'_r 
 &= \Big\{{\tilde \tau}%(\textstyle\frac{3}{2}r)
 <\tau^Z(2r), 
 |B_{s}-B_{\tau^Z(\frac{4}{3}r)}|
\leq \textstyle\frac{1}{3}\pi \text{ for all }
s\in [\tau^Z(\frac{4}{3}r), {\tilde \tau}
%(\frac{3}{2}r)
],
\\
& ~~~~~~
|B_{{\tilde \tau}+1}-B_{{\tilde \tau}}|
\geq \textstyle\frac{8}{3}\pi,
% \\
% & ~~~~~~
\textstyle\frac{4}{3}r< e^{Z_s}
< \textstyle\frac{5}{3}r
 \text{ for all }
s\in [{\tilde \tau},
{\tilde \tau}+1]\Big\},  
\end{align*}
and observe that $\IP_x[U_{|x|}]\geq\IP_x[U'_{|x|}]$
(given that $|\hW_0|=r$, on~$U'_r$ the origin is indeed 
disconnected from the infinity). 
\begin{figure}
\begin{center}
\includegraphics[width=0.59\textwidth]{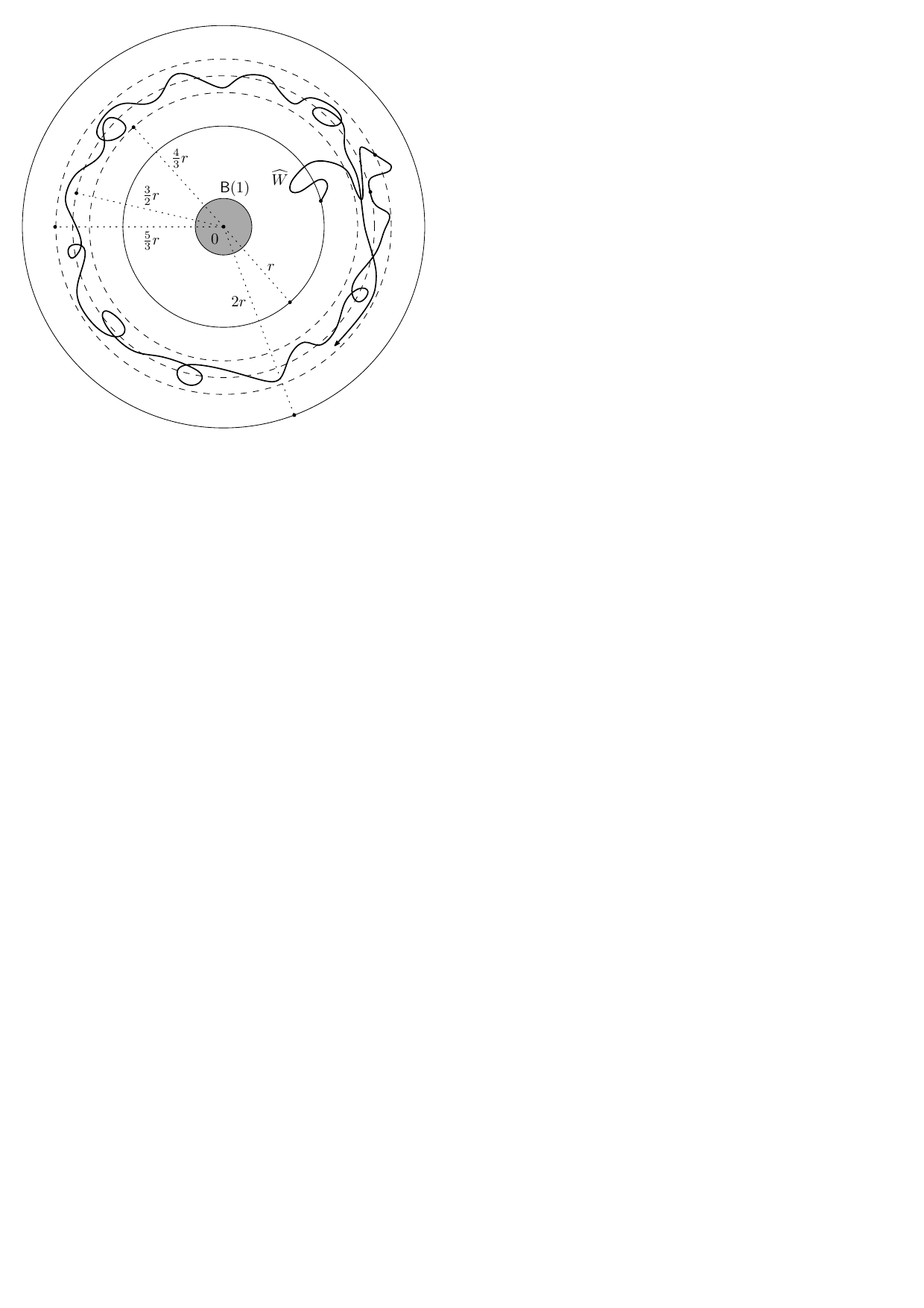}
\caption{On the definition of the event $U'_r$}
\label{f_go_around}
\end{center}
\end{figure}
By independence of~$Z$ and~$B$    
it can be easily seen that, 
for any~$y$ such that $|y|=\frac{4}{3}r$
\[
\IP_y\big[{\tilde \tau}%(\textstyle\frac{3}{2}r)
<\tau^Z(2r), 
 |B_{s}-B_0|
\leq \textstyle\frac{1}{3}\pi \text{ for all }
s\leq {\tilde \tau}%(\frac{3}{2}r)
\big]
\geq \gamma_1,
\]
and for any~$y$ such that $|y|=\frac{3}{2}r$
\[
\IP_y\big[|B_{1}-B_{0}| \geq \textstyle\frac{8}{3}\pi,
\textstyle\frac{4}{3}r< e^{Z_s} 
< \textstyle\frac{5}{3}r
 \text{ for all } s\in [0,1]\big]\geq \gamma'_1,
\]
for some $\gamma_1,\gamma'_1>0$. We conclude the proof
using the (strong) Markov property.
% of~$\hW$. 
\end{proof}
For $A\subset\C$, define $\rad(A)=\sup_{x\in A}|x|$.
The above result immediately implies the following fact 
for the Brownian amoeba~$\am$: 
$\IP[\rad(\am)>2^k]\leq (1-\gamma_0)^k$
for any positive integer~$k$. From this, it is 
straightforward to obtain the following result.
(it is similar to Lemma~3 of~\cite{Wer94}):
\begin{cor}
\label{c_rad_amoeba}
 There is a positive constant $\gamma_2$
such that, for all $u > 1$
\begin{equation}
\label{eq_rad_amoeba}
 \IP[\rad(\am)>u]\leq 2 u^{-\gamma_2}.
\end{equation}
\end{cor}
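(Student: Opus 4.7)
The plan is to first establish the dyadic bound $\IP[\rad(\am) > 2^k] \leq (1-\gamma_0)^k$ for every positive integer~$k$, and then interpolate to general $u > 1$ by a short calculation.

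For the dyadic step, I will focus on just one of the two trajectories of the Wiener moustache, call it~$\hW^{(1)}$, started at the common (uniformly chosen) point of $\partial\B(1)$. Denote its hitting times by $\htau^{(1)}(r)$, and let $U^{(1)}_{2^j}$ be the event of \eqref{df_event_cut_infty} read with $\hW^{(1)}$ in place of $\hW$; that is, the piece $\hW^{(1)}_{[\htau^{(1)}(2^j),\htau^{(1)}(2^{j+1})]}$ disconnects $\B(1)$ from infinity. The key geometric observation is that whenever $U^{(1)}_{2^j}$ occurs with $j\leq k-1$, the trajectory $\hW^{(1)}$ alone already traps the origin inside $\B(2^{j+1})\subseteq \B(2^k)$; since adjoining the second trajectory can only shrink the origin's connected component, this forces $\rad(\am)\leq 2^k$. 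Consequently,
\[
\{\rad(\am) > 2^k\} \subseteq \bigcap_{j=0}^{k-1} (U^{(1)}_{2^j})^c.
\]
Next, I will invoke the strong Markov property at the successive hitting times $\htau^{(1)}(2^j)$: conditionally on the past, the future of $\hW^{(1)}$ is distributed as a conditioned Brownian motion started at some point of $\partial\B(2^j)$, with $2^j\geq 1$, so Lemma~\ref{l_go_around} applies and yields $\IP\big[U^{(1)}_{2^j}\mid\F_{\htau^{(1)}(2^j)}\big]\geq \gamma_0$. Iterating gives $\IP[\rad(\am) > 2^k] \leq (1-\gamma_0)^k$.

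For the interpolation, given $u > 1$, set $k := \lfloor \log_2 u\rfloor \geq 0$, so that $2^k\leq u$ and $k > \log_2 u - 1$. Defining $\gamma_2 := -\log_2(1-\gamma_0) > 0$ (shrinking $\gamma_0$ in Lemma~\ref{l_go_around} a priori, if necessary, so that $\gamma_0\leq 1/2$, which does not affect its validity), I estimate
\[
\IP[\rad(\am) > u] \leq (1-\gamma_0)^k < (1-\gamma_0)^{-1} u^{-\gamma_2} \leq 2 u^{-\gamma_2},
\]
which is \eqref{eq_rad_amoeba}. The only point requiring some care is the geometric claim that occurrence of $U^{(1)}_{2^j}$ confines the origin to $\B(2^{j+1})$; this rests on the fact that a disconnecting arc in the annulus necessarily forms a loop around $\B(1)$, and passing from $\hW^{(1)}$ to $\hW^{(1)}\cup\hW^{(2)}$ only makes the ambient complement smaller. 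The rest is routine strong-Markov bookkeeping together with an elementary logarithm computation, so I do not expect a real obstacle.
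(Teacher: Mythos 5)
Your proof is correct and matches the paper's intended argument: the paper states (without detailed proof) that Lemma~\ref{l_go_around} immediately yields the dyadic bound $\IP[\rad(\am)>2^k]\leq(1-\gamma_0)^k$, and that the corollary then follows ``straightforwardly''; your write-up simply supplies the details of both steps (iterated strong Markov at the hitting times $\htau^{(1)}(2^j)$, using a single branch of the moustache, followed by the elementary interpolation with $\gamma_2=-\log_2(1-\gamma_0)$ after normalizing $\gamma_0\leq 1/2$).
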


\section{Proofs of the main results}
\label{s_proofs}
\begin{proof}[Proof of Theorem~\ref{t_conv_Poisson}]
For $0<a<b<\infty$,
let $\Lambda_{x,\alpha}(a,b)$ be the set of BRI's trajectories
that intersect~$\B(x,b)$ but do not 
intersect~$\B(x,a)$. 
By construction, it holds that
$\Lambda_{x,\alpha}(a,b)$
and $\Lambda_{x,\alpha}(c,d)$ are independent whenever
$(a,b]\cap (c,d] = \emptyset$, and
the number of trajectories in $\Lambda_{x,\alpha}(a,b)$
has Poisson distribution with parameter
$\pi\alpha\big(\hcapa(\B(x,b))-\hcapa(\B(x,a))\big)$.

 For $b>0$, abbreviate
$r_b:= \exp\big(-\frac{2\alpha\ln^2 |x|}{b}\big)$. 
We have \[\IP[\tY_x^{(1)}(\alpha)>s]
=\exp\big(-\pi\alpha\hcapa(\B(x,r_s))\big)\] and,
 for $j\geq 1$ (and with $b=b_1+\cdots+b_j$)
we can write
\begin{align}
\lefteqn{
\IP\big[\tY_x^{(j+1)}(\alpha)>s
  \mid \tY_x^{(1)}(\alpha)=b_1,\ldots,\tY_x^{(j)}(\alpha)=b_j\big]
}
\nonumber\\
 &= \IP\big[\Lambda_{x,\alpha}(r_b,r_{b+s})
  =\emptyset\big]\nonumber\\
 &= \exp\big(-\pi\alpha\big(\hcapa(\B(x,r_{b+s}))-\hcapa(\B(x,r_b))\big)\big).\nonumber
 \label{nobody_in_Lambda}
\end{align}
Next, we use Lemma~\ref{l_cap_smalldisk} to obtain that
\begin{equation}
\label{cap_B_x_r_h}
\pi\alpha \hcapa(\B(x,r_h))
  = h\big(1 + O\big(
  \Psi_1^{(h)} + \Psi_2^{(h)} + \Psi_3^{(h)}
    \big)\big).
\end{equation}
Using this in the above calculations %~\eqref{nobody_in_Lambda}
together with the fact that 
(for bounded~$\Omega$) 
\[e^{-v(1+O(\Omega))}= e^{-v}(1+vO(\Omega)),\]
it is straightforward to
conclude the proof of Theorem~\ref{t_conv_Poisson}.
\end{proof}

\begin{proof}[Proof of Theorem~\ref{t_amoeba}]
For the proof of part~(i), 
denote $\am_1$ the connected component of the origin
formed by the closest trajectory (the one at distance~$\rho_1^\alpha$);
it is clear that $\am_1 \eqlaw \rho_1^\alpha \am$
 (with~$\am$ being a Brownian amoeba
independent of $(\rho^\alpha_k, k\geq 1)$).
In the following, we obtain an upper bound for the 
probability that another trajectory would ``touch'' the 
cell formed by the first trajectory.
We have (recall~\eqref{rho_exponential}) 
$\rho_2^\alpha/\rho_1^\alpha = \exp(Y_2/(2\alpha))$, 
where~$Y_2$ is an Exponential(1) random variable,
independent of~$\rho_1^\alpha$.
Therefore, % using~\eqref{radius_amoeba}, 
we can write
%(recall that here we assume that~$\alpha$ is small)
\begin{align}
 \IP[\cl_0^\alpha = \am_1] &\geq 
 \IP\Big[\rad(\am_1) < \rho_2^\alpha\Big]\nonumber\\
 &= \IP\Big[\rho_1^\alpha\rad(\am) 
 < \rho_2^\alpha\Big]\nonumber\\
 &=\IP\Big[\rad(\am)
  <  \exp\Big(\frac{Y_2}{2\alpha}\Big)\Big]\nonumber\\
&= \IE \Big(\IP\Big[\rad(\am)
  <  \exp\Big(\frac{Y_2}{2\alpha}\Big)\; \Big| 
  \; Y_2\Big]\Big)\nonumber\\
  \intertext{\qquad \qquad \qquad \qquad \qquad 
\footnotesize (by Corollary~\ref{c_rad_amoeba})}
 & \geq 1- 2\IE \exp\Big(-\frac{\gamma_2 Y_2}{2\alpha}\Big)
 \nonumber\\
 &= 1- 2\frac{1}{1+\frac{\gamma_2}{2\alpha}}\nonumber\\
 &= 1- O(\alpha) \qquad \text{ as }\alpha\to 0,
\label{calc_amoeba_included}
\end{align}
and this completes
the proof of the part~(i).

Let us now prove the part~(ii).
Recall the notation $\Lambda_{x,\alpha}(a,b)$ from 
the beginning of the proof
of Theorem~\ref{t_conv_Poisson}.
Considering events in the sequence
$\big(\Lambda_{x,\alpha}(e^{-n},e^{-(n-1)}),n\in\Z\big)$ 
are independent,
let us define the events
\[
 \Upsilon_n 
  = \big\{\Phi_x(\alpha)\in (e^{-n},e^{-(n-1)}]\big\},
\]
and let 
$\sigma_{x,\alpha}=\lfloor -\ln\Phi_x(\alpha)\rfloor +1$,
i.e., $\sigma_{x,\alpha}$ is the only integer~$k$
for which~$\Upsilon_k$ occurs. 
Let $\hW^{x,(n)}$ be the bi-infinite
trajectory defined in the following way:
let $\hW^{x,(n)}_0$ be chosen uniformly
at random on~$\partial\B(x,e^{-(n-1)})$;
then $(\hW^{x,(n)}_t, t<0)$ is distributed
as $\hW^{x,e^{-(n-1)}}$ and 
$(\hW^{x,(n)}_t, t>0)$ is distributed
as $\hW^{x,e^{-n}}$. That is, from a
 random point on~$\partial\B(x,e^{-(n-1)})$
we draw one trajectory conditioned on (immediately) escaping
 from~$\B(x,e^{-(n-1)})$ and another one 
conditioned on never hitting~$\B(x,e^{-n})$.
Then, Lemma~3.9 of~\cite{CP20} implies that
the cell~$\am^{x,(n)}$ 
formed by $\hW^{x,(n)}$ around~$x$
is the standard Brownian amoeba 
rescaled by $\inf_{t>0}\big|\hW^{x,(n)}_t-x\big|$,
and then shifted to~$x$. 

% Let~$\am^{x,r}$ be the amoeba formed by 
% the $\hW^{x,r}$-moustache; note that it has the same
% distribution as $r\am+x$. 
An important observation is that
all $\BRI$'s trajectories are actually
ordered by their $\alpha$-coordinates
(see e.g.\ Remark~2.7 of~\cite{CP20}),
so it may make sense to speak about
the \emph{first} trajectory belonging
to some set of trajectories
(in the case when we are able to find one 
with minimal $\alpha$-coordinate among them).
By Lemma~\ref{l_cond_full_B(y,r)}
(and also assuring that the entrance 
measure of that $\BRI$'s trajectory 
to~$\partial\B(x,e^{-(n-1)})$ is not far from uniform),
we can have a coupling $\hW^{x,(n)}$ with 
the first $\BRI$'s
trajectory (for all $\alpha>0$) that 
intersects~$\B(x,e^{-(n-1)})$ but does not 
intersect~$\B(x,e^{-n})$,
such that the traces of these coincide with high 
probability in the vicinity of~$\B(x,e^{-n})$
(more precisely, in $\B(x,e^{-n}n^\theta)$,
where $\theta:=\max(2,\gamma_2^{-1})$ with~$\gamma_2$
of Corollary~\ref{c_rad_amoeba}).
% {\color{red} I think you should precise that
% you are doing this conditionally on $\Upsilon_n$}
Now, we are interested in proving
that, with high probability,
$\cl_x(\alpha)=\am^{x,(\sigma_{x,\alpha})}$.
Let us define three families of events
\begin{align*}
 G_1^{(n)} &= \big\{\am^{x,(n)} 
     \subset \B(x,e^{-n}n^\theta)\big\},
 \\
  G_2^{(n)} &= \big\{\Phi_x^{(2)}(\alpha) 
    > e^{-n}n^\theta \big\},
 \\
  G_3^{(n)} &= 
\big\{\hW^{x,(n)}_{[-\infty,\infty]}
\cap\B(x,e^{-n}n^\theta)
= \hW^{*,(n)}_{[-\infty,\infty]}
\cap\B(x,e^{-n}n^\theta)\big\},
\end{align*}
where~$\hW^{*,(n)}$ stands 
% {\color{red} (conditionally on $\Upsilon_n$) }
for the first $\BRI$'s
trajectory % of~$\BRI(\alpha)$ 
that 
intersects~$\B(x,e^{-(n-1)})$ but does not 
intersect~$\B(x,e^{-n})$;
% conditioned on~$\{\Phi_x(\alpha)=r\}$
% that touches~$\B(x,r)$; 
as suggested above,
for each~$n$, we assume that it is coupled with the
$\hW^{x,(n)}$ in such a way 
that the probability
that the corresponding traces coincide is maximized.
% {\color{red} Maybe clarify that you speak of the 
% "first" trajectory to lift ambiguity about the choice of 
% the trajectory, but that under $G_2^{(n)} \cap \Upsilon_n$
% it really is the only one touching $B(e^{-(n-1)})$.}

Recall the notation $r_b=\exp\big(-2\alpha b^{-1}\ln^2 |x|\big)$,
so that $\ln r_b^{-1}=2\alpha b^{-1}\ln^2 |x|$.
Also, abbreviate $b_0:=3\ln (\alpha\ln^2|x|)>1$ 
% and $\tY:=\tY_x^{(1)}(\alpha)$, 
% so that $\Phi_x(\alpha)=r_{\tY}$.
% \textbf{(what of this do we need?)}
% Let us also abbreviate
and $n_0=\ln r_{b_0}^{-1}=\frac{2\alpha\ln^2 |x|}
{3\ln (\alpha\ln^2|x|)}$.
Note that Theorem~\ref{t_conv_Poisson} implies that 
\begin{equation}
\label{tY>b0}
 \IP\big[\sigma_{x,\alpha}\leq n_0\big] 
 \leq O\big(\tfrac{1}{(\alpha \ln^2|x|)^3}\big).
\end{equation}
Then, 
Corollary~\ref{c_rad_amoeba} implies that, 
for $n\geq n_0$
\begin{equation}
\label{prob_G1_rb}
 \IP\big[G_1^{(n)}\big] 
 \geq 1-2n^{-\theta \gamma_2} 
 \geq 1-2n^{-1}  
% = 1-\frac{b}{\alpha \ln^2 |x|}
  \geq 1-\frac{3\ln (\alpha\ln^2|x|)}{\alpha \ln^2 |x|}.
\end{equation}
% Next,
% denote
% \[
%  \Psi^*_b := \frac{b}{1-\frac{\theta b}{2\alpha \ln^2 |x|}
%   \ln (2\alpha b^{-1}\ln^2 |x|)}-b
%     = O\big(\tfrac{b^2}{2\alpha \ln^2 |x|}
%      \ln (\alpha b^{-1}\ln^2 |x|)\big)
%     = O\big(\Psi^*_{b_0}\big).
% \]
% Note that, for $n\geq n_0$ we have
% $\Psi^*_b\leq O\big(\tfrac{\ln^3 (\alpha\ln^2|x|)}
% {\alpha\ln^2|x|}\big)$, and (since $\alpha\ln^2|x|$
% has to be large)
% we can suppose that $\Psi^*_b\leq 1$
% without restricting generality.
% Recall that 
% $\{\Phi_x(\alpha)=r_b\}=\big\{\tY_x^{(1)}(\alpha)=b\big\}$,
% and it is also straightforward to obtain that
% $\big\{\Phi_x^{(2)}(\alpha) > r_b\ln^\theta r_b^{-1}\big\}
%   = \big\{\tY_x^{(2)}(\alpha) > \Psi^*_b\big\}$.
% \textbf{(what of this (after ``Next, denote'') do we need?)} 
 
Observe that if~$N$ is a Poisson-distributed
random variable, then an elementary computation implies that
$\IP[N=1 \mid N \geq 1] \geq \IP[N=0]$.
Therefore, quite analogously to the 
proof of Theorem~\ref{t_conv_Poisson},
using~\eqref{cap_B_x_r_h} after some elementary
calculations we can obtain for $n\geq n_0$
(note that $\Psi_1^{(h)}$ and $\Psi_3^{(h)}$
grow in~$h$ and $\Psi_2^{(h)}$ grows in~$h$
at least when $\alpha\ln^2|x|$ is large enough)
\begin{align}
\lefteqn{
 \IP\big[G_2^{(n)}\mid \Upsilon_n\big] 
 }
\nonumber\\ 
 &= \IP\big[\mathop{\rm card}\big(\Lambda_{x,\alpha}
 (e^{-n},e^{-(n-1)})\big)
  = 1 \mid \Upsilon_n\big]
  \IP\big[\Lambda_{x,\alpha}(e^{-(n-1)},e^{-n}n^\theta)
  = \emptyset\big]
    \nonumber\\
&\geq \IP\big[\Lambda_{x,\alpha}(e^{-n},e^{-(n-1)})
  = \emptyset\big]
\IP\big[\Lambda_{x,\alpha}(e^{-(n-1)},e^{-n}n^\theta)
  = \emptyset\big]
  \nonumber\\
  &= \IP\big[\Lambda_{x,\alpha}(e^{-n},e^{-n}n^\theta)
  = \emptyset\big]
    \nonumber\\
%   &\geq \exp\big(-\Psi^*_{b_0} \big)
% \big(1+(b_0+1)O\big(\Psi_1^{(b_0+1)} 
% + \Psi_2^{(b_0+1)} + \Psi_3^{(b_0+1)}\big)\big)
%    \nonumber\\ 
  & \geq 1-O\big(\Psi^* + 
  b_0(H_1+H_2+H_3)
     \big),
\label{prob_G2_rb} 
\end{align}
where
\begin{align*}
 \Psi^* &= \frac{\ln^3 (\alpha\ln^2|x|)}{\alpha\ln^2|x|},\\
 H_1 &= \frac{\tell_x \ln (\alpha\ln^2|x|)}{\alpha\ln^2|x|},\\
 H_2 &= \frac{\alpha\ln^2|x|+\ln|x|\times \ln(\alpha\ln^2|x|)}
  {|x|\exp\big(\frac{2\alpha\ln^2|x|}
  {3\ln(\alpha\ln^2|x|)}\big)\ln|x|},\\
 H_3 &= \frac{1+H_1}{(|x|-1)\exp\big(\frac{2\alpha\ln^2|x|}
  {3\ln(\alpha\ln^2|x|)}\big)}.
\end{align*}
We now intend to simplify the term 
$O(\Psi^* + b_0(H_1+H_2+H_3))$.
First, it can be easily seen that, as $\alpha\ln^2|x|\to\infty$,
the term~$H_2$ is negligible in comparison to~$H_1$.
Then, assuming that~$H_1\leq 1$,
we have
\[
 H_3\leq \frac{2}{(|x|-1)\exp\big(\frac{\alpha\ln^2|x|}  {3\ln(\alpha\ln^2|x|)}\big)}
   \times \exp\Big(-\frac{\alpha\ln^2|x|}
  {3\ln(\alpha\ln^2|x|)}\Big);
\]
since
\begin{equation}
\label{denominator>1} 
 \ln\Big((|x|-1)\exp\big(\frac{\alpha\ln^2|x|}
  {3\ln(\alpha\ln^2|x|)\big)}\Big)
   \geq \frac{\alpha\ln^2|x|}{3\ln(\alpha\ln^2|x|)\big)}
  - |\ln(|x|-1)| \geq 0,
\end{equation}
if 
$
%\frac{\tell_x \ln (\alpha\ln^2|x|)}{\alpha\ln^2|x|}
H_1$
is small enough, we see that~$H_3$ is at most of order of~$H_1$.
Therefore, since both $\Psi^*$ and~$b_0H_1$ are 
bounded from above by 
$\tfrac{\tell_x\ln^3 (\alpha\ln^2|x|)}{\alpha\ln^2|x|}$,
for $n\geq n_0$ we obtain that
\begin{equation}
\label{prob_G2_rb_refined}  
  \IP\big[G_2^{(n)}\mid \Upsilon_n\big] 
   \geq 1 - O\big(\tfrac{\tell_x \ln^3(\alpha\ln^2|x|)}
   {\alpha\ln^2|x|}\big).
\end{equation}

Next, we deal with the event~$G_3^{(n)}$.
To achieve a successful coupling of~$\hW^{x,(n)}$
with the actual $\BRI(\alpha)$ trajectory,
we first have to estimate the probability
that their entrance points 
to~$\partial\B(x,e^{-(n-1)})$ can be 
successfully coupled. Here substituting for the moment $r:=e^{-(n-1)}$
and recall Lemma~3.10 of~\cite{CP20}: the entrance measure
of a $\BRI$'s trajectory to $\B(x,r)$ is 
the harmonic (i.e., uniform) 
measure~$\hm_{\B(x,r)}$ biased by the logarithm 
of the norm of the entrance point
% {\color{blue} It is not so simple! That is true for 
% the unconditioned BRI, but you are now working 
% conditionally on $\Upsilon_n$... In fact, this measure
% you describe must be additionally biased by 
% the probability of not hitting $B(e^{-n})$, which
% is approximately uniform, so things should work out 
% fine in the end.}
(and obviously that, it holds  
$\frac{\ln|y|}{\ln|z|}=(1+O(\frac{r}{|x|\ln|x|}))$ for $y,z\in\partial\B(x,r)$).
Then, the entrance measure of a trajectory
of $\Lambda_{x,\alpha}(r/e,r)$ is the preceding
one additionally biased by the probability
of not hitting $\B(x,r/e)$.
We note that, for $y\in\partial\B(x,r)$,
Lemma~\ref{l_cond_escape_or_hit} gives us that
\[
 \IP_y[\htau(x,r/e)=\infty]
  = \frac{1}{\ln\frac{e}{r}+\ell_x+\ln|x|}
  \Big(1+O\big(\tfrac{r\tell_x}
{(|x|-1)(\ln r^{-1}+\ln|x|)^2}\big)\Big);
\]
this implies that for any 
$y,z\in\partial\B(x,r)$,
\begin{equation}
\label{yz_reach_inside}
 \bigg|\frac{\IP_y[\htau(x,r/e)=\infty]}
 {\IP_z[\htau(x,r/e)=\infty]}-1\bigg|
 \leq O\big(\tfrac{r\tell_x}
{(|x|-1)(\ln r^{-1}+\ln|x|)^2}\big).
\end{equation}
Therefore, the probability of successfully
coupling the entrance points is at least
\begin{equation}
\label{some_Os}
1- O\big(\tfrac{r}{|x|\ln|x|}+
\tfrac{r\tell_x}
{(|x|-1)(\ln r^{-1}+\ln|x|)^2}\big)
 = 1- O\big(\tfrac{r\tell_x}
{(|x|-1)(\ln r^{-1}+\ln|x|)^2}\big).
\end{equation}
Next, to couple the actual traces
%To estimate the (conditional) probability of the 
%event~$G_3^{(r_b)}$ 
with Lemma~\ref{l_cond_full_B(y,r)},
we have to assure that~\eqref{r_much_smaller_y-1} 
is verified for $n\geq n_0$.
It is clear that it holds for large enough $\alpha\ln^2|x|$
in the case $|x|\geq 2$, since it would be enough
to check that\footnote{Observe that the function
$f(x)=x\ln^{2\theta}x^{-1}$ is increasing for 
$x\in(0,e^{-2\theta})$.} 
$r_{b_0}\ln^{2\theta}r_{b_0}^{-1}\leq \frac{1}{2}$,
which is clearly
the case. Assume now that $|x|<2$;
essentially, we need to verify that 
$r_{b_0}\ln^{2\theta}r_{b_0}^{-1}<(|x|-1)^{1+\delta_0}$.
Let $\alpha\ln^2|x|$ be large enough so that
$r_{b_0}\ln^{2\theta}r_{b_0}^{-1}\leq r_{b_0}^{1/2}$.
Then, as in~\eqref{denominator>1}, we have 
\begin{equation}
\label{once_again_the_same}
\ln\frac{(|x|-1)^{1+\delta_0}}{r_{b_0}^{1/2}}
 \geq \frac{\alpha\ln^2|x|}{3\ln(\alpha\ln^2|x|)\big)}
  -(1+\delta_0) \big|\ln(|x|-1)\big| > 0
\end{equation}
if $\frac{\tell_x \ln (\alpha\ln^2|x|)}{\alpha\ln^2|x|}$
is small enough, so~\eqref{r_much_smaller_y-1} 
indeed holds.

Then, using Lemma~\ref{l_cond_full_B(y,r)} (for 
both positive and negative sides of the trajectories,
which are independent except for their starting point),
we obtain for $n\geq n_0$ (recall~\eqref{some_Os})
\begin{equation}
\label{prob_G3_rb}
 \IP\big[G_3^{(n)}\mid \Upsilon_n\big] 
  \geq 1 - O\big(\tfrac{\ln(\alpha b_0^{-1}\ln^2 |x|)}
  {\ln|x|+ \alpha b_0^{-1}\ln^2 |x|}
  + \tfrac{\tell_x r_{b_0}\ln^{2\theta}r_{b_0}^{-1}}{|x|-1} 
  + \tfrac{r\tell_x}
{(|x|-1)(\ln r^{-1}+\ln|x|)^2}\big)
\end{equation}
(again, it is not difficult to see that the second 
term with generic $b\in(0,b_0)$ is majorized 
by the term with~$b_0$, in the case when 
$\alpha\ln^2|x|$ is large enough).
We have that
\[\frac{\ln(\alpha b_0^{-1}\ln^2 |x|)}
  {\ln|x|+ \alpha b_0^{-1}\ln^2 |x|}
\leq %O\big(\frac{\ln^2(\alpha \ln^2 |x|)}{\alpha \ln^2 |x|}\big)
\frac{3\ln^2(\alpha \ln^2 |x|)}{\alpha \ln^2 |x|}.\]
As for the second term, it is clearly of smaller 
order when~$|x|\geq 2$. If~$1<|x|< 2$, 
we have $\tell_x\leq O\big((|x|-1)^{-1}\big)$
so we can write
\[
 \frac{\tell_x r_{b_0}\ln^{2\theta}r_{b_0}^{-1}}{|x|-1}
  \leq O\Big(\frac{r_{b_0}^{1/2}}{(|x|-1)^2}\Big)
  \leq r_{b_0}^{1/4}
    \times O\Big(\frac{r_{b_0}^{1/4}}{(|x|-1)^2}\Big);
\]
analogously to~\eqref{denominator>1} and~\eqref{once_again_the_same},
we can show that $\frac{r_{b_0}^{1/4}}{(|x|-1)^2}\leq 1$
(assuming that
$\frac{\tell_x \ln (\alpha\ln^2|x|)}{\alpha\ln^2|x|}$
is small enough),
which allows us to conclude that
the second term 
is of a smaller order than the first one in that case as well. The third term can be treated quite analogously 
to the second one, so we obtain
\begin{equation}
\label{prob_G3_rb_refined}
 \IP\big[G_3^{(n)}\mid \Upsilon_n\big] 
\geq 1 - O\big(\tfrac{\tell_x \ln^2(\alpha\ln^2|x|)}{\alpha\ln^2|x|}
 \big)
\end{equation}
for $n\geq n_0$.

We can now estimate the probability of the event of interest.
It remains to write
using \eqref{tY>b0}, \eqref{prob_G1_rb}, \eqref{prob_G2_rb_refined},
\eqref{prob_G3_rb_refined}
\begin{align*}
 \IP\big[\cl_x(\alpha)=\am^{x,(\sigma_{x,\alpha})}\big]
 &=
 \IE\big(\IP\big[\cl_x(\alpha)
 =\am^{x,(\sigma_{x,\alpha})}
    \mid \sigma_{x,\alpha} \big]\big)\\
  &\geq  
 \IE\big(\IP\big[\cl_x(\alpha)
  =\am^{x,(\sigma_{x,\alpha})}\mid \sigma_{x,\alpha}\big]
  \1{\sigma_{x,\alpha}\geq n_0}\big)\\
  &\geq \IP\big[\sigma_{x,\alpha}\geq n_0\big] \times
    \min_{n\geq n_0}
 \IP\big[G_1^{(n)}\cap G_2^{(n)} \cap G_3^{(n)}\mid
 \Upsilon_n\big]\\
 &\geq 1 - O\big(\tfrac{\tell_x \ln^3(\alpha\ln^2|x|)}
   {\alpha\ln^2|x|}\big).
\end{align*}
This concludes the proof of Theorem~\ref{t_amoeba}.
\end{proof}

\begin{proof}[Proof of Theorem~\ref{t_central_cell}]
We need to prove that for any $\delta>0$
it holds that
\begin{equation}
\label{central_cell_a.s.} 
(1-2\delta)\sqrt{\frac{\ln \alpha}{2\alpha}}
  \leq M_\alpha \leq
 (1+2\delta)\sqrt{\frac{\ln \alpha}{2\alpha}}
\end{equation}
eventually for all large enough~$\alpha$.
 
 We start by showing that the second inequality 
in~\eqref{central_cell_a.s.}
(with $\delta$ on the place of $2\delta$) 
is verified with high probability.
Let us place $n_{\delta,\alpha}:= 
\big\lceil \frac{6\pi}{\delta}\sqrt{\frac{2\alpha}{\ln\alpha}} \big\rceil$
points $x_1,\ldots, x_{n_{\delta,\alpha}}$
on~$\partial\B(1)$ in such a way that every closed arc 
of length 
$\frac{\delta}{3}\sqrt{\frac{\ln\alpha}{2\alpha}}$
contains at least one of these points.
Then, we have the following inclusion:
\begin{equation}
\label{incl_M_alpha_Phi_x}
 \Big\{M_\alpha>(1+\delta)\sqrt{\frac{\ln\alpha}{2\alpha}}
\Big\}
  \,\subset \,\,
% {\Large \subset}
 \bigcup_{k=1}^{n_{\delta,\alpha}}
   \Big\{\Phi_{x_k}(\alpha)>\Big(1+\frac{\delta}{3}\Big)
   \sqrt{\frac{\ln\alpha}{2\alpha}}\Big\}.
\end{equation}
Next, we recall that Lemma~3.12 of~\cite{CP20}
states that, for $x\in\partial\B(1)$
and small enough~$r>0$,
\begin{equation}
\label{cap_blister}
 \hcapa\big(\B(x,r)\big) = \capa\big(\B(1)\cup\B(x,r)\big)
  = \frac{r^2}{\pi}\big(1+O(r)\big),
\end{equation}
so, by~\eqref{eq_vacant_Bro}
\[
 \IP\Big[\Phi_{x_k}(\alpha)
 >\Big(1+\frac{\delta}{3}\Big)
   \sqrt{\frac{\ln\alpha}{2\alpha}}\Big]
  = \exp\Big(-\frac{(1+\frac{\delta}{3})^2
  \ln \alpha}{2}
  \big(1+o(1)\big)\Big)
  \leq O\big(\alpha^{-\frac{1}{2}
  (1+\frac{2\delta}{3})}
  \big);
\]
by the union bound, this shows that
\begin{equation}
\label{M_alpha>}
 \IP\Big[M_\alpha>(1+\delta)
 \sqrt{\frac{\ln\alpha}{2\alpha}}
\Big] \leq O\big(\alpha^{-\delta/3}\big).
\end{equation}

 Now, we need to obtain an upper-bound
on the probability that~$M_\alpha$ does not 
exceed $(1-\delta)\sqrt{\frac{\ln\alpha}{2\alpha}}$.
Denote $m_{\delta,\alpha}= 
\big\lceil \alpha^{\frac{1}{2}(1-\frac{\delta}{3})}
\big\rceil$, and 
let $y_1,\ldots,y_{m_{\delta,\alpha}}$ be points
placed on~$\partial\B(1)$ with equal spacing 
between neighbouring ones
(so that the distance between the neighbouring ones
is of order~$\alpha^{-\frac{1}{2}(1-\frac{\delta}{3})}
$). Again, by~\eqref{eq_vacant_Bro}
and~\eqref{cap_blister} for large enough~$\alpha$
we have that for any~$k$
\begin{align}
 \IP\Big[\Phi_{y_k}(\alpha)\leq (1-\delta)
   \sqrt{\frac{\ln\alpha}{2\alpha}}\Big]
  &= 1-\exp\Big(-\frac{(1-\delta)^2
  \ln \alpha}{2}
  \big(1+o(1)\big)\Big)
\nonumber\\  
 & \leq 1-\alpha^{-\frac{1}{2}(1-\delta)}.
\label{prob_come_to_yk}
\end{align}

Next, abbreviate 
$A_k=\B\big(y_k,(1-\delta)
\sqrt{\frac{\ln\alpha}{2\alpha}}\big)\setminus\B(1)$,
and consider all the interlacement trajectories
that has an intersection with 
$\bigcup_{k=1}^{m_{\delta,\alpha}}A_k$.
We say that such a trajectory is of type~$1$
if it intersects only one of the $A_k$'s, and 
 of type~$2$ if it intersects several of them.
By construction, the numbers of type-$1$ 
trajectories that intersect~$A_k$ are 
\emph{independent} Poisson random variables
for $k=1,\ldots,m_{\delta,\alpha}$,
and they are also independent of the process 
of type-$2$ trajectories.
Note also that, by~\eqref{cap_blister},
the total number of trajectories (i.e., of both
types) that intersect~$A_k$ is Poisson
with parameter $\frac{1}{2}(1-\delta)^2
  \ln \alpha\times\big(1+o(1)\big)$;
this means that the number of type-$1$ 
trajectories that intersect~$A_k$ is 
% less than
% $\frac{1}{2}(1-\delta)^2
%   \ln \alpha\times\big(1+o(1)\big)$.
dominated by a Poisson random variable
with that parameter.
Therefore, from~\eqref{prob_come_to_yk}
we obtain
\begin{align}
\lefteqn{
 \IP\big[\text{there exists }j\text{ such that 
 no type-$1$ trajectory hits }A_j
 \big] 
  } ~~~~~~~~~~~~~~~~~~~~~~~~
\nonumber\\ 
 &\geq
 1-\big(1-\alpha^{-\frac{1}{2}
 (1-\delta)}\big)^{
 % O\big(\alpha^{-\frac{1}{2}(1-\frac{\delta}{3})}\big)
 m_{\delta,\alpha}}
\nonumber\\ 
 &\geq
% 1-O\big(\alpha^{-\delta/3}\big).
1- \exp(-\alpha^{\delta/3}).
\label{there_is_untouched_A}
\end{align}

Now, we work with the process 
of type-$2$ trajectories. First, we claim that,
for any~$k$ and any $x\in A_k$
\begin{equation}
\label{hit_another_A}
 \IP_x\Big[\hW \text{ hits }
 \bigcup_{\ell\neq k}A_\ell\Big]
 \leq O\big(\alpha^{-\delta/3}\ln\alpha\big).
\end{equation}
To show the above, we use
Proposition~\ref{p_represent_hW} together
with the fact that the Bes(3) process can be 
represented as the norm of the $3$-dimensional
Brownian motion. Assume without restricting
generality that~$k=1$ in~\eqref{hit_another_A},
and that $y_1=1$. Denote 
$r_{\delta,\alpha}=(1-\delta)
 \sqrt{\frac{\ln\alpha}{2\alpha}}$
and let~$A'_j$ be the pre-image of~$A_j$ with 
respect to the exponential map. It is clear that,
for fixed~$\delta$ and large enough~$\alpha$ 
we have 
$A'_j\subset [0,2r_{\delta,\alpha}]\times I_j$,
where~$I_j$ is the interval of length 
$4r_{\delta,\alpha}$ centred at the pre-image 
of~$y_j$. For $j\in\Z$, denote also 
\[
 B'_j = [0,2r_{\delta,\alpha}]
  \times \Big[\frac{2\pi}{m_{\delta,\alpha}}j
  -2r_{\delta,\alpha}, 
  \frac{2\pi}{m_{\delta,\alpha}}j
  +2r_{\delta,\alpha}\Big]
  \subset \R_+\times \R.
\]
Let~$\varphi:\R^4\to \R_+\times \R$ be defined
by $\varphi(x,y,z,t)=(\sqrt{x^2+y^2+z^2},t)$,
and let~$W^{(4)}$ be the standard Brownian motion
in~$\R^4$ started at the origin.
We then have, for $x'\in A'_1$
\begin{align*}
 \IP_{x'}\Big[(Z,B) \text{ hits }
 \bigcup_{2\leq \ell\leq m_{\delta,\alpha}}A'_\ell\Big] 
  &\leq 
\IP_{\varphi^{-1}(x')}\Big[
W^{(4)} \text{ hits }
 \bigcup_{\ell\in \Z\setminus \{0\}}\varphi^{-1}(B'_\ell)\Big].
\end{align*}
Recall that, in $\R^4$, we have 
(momentarily ``lifting'' the notation 
$\B(\cdot,\cdot)$ to that dimension)
$\IP_x[W^{(4)} \text{ hits }\B(y,r)]\asymp
\big(\frac{r}{|x-y|}\big)^2$.
Then, $\varphi^{-1}(B'_\ell)$ is a cylinder
of the linear size of the order
$\sqrt{\frac{\ln\alpha}{\alpha}}$
(so it fits into a ball of the size of the same order),
and the distance between~$\varphi^{-1}(x')$
and $\varphi^{-1}(B'_\ell)$ is of order
$|\ell| \alpha^{-\frac{1}{2}(1-\frac{\delta}{3})}$.
This indeed shows that~\eqref{hit_another_A} holds
(note that $\sqrt{\frac{\ln\alpha}{\alpha}}/
 \alpha^{-\frac{1}{2}(1-\frac{\delta}{3})}
 = \alpha^{-\delta/6}\sqrt{\ln\alpha}$, and 
we also have to use the fact that
$\sum_{\ell\in \Z\setminus \{0\}}|\ell|^{-2}<\infty$).

Now, \eqref{hit_another_A} shows that 
the number of type-$2$ trajectories
that hits a set~$A_k$ has expectation
at most $O\big(\alpha^{-\delta/3}\ln^2\alpha\big)$
(recall that the total number of trajectories
hitting~$A_k$ is Poisson 
with parameter~$O(\ln\alpha)$). Using the fact 
that the process of type-$2$ trajectories is 
independent from all the type-$1$ ones, 
we find that 
if~$\zeta\in\{1,\ldots,m_{\delta,\alpha}\}$ is a (random) index such that no type-$1$ trajectory
touches~$A_{\zeta}$ (by~\eqref{there_is_untouched_A},
such~$\zeta$ exists with probability
at least % $1-O(\alpha^{-\delta/3})$),
$1-\exp(-\alpha^{\delta/3})$), then 
also no type-$2$ 
trajectory touches~$A_{\zeta}$ 
% it
with probability
at least $1-O\big(\alpha^{-\delta/3}\ln^2\alpha\big)$.
% if no type-$1$ trajectories
% touch~$A_{k_0}$, then also no type-$2$ 
% trajectories touch it with probability
% at least $1-O\big(\alpha^{-\delta/3}\ln^2\alpha\big)$.
% Taking~\eqref{there_is_untouched_A} into account,
This means that
\begin{equation}
\label{M_alpha<}
 \IP\Big[M_\alpha<(1-\delta)
 \sqrt{\frac{\ln\alpha}{2\alpha}}
\Big] 
\leq O\big(\alpha^{-\delta/3}\ln^2\alpha\big).
\end{equation}
Together with~\eqref{M_alpha>},
this implies the convergence in probability
in~\eqref{eq_M_central_cell}; now, we will
deduce the a.s.\ convergence using the 
monotonicity of~$M_\alpha$.
Indeed, let $\alpha_k=e^{\sqrt{k}}$. By the  
Borel-Cantelli lemma,
for any fixed $\delta>0$ the estimates~\eqref{M_alpha>}
and~\eqref{M_alpha<} imply that 
\begin{equation}
\label{conv_alpha_k}
(1-\delta)
 \sqrt{\frac{\ln\alpha_k}{2\alpha_k}} \leq 
 M_{\alpha_k} \leq (1+\delta)
 \sqrt{\frac{\ln\alpha_k}{2\alpha_k}}
\end{equation}
for all but finitely many~$k$.
We have that 
$M_{\alpha_{k+1}}\leq M_\alpha\leq M_{\alpha_k}$
for all $\alpha\in [\alpha_k,\alpha_{k+1}]$,
and it is also elementary to verify that
\[
 \frac{\sqrt{\frac{\ln\alpha_{k+1}}{2\alpha_{k+1}}}}{\sqrt{\frac{\ln\alpha_k}{2\alpha_k}}}
 \to 1 \text{ as }k\to\infty.
\]
This allows us to deduce~\eqref{central_cell_a.s.}
 from~\eqref{conv_alpha_k}
and therefore concludes the proof of
Theorem~\ref{t_central_cell}.
\end{proof}

\section*{Acknowledgements}
The authors were partially supported by
CMUP, member of LASI, which is financed by national funds
through FCT --- Funda\c{c}\~ao
para a Ci\^encia e a Tecnologia, I.P., 
under the project with reference UIDB/00144/2020.
The authors thank the anonymous referees for 
their helpful comments and suggestions
(in particular, for observing that one can obtain
the a.s.\ convergence in Theorem~\ref{t_central_cell}
using the monotonicity of~$M_\alpha$).

% Acknowledgments should be added at the end of this section (right before
% the refences section) as a \subsection* (a subsection without a number):
% \subsection*{Acknowledgments} ...

%%% REFERENCES %%%
{\small\bibliography{cimart}}

\begin{thebibliography}{10}

\bibitem{CDarcyP18}
D.~Camargo and S.~Popov.
\newblock A one-dimensional version of the random interlacements.
\newblock {\em Stoch. Proc. Appl.}, 128(8):2750--2778, 2018.

\bibitem{CP24}
O.~Collin and S.~Popov.
\newblock Two-dimensional random interlacements: 0-1 law and the vacant set at criticality.
\newblock {\em Stoch. Proc. Appl.}, 169:104272, 8, 2024.

\bibitem{CP20}
F.~Comets and S.~Popov.
\newblock Two-dimensional {B}rownian random interlacements.
\newblock {\em Potential Anal.}, 53(2):727--771, 2020.

\bibitem{CPV16}
F.~Comets, S.~Popov, and M.~Vachkovskaia.
\newblock Two-dimensional random interlacements and late points for random walks.
\newblock {\em Comm. Math. Phys.}, 343(1):129--164, 2016.

\bibitem{DRS14}
A.~Drewitz, B.~R\'{a}th, and A.~Sapozhnikov.
\newblock {\em An introduction to random interlacements}.
\newblock SpringerBriefs in Mathematics. Springer, Cham, 2014.

\bibitem{HNPS19}
N.~Holden, {\c{S}}.~Nacu, Y.~Peres, and T.~S. Salisbury.
\newblock How round are the complementary components of planar {B}rownian motion?
\newblock {\em Ann. Inst. Henri Poincar\'{e} Probab. Stat.}, 55(2):882--908, 2019.

\bibitem{K93}
J.~F.~C. Kingman.
\newblock {\em Poisson processes}, volume~3 of {\em Oxford Studies in Probability}.
\newblock The Clarendon Press, Oxford University Press, New York, 1993.
\newblock Oxford Science Publications.

\bibitem{LG91}
J.-F. Le~Gall.
\newblock On the connected components of the complement of a two-dimensional {B}rownian path.
\newblock In {\em Random walks, {B}rownian motion, and interacting particle systems}, volume~28 of {\em Progr. Probab.}, pages 323--338. Birkh\"{a}user Boston, Boston, MA, 1991.

\bibitem{LeGall16}
J.-F. Le~Gall.
\newblock {\em Brownian motion, martingales, and stochastic calculus}, volume 274 of {\em Graduate Texts in Mathematics}.
\newblock Springer, [Cham], french edition, 2016.

\bibitem{McK63}
H.~P. McKean, Jr.
\newblock Excursions of a non-singular diffusion.
\newblock {\em Z. Wahrscheinlichkeitstheorie und Verw. Gebiete}, 1:230--239, 1962/63.

\bibitem{MP10}
P.~M\"{o}rters and Y.~Peres.
\newblock {\em Brownian motion}, volume~30 of {\em Cambridge Series in Statistical and Probabilistic Mathematics}.
\newblock Cambridge University Press, Cambridge, 2010.
\newblock With an appendix by Oded Schramm and Wendelin Werner.

\bibitem{M89}
T.~S. Mountford.
\newblock On the asymptotic number of small components created by planar {B}rownian motion.
\newblock {\em Stochastics Stochastics Rep.}, 28(3):177--188, 1989.

\bibitem{P21}
S.~Popov.
\newblock Conditioned two-dimensional simple random walk: {G}reen's function and harmonic measure.
\newblock {\em J. Theoret. Probab.}, 34(1):418--437, 2021.

\bibitem{P21_book}
S.~Popov.
\newblock {\em Two-dimensional random walk---from path counting to random interlacements}.
\newblock Institute of Mathematical Statistics Textbooks. Cambridge University Press, Cambridge, 2021.

\bibitem{S10}
A.-S. Sznitman.
\newblock Vacant set of random interlacements and percolation.
\newblock {\em Ann. of Math.}, 171(3):2039--2087, 2010.

\bibitem{Szn13}
A.-S. Sznitman.
\newblock On scaling limits and {B}rownian interlacements.
\newblock {\em Bull. Braz. Math. Soc. (N.S.)}, 44(4):555--592, 2013.

\bibitem{Wer94}
W.~Werner.
\newblock Sur la forme des composantes connexes du compl\'{e}mentaire de la courbe brownienne plane.
\newblock {\em Probab. Theory Related Fields}, 98(3):307--337, 1994.

\bibitem{Will74}
D.~Williams.
\newblock Path decomposition and continuity of local time for one-dimensional diffusions, {I}.
\newblock {\em Proc. London Math. Soc.}, 28(3):738--768, 1974.

\end{thebibliography}
% Please, do not change the above line and do not insert your references
% into this file.  Instead, insert your references into the cimart.bib file.
% See cimart.bib for further instructions.

\EditInfo{October 15, 2024}{January 30, 2025}{Ivan Kaygorodov}

\end{document}